\documentclass{article}
\usepackage{graphicx} 

\usepackage{amsmath}
\usepackage{amssymb}
\usepackage{amsthm}
\usepackage{graphicx}
\usepackage[colorlinks=true, allcolors=blue]{hyperref}
\usepackage{subcaption}
\usepackage{cite}
\usepackage{enumitem}
\usepackage{algorithm, algorithmic}

\usepackage{longtable}   

\usepackage{adjustbox}

\usepackage{pifont,tabularx,adjustbox,booktabs}

\newcolumntype{R}[2]{%
>{\adjustbox{angle=#1,lap=\width-(#2)}\bgroup}%
    l%
    <{\egroup}%
}

\newcommand*\rot{\multicolumn{1}{R{30}{1em}}}
\newtheorem{theorem}{Theorem}[section]


\usepackage{titlesec}
\usepackage[titletoc]{appendix}
\usepackage{varwidth}

\newtheorem{definition}{Definition}[section]

\newtheorem{lemma}{Lemma}[section]

\newtheorem{remark}{Remark}[section]

\def\D{\displaystyle}				
\def\ol{\overline}
\def\ul{\underline}

\def\pt{$\bullet$ }
\def\st {\fct{s.t.}}
\def\inte {\fct{in}}
\def\cont {\fct{co}}
\def\mi {\fct{mi}}
\def\intl {\fct{inl}}
\def\contl {\fct{col}}
\def\mil {\fct{mil}}

\def\Alg {\fct{}}

\def\Argmin {\fct{argmin}}
\def\Argmax {\fct{argmax}}

\def\best {\fct{best}}

\def\Sc{{\cal S}}

\def\Bc{{\cal B}}
\def\Ac{{\cal A}}

\def\Rz{\mathbb{R}}
\def\Zz{\mathbb{Z}}

\def\fct#1{\mathop{\rm #1}}	
 
\def\x {{\bf X}}

\parindent=0pt
\openup 2pt
\parskip 2ex plus 1pt minus 1pt

\usepackage{tikz}
\usetikzlibrary{matrix,shapes,arrows,positioning,chains}

\begin{document}

\begin{center}

{\Large \bf Machine Learning Algorithms for Improving Exact Classical Solvers in Mixed Integer Continuous Optimization} 

\vspace{0.5cm}

{\large \bf Morteza Kimiaei}
\centerline{\sl Fakult\"at f\"ur Mathematik, Universit\"at Wien}
\centerline{\sl Oskar-Morgenstern-Platz 1, A-1090 Wien, Austria}
\centerline{\sl email: kimiaeim83@univie.ac.at}
\centerline{\sl WWW: \url{http://www.mat.univie.ac.at/~kimiaei}}

\vspace{0.5cm}

{\large \bf Vyacheslav Kungurtsev}
\centerline{\sl Department of Computer Science, Czech Technical University}
\centerline{\sl Karlovo Namesti 13, 121 35 Prague
2, Czech Republic}
\centerline{\sl email: vyacheslav.kungurtsev@fel.cvut.cz}

\vspace{0.5cm}

{\large \bf Brian Olimba}
\centerline{\sl CEO Olicheza Limited}
\centerline{\sl 331 Buruburu Phase 1 Ol Pogoni Road, 76416--00508 Nairobi, Kenya}
\centerline{\sl email: brian.olimba@olicheza.org}

\end{center}

\begin{sloppypar}

\textbf{Abstract.} Integer and mixed-integer nonlinear programming (INLP, MINLP) are central to logistics, energy, and scheduling, but remain computationally challenging. This survey examines how machine learning and reinforcement learning can enhance exact optimization methods—particularly branch-and-bound (BB)—without compromising global optimality. We cover discrete, continuous, and mixed-integer formulations, and highlight applications such as vehicle routing, hydropower planning, and crew scheduling. We introduce a unified BB framework that embeds learning-based strategies into branching, cut selection, node ordering, and parameter control. Classical algorithms are augmented using supervised, imitation, and reinforcement learning models to accelerate convergence while maintaining correctness. We conclude with a taxonomy of learning methods by solver class and learning paradigm, and outline open challenges in generalization, hybridization, and scaling intelligent solvers.
\end{sloppypar}

\clearpage

\tableofcontents

\clearpage

\begin{sloppypar}

\section{Introduction}
\label{sec:intro}

The field of optimization increasingly faces the challenge of solving large, nonlinear, and combinatorial problems with both discrete and continuous decision variables. 
This paper provides a unified perspective on how classical exact optimization algorithms—especially branch-and-bound and its nonlinear extensions—can be systematically enhanced through Machine Learning (ML) and Reinforcement Learning (RL). 
We first review the mathematical foundations of integer and mixed-integer nonlinear programming (INLP and MINLP) to establish a consistent notation and problem taxonomy, before examining how learning-based methods integrate into these exact frameworks.

\subsection{Background and Motivation}

ML and RL have garnered increasing attention in recent years due to their ability to model complex relationships, learn from data, and enhance algorithmic decision-making. These capabilities are particularly promising in the domain of discrete optimization, where classical methods often struggle with scalability~\cite{Belotti2013,Barnhart1998,Floudas2000}. In the context of Integer Programming (IP) and Mixed-Integer Programming (MIP)—core problem classes in Operations Research—ML and RL have emerged as powerful tools for improving the efficiency and scalability of exact solution methods.

\subsection{Problem Setting and Research Question}

Many real-world decision systems involve non-linear relationships among variables—through physics-based constraints, production yields, or network flow dynamics—that cannot be captured by linear models. These features make continuous and mixed-integer non-linear programming (CNLP and MINLP) crucial for accurately modelling domains such as energy generation, chemical process optimization, and robotic motion planning. Understanding how ML can assist these inherently non-linear formulations motivates the central question of this survey: \textit{to what extent can advances developed for linear MILP settings transfer to MINLP solvers while retaining exactness guarantees?}

\subsection{Challenges in Exact (M)INLP Solving}

Solving large-scale (M)IPs remains a fundamental challenge due to their combinatorial complexity. 
The solution space typically grows exponentially with the number of variables and constraints, 
especially when the problem lacks exploitable structure. 
As a result, many instances become computationally intractable, posing significant obstacles in 
applications such as logistics, energy, and scheduling, where large-scale, time-sensitive planning is critical for operational efficiency and cost-effectiveness. As highlighted in a recent invited talk by Rubén Ruiz~\cite{ruiz2025talk}, 
industrial-scale optimization tasks—such as assigning millions of virtual machines to hundreds of 
thousands of servers at Amazon—remain far beyond the capabilities of current exact MILP solvers 
like {\tt Gurobi} or {\tt Xpress}, leading practitioners to rely on pragmatic, near-optimal approaches.

\subsection{Opportunities for Machine Learning Integration}

ML models, particularly when embedded within algorithmic frameworks, can leverage empirical data to learn patterns or decision policies that improve solver performance. These models often exhibit favorable scalability with respect to input size and can inform key algorithmic choices within MI(N)LP solving algorithms—such as branching decisions, node selection, and heuristic design—thereby complementing or accelerating traditional optimization techniques.

Despite their theoretical convergence guarantees, exact MINLP solvers often suffer from exponential branching complexity and inefficient heuristics, especially in real-time or data-rich environments. ML and RL models offer adaptive control strategies that enhance solver performance while preserving correctness, enabling scalable deployment in domains such as autonomous planning, energy optimization, and supply chain scheduling. Local considerations alone are insufficient for identifying global extrema in such nonconvex optimization landscapes~\cite{HorstTuy1996,Pardalos2013}.

\subsection{Scope and Methodology}

This survey focuses on exact optimization frameworks—ILP, MILP, MINLP, and CNLP—that integrate machine learning or reinforcement learning to improve solver components such as branching, node selection, cut management, and parameter tuning. The literature base spans publications from \textbf{2012 to 2025}, covering the emergence of modern learning-for-optimization methods (e.g., \cite{he2014learning}) through the most recent MILP/ML and MINLP surveys (e.g., \cite{scavuzzo2024mlbnb,ejaz2024mip_rl_survey}).  
Sources were identified through systematic searches in \textit{Scopus}, \textit{Web of Science}, and \textit{arXiv} using query terms such as
{\bf ML for MILP}, 
{\bf RL branch-and-bound''}, 
{\bf learning-augmented optimization}, and
{\bf neural networks for MINLP}. Inclusion criteria required that papers  
(i) apply ML or RL within an {\bf exact} optimization framework providing global optimality guarantees,  
(ii) report empirical or theoretical analyses relevant to solver integration, and  
(iii) be peer-reviewed or recognized preprints with publicly available or replicable code and datasets (e.g., \texttt{MIPLIB}~2010~\cite{koch2011miplib}, \texttt{MINLPLib}~\cite{bussieck2003minlplib,minlplib}).  
This protocol ensures coverage from foundational exact optimization work (e.g., \cite{Land2009,Barnhart1998,Floudas2000}) through recent ML-enhanced frameworks (\cite{bengio2021mlforco,zhang2023survey,scavuzzo2024mlbnb}), capturing the field’s evolution over the past three decades.

In this paper, we review recent developments in ML and RL techniques aimed at improving the solution of IP and MIP problems. Our focus is specifically on the integration of learning-based methods within \textbf{exact solution paradigms}—those that provide guarantees of global optimality—as opposed to purely heuristic or approximate methods. We begin by outlining classical solution approaches for (M)IPs and then examine how learning-based strategies are being incorporated into or reshaping these methods. Finally, Sections~\ref{sec:ML} and~\ref{sec:RL} constitute the central focus of this survey. 
They present a detailed analysis of ML and RL enhancements 
to classical {\tt BB} solvers and demonstrate how data-driven decision mechanisms are 
redefining branching, node selection, and parameter control. 
These sections are highlighted throughout the paper to reflect the methodological shift 
from purely algorithmic design toward learning-augmented optimization.

\subsection{Contributions of This Survey}

The main contributions of this survey can be summarized as follows:

\begin{itemize}
    \item \textbf{Comprehensive review of exact MINLP solver components.}  
    We provide a systematic overview of the core components of exact solvers for Integer, Continuous, and Mixed-Integer Nonlinear Programming problems, with a particular focus on the Branch-and-Bound ({\tt BB}) framework and its integration points for data-driven enhancement.

    \item \textbf{Unification of ML and RL strategies within a solver-centric taxonomy.}  
    We propose a unified classification of ML and RL approaches according to the solver component they augment—covering branching, node selection, cut management, parameter tuning, decomposition strategies, and surrogate modelling—alongside the learning paradigm (supervised, imitation, reinforcement).

    \item \textbf{Synthesis of prior work and positioning within the literature.}  
    We review and contrast previous surveys on ML and RL for exact optimization methods, identifying overlaps, differences in scope, and gaps in coverage. Our work situates recent contributions within a broader methodological and application-oriented context.

    \item \textbf{Detailed analysis of ML- and RL-enhanced {\tt BB} methods.}  
    We examine in depth the state of the art in learning-based branching decision prediction, relaxation quality estimation, cut selection, node selection, adaptive search control, and parameter tuning, highlighting both algorithmic design choices and empirical performance trends.

    \item \textbf{Application mapping and practical relevance.}  
    We link the discussed methods to high-impact application domains—such as crew scheduling, vehicle routing, facility location, and hydropower scheduling—demonstrating how learned solver policies can accelerate convergence without compromising global optimality.

    \item \textbf{Identification of challenges and research directions.}  
    We outline open issues in generalization across problem classes, data efficiency in training, hybrid symbolic–ML/RL solver architectures, explainability, and the scaling of intelligent solvers to large, real-world instances.
\end{itemize}

Collectively, these contributions provide both a consolidated entry point for researchers new to the topic and a structured reference for practitioners seeking to integrate ML and RL techniques into exact optimization pipelines.

\section{Background - Integer and Mixed Integer Programming Problems}

While Section~\ref{sec:intro} motivated learning for discrete optimization broadly, we now focus on non-linear problem classes. This section provides the mathematical and algorithmic background necessary for understanding how non-linearities interact with integer constraints in MINLPs and how classical exact methods handle them.

We begin by presenting a comprehensive set of problem formulations that involve integer, continuous, or mixed-integer decisions. Computationally, it can be seen that many of the distinct formulations are equivalent. However, the distinct choices of space, that is, integers, sets, etc., can, together with the form of the functions and expressions, provide insight into the structure facilitating solution methods. 

Unlike with continuous decisions, wherein local first-order variations approximate a gradient flow, and thus a direction of monotonic decrease for a variational formulation of the problem, with discrete problems, local considerations are no longer front and center of consideration for finding extrema. That is not to say they are not important, but that these considerations must be appropriately tuned to the space and context, and carefully combined with considerations more native to combinatorics—counting and global search. 

\subsection{Foundation — Integer Structures}

The fundamental distinction between continuous and discrete optimization, or more broadly, computational research, lies in the underlying solution space. Continuous optimization operates within topological vector spaces and normed functional spaces, where natural notions of continuity, ordering, and satisfaction are derived from the structure of the real numbers, complete fields supporting calculus and gradient-based reasoning. Variational calculus, in particular, exploits these continuous properties to derive optimality conditions via differential information, assuming connected domains and smooth trajectories through feasible regions.

In contrast, discrete mathematics (cf.~\cite{grami2022discrete}) entails a qualitatively different landscape, where continuity is absent and problems must be modeled using finite or countably infinite sets. At the foundation of this divergence lies the classical result of Cantor: the cardinality of the real numbers is $2^{\aleph_0}$, strictly greater than that of the natural numbers $\aleph_0$. This distinction underpins two major mathematical paradigms—those of continuity and those of discrete counting.

Focusing on the latter, discrete optimization problems—including integer and mixed-integer programming—primarily operate over structured subsets of the integers. While discrete mathematics spans diverse objects such as graphs, permutations, and finite sets, the class of problems with integer-valued variables reflects a deep algebraic structure. Integer lattices, in particular, support natural orderings and arithmetic operations that can be exploited algorithmically. These problems are typically defined over $\mathbb{Z}^n$ (or subsets thereof), where the absence of convexity and differentiability challenges the application of standard continuous techniques.

The presence of an ordering is essential for defining optimization: a clear notion of what constitutes more or less of some quantity. In integer spaces, this is often provided by the standard total order on $\mathbb{Z}$ or by partial orders in structured sets like lattices. Optimization then becomes the task of identifying the minimal or maximal element, under an ordering based upon computation of a functional that satisfies given constraints. This viewpoint aligns with classical operations research concerns—maximizing throughput, minimizing cost, or optimizing resource allocation—where objectives and constraints are formulated using integer variables.

Enumeration is a naive but complete method: in finite settings, one could, in principle, evaluate every feasible solution and select the best. However, the combinatorial explosion of possibilities renders this impractical for even moderately sized instances. Integer structure alone, without additional properties such as convex relaxations, duality, or polyhedral theory, provides limited guidance. Therefore, tractability often depends on uncovering deeper problem-specific structure, such as sparsity, symmetry, or integrality conditions, that can be exploited by exact algorithms like branch-and-bound, cutting planes, and decomposition methods.

This structural dependence opens the door to leveraging ML. While decades of research in discrete optimization have produced efficient exact algorithms for many special cases, there remain numerous instances where the underlying structure is opaque or not amenable to purely human-crafted rules. ML can be employed as a statistical inference tool to detect patterns, latent features, or informative decisions (e.g., branching rules, cut selection, or variable ordering) in integer-structured spaces. Crucially, in this work we restrict our attention to the use of ML within the design and enhancement of {\bf exact} algorithms for integer and mixed-integer programming—those that guarantee global optimality and correctness. Unlike heuristic or approximate solvers, which we leave to future work, our focus remains entirely on integrating ML into rigorously verified optimization procedures that are guaranteed to find the global solution.

In this light, integer structure is not limited by the lack of continuous reasoning but also provides a rich algebraic and logical framework within which new algorithmic strategies—classical or learned—can be developed and deployed for provably correct decision-making.

\subsection{Integer Nonlinear Optimization (INLP) problem}

In this section, we introduce the standard INLP problem and its variants. Various algorithms to solve these problems are discussed in Section \ref{sec:giopt}.    

We first define the set of simple bounds 
\begin{equation}\label{e.box}
\x:=\{x\in\Rz^n\mid \ul x\le x \le \ol x\}   \ \ \mbox{with $\ul x,\ol x\in \Rz^n$ ($-\infty\le \ul x_i<\ol x_i \le \infty$ for all $i\in[n]$)}
\end{equation} 
on variables $x\in\Rz^n$ (called the {\bf box}). Then, let $[q]:=\{1, \cdots,q\}$ for any positive integer value $q$ and formulate the INLP problem as
\begin{equation}\label{e.INLP}
\begin{array}{ll}
\min & f(x)\\
\st  & x\in C_{\inte},\\
\end{array}
\end{equation}
where the real-valued (possibly non-convex) objective function $f:C_{\inte} \subseteq \x  \to \Rz$ is defined on the {\bf integer nonlinear feasible set}
\begin{equation}\label{e.intset}
C_{\inte}:=\{x \in \x \mid g(x)=0, \ \ h(x)\le 0, \ \ x_i\in s_i\Zz~~\mbox{for all}~~i\in [n]\}.
\end{equation}
Here, $s_i>0$ is a resolution factor  (typically $s_i=1$, corresponding to standard integer variables), the components of the vectors 
\begin{equation}\label{e.ghdef}
g(x):=\begin{pmatrix}g_1,g_2 \ldots,g_m\end{pmatrix} \ \ \mbox{and} \ \  h(x):=\begin{pmatrix}h_1,h_2 \ldots,h_p\end{pmatrix}
\end{equation}
are
the real-valued (possibly non-convex) constraint functions $g_k:C_{\inte} \subseteq \x \to \Rz$ for $k\in[m]$ and  $h_j:C_{\inte} \subseteq \x \to \Rz$ for all $j\in [p]$. If all the functions $f$, $g_k$ for all $k\in [m]$, and $h_j$ for all $j\in [p]$ are linear, the INLP problem \eqref{e.INLP} reduces to the integer linear programming (ILP) problem
 \begin{equation}\label{e.ILP}
\begin{array}{ll}
\min & c^Tx\\
\st  & x\in C_{\intl}\\
\end{array}
\end{equation}
 with the {\bf integer linear feasible set}
\begin{equation}\label{e.intlset}
C_{\intl}:=\{x \in \x \mid Ax=b, \ \ Bx\le d, \ \ x_i\in s_i\Zz~~\mbox{for all}~~i\in [n]\},
\end{equation}
 where $A\in \Rz^{n\times m}$, $B \in \Rz^{n\times p}$, $b\in\Rz^m$, $d\in \Rz^p$, and $c\in\Rz^n$.

\subsection{Mixed-Integer Nonlinear Programming (MINLP) problem}

When additional continuous decision variables are present, the problem becomes a mixed-integer optimization problem. This class of problems is more challenging and requires careful treatment of the interdependence between continuous and discrete decisions. Classical mixed-integer algorithms to solve these problems are discussed in Section \ref{sec:MINLP}.

We formulate the MINLP problem as
\begin{equation}\label{e.MINLP}
\begin{array}{ll}
\min & f(x)\\
\st  & x\in C_{\mi},\\
\end{array}
\end{equation}
where the real-valued (possibly non-convex) function $f:C_{\mi} \subseteq \x  \to \Rz$ is defined on the {\bf mixed-integer nonlinear feasible set}
\begin{equation}\label{e.miset}
C_{\mi}:=\{x\in\x\mid g(x)=0, \ \ h(x)\le 0, \ \ x_i\in s_i\Zz~~\mbox{for}~~i\in I,\, x_i \in\Rz ~~\mbox{for}~~i\in J\}.
\end{equation}
Here, the box $\x$ comes from \eqref{e.box}, $I$ is a subset of $\{1, \cdots,n\}$, $s_i>0$ is a resolution factor, the components of the vectors (defined by \eqref{e.ghdef}) are the real-valued (possibly non-convex) functions $g_k:C_{\mi} \subseteq \x \to \Rz$ for $k\in[m]$ and  $h_j:C_{\mi} \subseteq \x \to \Rz$ for all $j\in [p]$. We write $x_I$ and $x_{J}$ for the subvectors of $x$ indexed by $I$ and $J$, where $J:=[n]\setminus I$.  If $I=\emptyset$ and $J=[n]$ are chosen for the MINLP problem \eqref{e.MINLP}, then $C_{\mi}$ is reduced to the {\bf continuous nonlinear feasible set}
\begin{equation}
\label{e.contSet}
C_{\cont}:=\{x \in \x \mid g(x)=0, \ \ h(x)\le 0\};
\end{equation}
therefore, the MINLP problem \eqref{e.MINLP} is reduced to the continuous nonlinear programming (CNLP) problem
\begin{equation}\label{e.CNLP}
\begin{array}{ll}
\min & f(x)\\
\st  & x\in C_{\cont}.\\
\end{array}
\end{equation}
If all the functions $f$, $g_k$ for all $k\in[m]$, and $h_j$ for all $j\in [p]$ are linear, the MINLP problem \eqref{e.MINLP} reduces to the  mixed-integer linear programming (MILP) problem
 \begin{equation}\label{e.MILP}
\begin{array}{ll}
\min & c^Tx\\
\st  & x\in C_{\mil}\\
\end{array}
\end{equation}
 with the {\bf mixed-integer linear feasible set}
\begin{equation}\label{e.intlset}
C_{\mil}:=\{x \in \x \mid Ax=b, \ \ Bx\le d, \ \ x_i\in s_i\Zz~~\mbox{for all}~~i\in I\},
\end{equation}
 where $A\in \Rz^{n\times m}$, $B \in \Rz^{n\times p}$, $b\in\Rz^m$, $d\in \Rz^p$, and $c\in\Rz^n$. If $I=\emptyset$ and $J=[n]$ are chosen for the MILP problem \eqref{e.MILP}, then $C_{\mil}$ is reduced to the {\bf continuous linear feasible set}
 \begin{equation}
\label{e.contl}
C_{\contl}:=\{x \in \x \mid Ax=b, \ \ Bx\le d\};
\end{equation}
 hence the MILP problem \eqref{e.MILP} is reduced to the continuous linear programming (CLP) problem
\begin{equation}\label{e.CLP}
\begin{array}{ll}
\min & c^Tx\\
\st  & x\in C_{\contl}.\\
\end{array}
\end{equation}

Several MILP examples are provided in Appendix \ref{app:exampleMIP}, including the Crew Scheduling Problem, Knapsack Problem, Vehicle Routing Problem, Facility Location Problem, Energy Grid Optimization, and Hydropower Scheduling.

\section{Exact Classical Methods for Integer, Continuous, and Mixed-Integer Optimization}\label{sec:giopt}

This section provides a unified framework for solving integer nonlinear programming (INLP), continuous nonlinear programming (CNLP), and mixed-integer nonlinear programming (MINLP) problems using exact classical methods. These methods are primarily based on the branch-and-bound ({\tt BB}) paradigm, which recursively partitions the feasible region into subregions, computes rigorous lower and upper bounds for each subproblem, and prunes regions that cannot contain a global minimizer. To improve efficiency and scalability, the framework integrates enhancements such as cutting planes, column generation, and feasibility pump techniques to strengthen relaxations and accelerate convergence. These enhancements tighten relaxations and can reduce the effective search tree size in practice. We begin with essential definitions and terminology common to all methods, then detail algorithmic components—such as node selection strategies, bounding procedures, branching rules, and active set updates—and finally describe tailored global optimization algorithms for INLP, CNLP, and MINLP, along with theoretical convergence guarantees and links to widely used solvers.

Complete procedural flowcharts, extended algorithmic steps, and supporting proofs of convergence are provided in Appendix~\ref{app:giopt}.

Throughout this section, we refer to three classical enhancement techniques used to strengthen the bounding step of exact {\tt BB} algorithms: {\bf cutting plane} ({\tt CP}), {\bf column generation} ({\tt CG}), and {\bf feasibility pump} ({\tt FP}). 
{\tt CP} methods iteratively refine the relaxed problem by adding valid inequalities (cuts) that eliminate infeasible or suboptimal fractional solutions. 
{\tt CG} techniques dynamically generate variables (columns) to improve dual bounds and handle large-scale formulations compactly. 
{\tt FP} heuristics alternate between feasibility and improvement phases to construct high-quality feasible points that tighten the global upper bound. 
For clarity, we denote the integer and continuous variants of these techniques by {\tt iCP}/{\tt cCP}, {\tt iCG}/{\tt cCG}, and {\tt iFP}/{\tt cFP}, respectively. 
Detailed procedural descriptions and flowcharts of these mechanisms are provided in Appendix~\ref{app:algsteps}.

\subsection{Preliminaries and Terminology}

Before describing {\tt BB} methods for MINLP problems, we introduce terminology commonly used throughout this section:

\begin{itemize}
    \item {\bf Branching:} The act of partitioning the current feasible region into smaller subregions (or nodes) to explore them recursively in a tree structure.
    
    \item {\bf Relaxation:} The process of removing or relaxing some constraints (e.g., integrality) to obtain a simpler problem that provides a bound for the original problem.
    
    \item {\bf Lower Bound:} The optimal value of a relaxed subproblem that underestimates the optimal objective value of the original problem.
    
    \item {\bf Upper Bound:} The best known feasible objective value (e.g., from a feasible integer or continuous solution), which overestimates the global minimum.
    
    \item {\bf Node Selection Strategy:} The rule used to determine which node or subregion to explore next in the {\tt BB} tree.
    
    \item {\bf Bounding:} The procedure for computing lower (and sometimes upper) bounds for subregions to decide whether they can contain a better solution.
    
    \item {\bf Pruning:} The act of discarding a subregion from further consideration based on bounds or infeasibility.
\end{itemize}

\subsubsection{Node Selection Strategies}

There are several {\bf node selection strategies}, such as best-bound search ({\tt BBS}), depth-first search ({\tt DFS}), breadth-first search ({\tt BFS}), and hybrid strategies (illustrated in Figures \ref{f.BBS}–\ref{f.BFS}). Detailed versions of these flowcharts and hybrid-selection schematics are presented in Appendix~\ref{app:flowcharts}.

{\tt BBS} chooses the region with the lowest lower bound from the branching set, which is likely to be close to a global minimum, while {\tt DFS} selects the most recently added region (last-in, first-out) by applying a depth-first search. Hybrid strategies combine the bound-based selection principle of {\tt BBS} with the traversal patterns of {\tt DFS} or {\tt BFS}. For example, a hybrid with {\tt DFS} prioritizes nodes with better bounds but explores each branch thoroughly before backtracking, while a hybrid with {\tt BFS} still selects based on bounds but processes all siblings before delving deeper. This distinction illustrates that hybrid approaches—some heuristic, others learning-based—combine bound-driven selection quality with traversal heuristics to balance exploration and exploitation; see \cite{huang2021branch,he2014learning,mattick2023reinforcement} for detailed discussions.

\subsubsection{Feasible Region}

The initial region $\mathcal{S}_0 := C_{\cont}$ represents the entire continuous relaxation of the original feasible region, ignoring integer constraints. Then, $\mathcal{S}_k$ for $k>0$ is selected in the bounding step, updated in the pruning step, and may be split in the branching step (see Figure \ref{f.FeasReg}). The stepwise evolution of the feasible region is further illustrated in Appendix~\ref{app:flowcharts}.

\subsubsection{Subregions and Branching Set}

During a {\tt BB} algorithm, the feasible region is divided into subregions through a branching process. Each subregion $\mathcal{S}_k \subseteq \mathcal{C}_{\cont}$ represents a portion of the search space to be explored. The branching set is the set $\mathcal{B}$ of all currently active subregions under consideration. Initially, $\mathcal{B} := \{\mathcal{S}_0\}$. Then, $\mathcal{B}$ is updated in the pruning and branching steps (see Figure \ref{f.branchSet}). See Appendix~\ref{app:flowcharts} for a complete depiction of branching-set updates within the {\tt BB} recursion.

\subsubsection{Active Set}

The active set refers to a subset $\mathcal{A}_k \subseteq [n]$ of decision variables considered {\bf active} during optimization in subregion $\mathcal{S}_k$. These variables may correspond to columns in the constraint matrices $g(x) = 0$ and $h(x) \leq 0$ (the vectors $g(x)$ and $h(x)$ are constraint functions defined in \eqref{e.ghdef}), and they are used to define a reduced or restricted problem. The active set may evolve dynamically through column generation, guided by reduced-cost criteria during the bounding step (see Figure \ref{f.activeSet}). An expanded description of active-set identification and updates is included in Appendix~\ref{app:algsteps}.

\subsubsection{Relaxed Subproblem}

For each subregion $\mathcal{S}_k \in \mathcal{B}$, a relaxed subproblem is solved to obtain a local lower bound $L_k$ on the objective function.

\paragraph{INLP bounds.} For each subregion $\mathcal{S}_k \in \mathcal{B}$, the relaxed subproblem of the INLP problem \eqref{e.INLP} defines
\begin{equation}\label{e.Lupdate_NLP}
L_{k}:= \min_{x \in \Sc_{k} \cap C_{\cont}} f(x),
\end{equation}
which solves the optimization over the continuous relaxation. If the solution $x_k^*$ of the problem \eqref{e.Lupdate_NLP} is fractional, a cutting plane is added to the problem \eqref{e.Lupdate_NLP} to exclude it, producing the stronger problem
\begin{equation}\label{e.solrelcut}
L_{k}:= \min_{x \in \Sc_{k} \cap C_{\cont}} f(x) \ \ \st \ \ \alpha^T x > \beta.
\end{equation}
A further refinement restricts the optimization problem \eqref{e.solrelcut} to the variables indexed by the current active set $\mathcal{A}_k$, i.e., solving over $x$ such that $\mathrm{supp}(x) \subseteq \mathcal{A}_k$. The refined problem is
\begin{equation}\label{e.solrelcutAc}
L_k := \min_{\substack{x \in \Sc_k \cap C_{\cont} \\ \mathrm{supp}(x) \subseteq \Ac_k}} f(x) \quad \text{s.t.} \quad \alpha^T x > \beta.
\end{equation}
Here $\mathrm{supp}(x)$ denotes the support of $x$, which is the set of indices where the vector entries are non-zero. 

Figure \ref{f.relaxsub} shows a flowchart for relaxed subproblems. A detailed explanation of the associated relaxation and refinement steps can be found in Appendix~\ref{app:algsteps}.

\paragraph{CNLP case.}
In the CNLP setting, all variables are continuous, and no cuts or active-set restrictions are required. The relaxed subproblem remains \eqref{e.Lupdate_NLP}, and its solution $x_k^*$ directly provides the lower-bound value for the subregion.

\subsubsection{Pricing Problem}

In column generation or active-set updates, the pricing problem 
\begin{equation}\label{e.pricing}
\min_{j\not\in \Ac_k} \; c_j - \pi^T a_j
\end{equation}
identifies new variables (columns) to enter the active set, where $c_j$ is the cost coefficient of variable $x_j$, $\pi$ is the dual vector, and $a_j$ is the corresponding constraint column. Variables with negative reduced cost are added to $\mathcal{A}_{k+1}$.

\subsubsection{Bounds}

The algorithm maintains a global lower bound $L$ and a global upper bound $U$ on the objective function value, which guide convergence and pruning decisions. The definitions of these bounds depend on whether the problem is an INLP or a CNLP.

\paragraph{INLP bounds.}
For INLP, each subregion $\mathcal{S}_k \in \mathcal{B}$ is associated with a local lower bound $L_k$, found by solving a relaxed subproblem (e.g., \eqref{e.Lupdate_NLP}, or the stronger versions \eqref{e.solrelcut} or \eqref{e.solrelcutAc}). This value underestimates the true optimal objective over $\mathcal{S}_k$. The global lower bound is defined as
\begin{equation}\label{e.Lupdate_INLP}
L := \max_{\Sc_k \in \Bc} L_k,
\end{equation}
which represents the best guaranteed lower bound across all active subregions.

The global upper bound is the best-known objective function value from any integer-feasible solution found so far:
\begin{equation}\label{e.Uupdate_INLP}
U := \min\{f(x) \mid x \in \mathcal{C}_{\inte},\ x \text{ is feasible and found during the search} \}.
\end{equation}
It is updated when a new feasible integer point $x_k^*$ with $f(x_k^*) < U$ is found, such as via an integer feasibility pump procedure or at a leaf node of the {\tt BB} tree. If $x_k^*$ is integer-feasible and $f(x_k^*) = L_k$, then
\begin{equation}\label{e.updateU}
U := \min(U, L_k).
\end{equation}
This ensures that $U$ always reflects the best-known feasible value, while $L$ reflects the best proven lower bound.

Note that $\mathcal{S}_k$ does not appear in the definition of the upper bound \eqref{e.Uupdate_INLP} because feasible integer solutions may be discovered independently of any specific subregion, such as through heuristics, rounding, or feasibility pump procedures applied during the search.

Each local bound $L_k$ is defined as a minimum over a subregion in \eqref{e.Lupdate_NLP}, or the stronger versions \eqref{e.solrelcut} or \eqref{e.solrelcutAc}, reflecting the best objective function value attainable within that region under relaxation. The global lower bound $L$ is taken as the maximum over all $L_k$ to ensure a valid bound for the original problem, since the true global minimum must be no less than the best lower bound across all unexplored regions.

\paragraph{CNLP bounds.}
For CNLP, where all variables are continuous and no integer constraints are imposed, the bounds simplify. The global lower bound $L$ is still defined as the maximum of the local lower bounds across all active subregions \(\Sc_k \in \Bc\), where $L_k$ is obtained by solving the continuous subproblem \eqref{e.Lupdate_NLP} over $\mathcal{S}_k$.

The upper bound
\begin{equation}\label{e.Uupdate_CNLP}
U := \min\{f(x) \mid x \in \mathcal{S}_k,\ \mathcal{S}_k \in \mathcal{B},\ x \text{ satisfies all problem constraints} \}
\end{equation}
is defined as the minimum objective function value among all discovered feasible continuous solutions. Here, $x$ must satisfy all original problem constraints (including nonlinear equalities and inequalities), and $U$ is updated whenever a better feasible continuous point is found. This provides a valid upper bound on the global minimum of the CNLP problem. If the solution \(x_k^*\) of \eqref{e.Lupdate_NLP} (or \eqref{e.solrelcutAc}) is feasible with respect to all original problem constraints, then the upper bound \(U\) can be updated as \eqref{e.updateU}.

In both cases, the gap $U - L$ provides a convergence certificate. The algorithm terminates when this gap falls below a predefined tolerance $\epsilon > 0$. Figure~\ref{f.LU} shows a flowchart for updating bounds for INLP and CNLP problems. Additional procedural details on bounding and convergence criteria are summarized in Appendix~\ref{app:proofs}.

\subsection{Exact Classical Global Methods for Integer Optimization}\label{sec:giopt}

This section discusses several state-of-the-art integer {\tt BB} ({\tt IBB}) methods for INLP  problems; for more details, see \cite{Land2009,PadbergRinaldi1991,Barnhart1998,Pecin2014}. Algorithm \ref{a.IBBAlg} is an improved {\tt IBB} algorithm to solve INLP and ILP problems (the detailed pseudocode, convergence proofs, and solver comparisons are consolidated in Appendix~\ref{app:algsteps} and Appendix~\ref{app:solvers}). {\tt IBB} alternately performs steps {\bf bounding} (computing bounds on the objective function for these regions), {\bf pruning} (systematically eliminating regions that cannot contain a global minimizer), {\bf branching} (partitioning the feasible region into smaller subregions), and  {\bf updating bounds} (updating global bounds) until an integer global minimizer is found. 

If the bounding step uses none of {\tt iCP}, {\tt iCG}, and {\tt iFP}, Algorithm \ref{a.IBBAlg} is reduced to  {\bf generic {\tt IBB} algorithm}. If the bounding step is improved by {\tt iCP}, then Algorithm \ref{a.IBBAlg} is reduced to an {\bf integer branch-and-cut algorithm}. If the bounding step is improved by {\tt iCP} and {\tt iCG}, then Algorithm \ref{a.IBBAlg} is reduced to an {\bf integer branch-and-price-and-cut algorithm}.  If the bounding step is improved by {\tt iCP}, {\tt iCG}, and {\tt iFP}, Algorithm \ref{a.IBBAlg} is called an {\bf improved integer branch-and-price-and-cut algorithm}.

{\tt IBB} includes five steps (S0$_{\Alg_{\ref{a.IBBAlg}}}$)-(S4$_{\Alg_{\ref{a.IBBAlg}}}$). We now discuss each step in detail. In the initialization step (S0$_{\Alg_{\ref{a.IBBAlg}}}$), the entire continuous feasible region $\Sc_0:=C_{\cont}$, ignoring integer constraints, the lower bound $L:=-\infty$, and the upper bound $U:=\infty$ are initialized, and the initial active set $\Ac_0\subseteq[n]$ is chosen. Next, the region is branched into subregions in the next step. We denote the set $\Bc$ of {\bf active subregions} (also called the {\bf branching set}) and initialize $\Bc:=\Sc_0$. As long as $\Bc\neq\emptyset$ (all subregions are not eliminated) and $U - L > \epsilon$ (here $0<\epsilon<1$ is a given threshold), (S1$_{\Alg_{\ref{a.IBBAlg}}}$)-(S4$_{\Alg_{\ref{a.IBBAlg}}}$) are performed. 

(S1$_{\Alg_{\ref{a.IBBAlg}}}$) contains five sub-steps (S1a$_{\Alg_{\ref{a.IBBAlg}}}$), (S1b$_{\Alg_{\ref{a.IBBAlg}}}$), (S1c$_{\Alg_{\ref{a.IBBAlg}}}$), (S1d$_{\Alg_{\ref{a.IBBAlg}}}$), and (S1e$_{\Alg_{\ref{a.IBBAlg}}}$). (S1a$_{\Alg_{\ref{a.IBBAlg}}}$). A region $\Sc_{k}\in \Bc$ can be chosen in several ways, such as {\tt BBS}, {\tt DFS}, {\tt BFS}, and hybrid strategies:\\
(S1b$_{\Alg_{\ref{a.IBBAlg}}}$). The solution $x^*_{k}$ of the relaxed problem \eqref{e.Lupdate_NLP} over the continuous relaxed feasible region $\Sc_{k} \cap C_{\cont}$ is found.\\
(S1c$_{\Alg_{\ref{a.IBBAlg}}}$). If $x_k^*$ is not an integer, one or more cutting planes $\alpha^T x > \beta$ may be added to exclude the current fractional solution, where $\alpha$ is the coefficient vector defining the cutting plane and $\beta$ is the threshold defining the position of the cutting plane. These cuts can be generated either simultaneously, based on a set of violated inequalities identified from $x_k^*$, or sequentially, by adding a single cut, resolving the problem, and repeating the process. In this framework, the strategy adopted depends on the problem scale and solver capabilities. Then, the stronger problem 
\eqref{e.solrelcut} is solved to find a new $L_k$.  These cutting planes strengthen the relaxation by eliminating the current fractional solution.  To handle large-scale INLP problems,  the solution $x^*_{k}$ of another stronger optimization problem \eqref{e.solrelcutAc} is found whose variables are restricted to the active set $\Ac_k$.\\
(S1d$_{\Alg_{\ref{a.IBBAlg}}}$). If $x^*_{k}$ is not an integer, {\tt iFP} is performed in a finite number of iterations to find a high-quality solution of the stronger optimization problem \eqref{e.solrelcutAc}.  If {\tt iFP} can find a better integer feasible point, then $x_k^*:=y_{\best}$ and $L_k:=f(y_{\best})$ are updated. \\
(S1e$_{\Alg_{\ref{a.IBBAlg}}}$). To update $\Ac_k$, for each candidate variable $x_j$, construct a column vector $a_j$ representing its contribution to the constraint functions $g(x)$ and $h(x)$ over $C_{\cont}$. Then, solve the corresponding pricing problem \eqref{e.pricing} to determine whether $x_j$ has negative reduced cost and should be added to the new active set $\Ac_{k+1}$. Depending on the problem size and complexity, these pricing problems may be \emph{solved exactly} using optimization solvers or \emph{heuristically} using approximate or greedy methods when an exact solution is computationally infeasible. If new columns with negative reduced cost $c_j - \pi^T a_j < 0$ are identified, they are added to the new $\Ac_{k+1}$. Here $c_j$ is the original cost coefficient of variable $x_j$ and $\pi$ is the dual variable.

In the pruning step (S2$_{\Alg_{\ref{a.IBBAlg}}}$), if $L_k>U$ or $x_k^*$ is infeasible,  $\Sc_{k}$ is pruned and removed from $\Bc$. Otherwise, if $x^*_{k}$ is a feasible integer point, the upper bound  $U$ is updated by \eqref{e.updateU}, $x_{\best}:=x^*_{k}$
is updated,  and the region $\Sc_{k}$ is pruned and removed from $\Bc$ since it satisfies integrality. 

In the branching step (S3$_{\Alg_{\ref{a.IBBAlg}}}$), if $x^*_{k}$ is not integer, the subregion $\Sc_k$ is divided into smaller subregions $\{\Sc_{k_1}, \Sc_{k_2}, \ldots\}$ and then these subregions are added to $\Bc$. In the updating step (S4$_{\Alg_{\ref{a.IBBAlg}}}$), the lower bound \eqref{e.Lupdate_NLP} is updated.

The algorithm terminates when either all subregions are pruned (\(\Bc = \emptyset\)) or the optimality gap \(U - L \leq \epsilon\) is achieved.

\begin{algorithm}[!http]
\caption{{\bf Contemporary {\tt IBB} Framework for Integer Optimization}}\label{a.IBBAlg}
\begin{algorithmic}
\STATE 
\begin{tabular}[l]{|l|}
\hline
\textbf{Initialization:}\\
\hline
\end{tabular}
\vspace{0.1cm} 
(S0$_{\Alg_{\ref{a.IBBAlg}}}$)  Given the threshold $0 < \epsilon < 1$ for the elimination of subregions, start with $\Bc:=\Sc_0:=C_{\cont}$, $L:=-\infty$, $U:=+\infty$, and choose $\Ac_0\subseteq[n]$.
\vspace{0.1cm} 
\REPEAT
\vspace{0.1cm} 
\STATE 
\begin{tabular}[l]{|l|}
\hline
\textbf{Bounding with {\tt iCP}, {\tt iCG}, and {\tt iFP}:}\\
\hline
\end{tabular}
\vspace{0.2cm} 
\STATE (S1a$_{\Alg_{\ref{a.IBBAlg}}}$) Select $\Sc_{k}\in \Bc$ by a node selection strategy.
\STATE (S1b$_{\Alg_{\ref{a.IBBAlg}}}$) Solve the relaxed problem \eqref{e.Lupdate_NLP} to obtain $(x^*_k, L_k)$.\\
\STATE (S1c$_{\Alg_{\ref{a.IBBAlg}}}$) If $x_k^*\not\in \Zz^n$, find the solution $(x^*_{k},L_k)$ of the stronger problem \eqref{e.solrelcutAc}. 
\STATE (S1d$_{\Alg_{\ref{a.IBBAlg}}}$) If $x_k^*\not\in \Zz^n$, {\tt IFP} may find a near-integral solution $x_k^*$.
\STATE (S1e$_{\Alg_{\ref{a.IBBAlg}}}$) Identify $\Ac_{k+1}$ by solving the pricing problem \eqref{e.pricing}.

\vspace{0.2cm} 
\STATE 
\begin{tabular}[l]{|l|}
\hline
\textbf{Pruning:}\\
\hline
\end{tabular}
\vspace{0.1cm} 
(S2$_{\Alg_{\ref{a.IBBAlg}}}$) If $L_k>U$ or  $x_k^*$ is infeasible,  prune $\Sc_{k}$; remove it from $\Bc$. If $x^*_{k}$ is integer, update $U$ by \eqref{e.updateU} and $x_{\best}:=x^*_{k}$, and remove $\Sc_{k}$ from $\Bc$.
\vspace{0.1cm} 
\STATE 
\begin{tabular}[l]{|l|}
\hline
\textbf{Branching:}\\
\hline
\end{tabular}
\vspace{0.2cm} 
(S3$_{\Alg_{\ref{a.IBBAlg}}}$) If $x^*_{k}\not\in \Zz^n$, split $\Sc_k$ into $\{\Sc_{k_1}, \Sc_{k_2}, \ldots\}$ and add them to $\Bc$.
\vspace{0.1cm} 
\STATE 
\begin{tabular}[l]{|l|}
\hline
\textbf{Updating bound:}\\
\hline
\end{tabular}
\vspace{0.1cm} 
(S4$_{\Alg_{\ref{a.IBBAlg}}}$) Update the global lower bound $L$ by \eqref{e.Lupdate_INLP}.
\vspace{0.1cm} 
\UNTIL {($\Bc=\emptyset$ or $U-L\le \epsilon$ is satisfied)}
\end{algorithmic}
\end{algorithm}

\clearpage

\subsection{Exact Classical Global Methods for Continuous Optimization}\label{sec:copt}

A \textbf{continuous} {\tt BB} ({\tt CBB}) framework, analogous to the integer case, can be applied to the CNLP problem \eqref{e.CNLP}. Algorithm \ref{a.CBBAlg} is an improved {\tt CBB} algorithm to solve CNLP problems (algorithmic schematics and supporting flowcharts for continuous bounding and feasibility pumps appear in Appendix~\ref{app:algsteps}).  Until a global minimizer is found, {\tt CBB} alternately performs steps:
\begin{itemize}
\item {\bf Bounding} to compute bounds on the objective function for these regions. 
\item {\bf Pruning} to systematically eliminate regions without a global minimizer.
\item {\bf Branching} to split the feasible region into smaller subregions.
\item {\bf Updating bounds} to update global bounds.
\end{itemize}

If the bounding step uses none of {\tt cCP}, {\tt cCG}, and {\tt cFP}, Algorithm \ref{a.CBBAlg} is reduced to a {\bf generic {\tt IBB} algorithm}. If the bounding step is improved by {\tt cCP}, then Algorithm \ref{a.CBBAlg} is reduced to a {\bf continuous branch-and-cut algorithm}. If the bounding step is improved by {\tt cCP} and {\tt cCG}, then Algorithm \ref{a.CBBAlg} is reduced to a {\bf continuous branch-and-price-and-cut algorithm}.  If the bounding step is improved by {\tt cCP}, {\tt cCG}, and {\tt cFP}, Algorithm \ref{a.CBBAlg} is called an {\bf improved continuous branch-and-price-and-cut algorithm}.

{\tt CBB} includes five steps (S0$_{\Alg_{\ref{a.CBBAlg}}}$)-(S4$_{\Alg_{\ref{a.CBBAlg}}}$). Let us discuss each step in detail. In the initialization step (S0$_{\Alg_{\ref{a.CBBAlg}}}$), the entire continuous feasible region $\Sc_0:=C_{\cont}$, is initially chosen. Then, the lower bound $L:=-\infty$,  the upper bound $U:=\infty$, and the branching set  $\Bc:=\Sc_0$  are initialized. Next, the region is branched into subregions in the next step. As long as $\Bc\neq\emptyset$ (all subregions are not eliminated) and $U - L > \epsilon$ (here $0<\epsilon<1$ is a given threshold), (S1$_{\Alg_{\ref{a.CBBAlg}}}$)-(S4$_{\Alg_{\ref{a.CBBAlg}}}$) are performed. 

(S1$_{\Alg_{\ref{a.CBBAlg}}}$) contains five sub-steps (S1a$_{\Alg_{\ref{a.CBBAlg}}}$), (S1b$_{\Alg_{\ref{a.CBBAlg}}}$), (S1c$_{\Alg_{\ref{a.CBBAlg}}}$), (S1d$_{\Alg_{\ref{a.CBBAlg}}}$), and (S1e$_{\Alg_{\ref{a.CBBAlg}}}$):\\
(S1a$_{\Alg_{\ref{a.CBBAlg}}}$). A region $\Sc_{k}\in \Bc$ can be chosen by one of the node selection strategies {\tt BBS}, {\tt DFS}, {\tt BFS}, or their hybrid variants discussed in  Section \ref{sec:giopt}. \\
(S1b$_{\Alg_{\ref{a.CBBAlg}}}$).  The solution $x^*_{k}$ of the relaxed problem \eqref{e.Lupdate_NLP} over the continuous relaxed feasible region $\Sc_{k} \cap C_{\cont}$ is found.  \\
(S1c$_{\Alg_{\ref{a.CBBAlg}}}$). If \( x_k^* \) is infeasible or provides a weak lower bound, cutting planes \( \alpha^T x > \beta \) may be used to exclude it and tighten the relaxation. Here, $\alpha$ is the coefficient vector defining the cutting plane and $\beta$ is the threshold defining the position of the cutting plane. Then, the stronger problem \eqref{e.solrelcut} is solved to find a new $L_k$.  To handle large-scale CNLP problems,  the solution $x^*_{k}$ of the reduced stronger optimization problem \eqref{e.solrelcutAc} is found whose variables are restricted to the active set $\Ac_k$. \\
(S1d$_{\Alg_{\ref{a.CBBAlg}}}$). If $x^*_{k}$ is infeasible,  {\tt cFP} is performed in a finite number of iterations to find a high-quality solution of the stronger optimization problem \eqref{e.solrelcutAc}.  If {\tt cFP} can find a better feasible point, then $x_k^*:=y_{\best}$ and $L_k:=f(y_{\best})$ are updated.\\  (S1e$_{\Alg_{\ref{a.CBBAlg}}}$). To update $\Ac_k$, for each new column $a_j$ in the column space of $g(x)$ and $h(x)$ in $C_{\cont}$, each pricing problem \eqref{e.pricing}
 is solved. If new columns with the negative reduced cost $c_j - \pi^T a_j < 0$ are identified, they are added to the new $\Ac_{k+1}$. Here $c_j$ is the original cost coefficient of variable $x_j$ and $\pi$ is the dual variable.

In the pruning step (S2$_{\Alg_{\ref{a.CBBAlg}}}$), If $x_k^*$ is a feasible point, $U$ is updated by \eqref{e.updateU},  $x_{\best}:=x^*_{k}$ is updated, and the region $\Sc_{k}$ is pruned and removed from $\Bc$ since a feasible solution has been found within it.

In the branching step (S3$_{\Alg_{\ref{a.CBBAlg}}}$), the subregion $\Sc_k$ is divided into smaller subregions $\{\Sc_{k_1}, \Sc_{k_2}, \ldots\}$ and then they are added to $\Bc$. The subdivision may be based on variable ranges, constraint violations, or curvature properties. In the updating step (S4$_{\Alg_{\ref{a.CBBAlg}}}$), $L$ is updated by \eqref{e.Lupdate_NLP}.

\begin{algorithm}[!http]
\caption{{\bf Contemporary {\tt CBB} Framework for CNLP}}\label{a.CBBAlg}
\begin{algorithmic}
\STATE 
\begin{tabular}[l]{|l|}
\hline
\textbf{Initialization:}\\
\hline
\end{tabular}
\vspace{0.1cm} 
(S0$_{\Alg.\ref{a.CBBAlg}}$)  Given the threshold $0 < \epsilon < 1$ for the elimination of subregions, initialize $\Bc:=\Sc_0:=C_{\cont}$, $L:=-\infty$, $U:=\infty$, and choose $\Ac_0\subseteq[n]$.
\vspace{0.1cm} 
\REPEAT
\vspace{0.1cm} 
\STATE 
\begin{tabular}[l]{|l|}
\hline
\textbf{Bounding with {\tt cCP}, {\tt cCG}, and {\tt cFP}:}\\
\hline
\end{tabular}
\vspace{0.2cm} 
\STATE (S1a$_{\Alg_{\ref{a.CBBAlg}}}$) Select $\Sc_{k}\in \Bc$ by a node selection strategy.
\STATE (S1b$_{\Alg_{\ref{a.CBBAlg}}}$) Solve the solution $(x^*_{k},L_k)$ of the relaxation problem \eqref{e.Lupdate_NLP}.\\
\STATE (S1c$_{\Alg_{\ref{a.CBBAlg}}}$) If $x^*_{k}$ is infeasible,  find a feasible or tighter solution $(x^*_k, L_k)$ of the stronger problem \eqref{e.solrelcutAc}.  
\STATE (S1d$_{\Alg_{\ref{a.CBBAlg}}}$) If $x^*_{k}$ is infeasible,  run {\tt cFP} to find a better feasible solution $x_k^*$.
\STATE (S1e$_{\Alg_{\ref{a.CBBAlg}}}$) Identify $\Ac_{k+1}$ by solving the pricing problem \eqref{e.pricing}.

\vspace{0.2cm} 
\STATE 
\begin{tabular}[l]{|l|}
\hline
\textbf{Pruning:}\\
\hline
\end{tabular}
\vspace{0.1cm} 
(S2$_{\Alg_{\ref{a.CBBAlg}}}$) If $L_k>U$ or  $x_k^*$ is infeasible,  prune $\Sc_{k}$; remove it from $\Bc$. If $x^*_{k}$ is feasible in $C_{\cont}$, update $U$ by \eqref{e.updateU} and $x_{\best}:=x^*_{k}$; remove $\Sc_{k}$ from $\Bc$.
\vspace{0.1cm} 
\STATE 
\begin{tabular}[l]{|l|}
\hline
\textbf{Branching:}\\
\hline
\end{tabular}
\vspace{0.2cm} 
(S3$_{\Alg_{\ref{a.CBBAlg}}}$) Split $\Sc_k$ into $\{\Sc_{k_1}, \Sc_{k_2}, \ldots\}$ and add them to $\Bc$.

\vspace{0.2cm} 
\STATE 
\begin{tabular}[l]{|l|}
\hline
\textbf{Updating bound:}\\
\hline
\end{tabular}
\vspace{0.2cm} 
(S4$_{\Alg_{\ref{a.CBBAlg}}}$) Update the global lower bound $L$ by \eqref{e.Lupdate_INLP}.
\vspace{0.2cm} 
\UNTIL {($\Bc=\emptyset$ or $U-L\le \epsilon$ is satisfied)}
\end{algorithmic}
\end{algorithm}

\clearpage
\subsection{Exact Algorithms for MINLP}\label{sec:MINLP}

MINLP combines the combinatorial complexity of integer variables with the nonconvexity and nonlinear structure of continuous variables. Solving MINLP problems to global optimality poses significant computational challenges due to the interplay between discrete decisions and nonlinear relationships. Exact algorithms for MINLP aim to guarantee global optimality by systematically exploring the feasible region, typically through {\tt BB} frameworks. In this section, we focus on advanced {\tt BB} techniques that exploit the strengths of both continuous and integer optimization methods, leading to hybrid algorithms capable of solving general MINLPs with provable optimality guarantees.

A mixed-integer {\tt BB} algorithm addresses MINLP problems by coordinating the exploration of both continuous and integer variable spaces. Building on the improved continuous {\tt BB} framework {\tt CBB} (Algorithm~\ref{a.CBBAlg}) and the improved integer {\tt BB} framework {\tt IBB} (Algorithm~\ref{a.IBBAlg}), the enhanced mixed-integer branch-and-bound ({\tt MIBB}) algorithm (Algorithm~\ref{a.MIBBAlg}) integrates these two methods. Specifically, {\tt MIBB} uses {\tt CBB} to solve continuous relaxations for bounding and {\tt IBB} to perform branching on integer variables. The algorithm proceeds in three major phases: initialization (S0$_{\Alg_{\ref{a.MIBBAlg}}}$), continuous relaxation and bounding (S1$_{\Alg_{\ref{a.MIBBAlg}}}$), and integer branching (S2$_{\Alg_{\ref{a.MIBBAlg}}}$). Both steps (S1$_{\Alg_{\ref{a.MIBBAlg}}}$) and (S2$_{\Alg_{\ref{a.MIBBAlg}}}$) involve sub-steps for bounding, pruning, branching, and bound updating. The bounding process may be strengthened using cutting planes, column generation, and feasibility pump techniques. Related MINLP solvers are discussed in Appendix~\ref{app:solvers}.

\begin{algorithm}[!ht]
\caption{{\bf Contemporary {\tt MIBB} Framework for MINLP}}\label{a.MIBBAlg}
\begin{algorithmic}
\STATE 
\begin{tabular}[l]{|l|}
\hline
\textbf{Initialization:}\\
\hline
\end{tabular}
\vspace{0.1cm} 
(S0$_{\Alg_{\ref{a.MIBBAlg}}}$) Given the threshold $0 < \epsilon < 1$ for subregion elimination, initialize the branching set $\Bc := \Sc_0 := C_{\cont}$, set the lower bound $L := -\infty$, the upper bound $U := +\infty$, and choose an initial active set $\Ac_0 \subseteq [n]$.
\vspace{0.2cm} 
\REPEAT
\vspace{0.2cm} 
\STATE 
\begin{tabular}[l]{|l|}
\hline
\textbf{Continuous bounding:}\\
\hline
\end{tabular}
\vspace{0.2cm} 
(S1$_{\Alg_{\ref{a.MIBBAlg}}}$) Perform {\tt CBB} to solve continuous relaxations and update lower bounds and feasible solutions.

\vspace{0.2cm} 
\STATE 
\begin{tabular}[l]{|l|}
\hline
\textbf{Integer branching:}\\
\hline
\end{tabular}
\vspace{0.2cm} 
(S2$_{\Alg_{\ref{a.MIBBAlg}}}$) Perform {\tt IBB} to branch on integer variables and refine the search tree.
\vspace{0.2cm} 
\UNTIL {($\Bc = \emptyset$ or $U - L \le \epsilon$ is satisfied)}
\end{algorithmic}
\end{algorithm}

Formal convergence proofs for the {\tt MIBB} framework and performance comparisons across commercial solvers are provided in Appendix~\ref{app:proofs} and Appendix~\ref{app:tables}, respectively.

\section{Previous Surveys on ML for Exact MINLP Methods}

Recent research at the intersection of ML and mathematical optimization has led to the development of learning-augmented solvers for ILP, MILP, CNLP, and MINLP. These methods improve solver efficiency by replacing or enhancing traditional decision heuristics, such as branching, cut selection, and node ordering, with data-driven models. RL has been particularly effective for learning sequential decision-making policies within {\tt BB} frameworks, while supervised learning (SL) and imitation learning (IL) have been employed to guide primal heuristics, feasibility estimation, or constraint generation. In CNLP, policy optimization methods from continuous control have been adapted to preserve feasibility and ensure global convergence. This section surveys recent advances in ML and RL for exact optimization, with a focus on methods that are integrated into solver pipelines and that preserve the correctness, convergence, or optimality guarantees of classical algorithms (see Figure~\ref{f.Ml-RL} and related discussions, below).

\tikzstyle{startstop} = [rectangle, rounded corners, draw=black, fill=blue!10, minimum width=4.8cm, minimum height=1cm, text centered]
\tikzstyle{block} = [rectangle, draw=black, fill=orange!10, minimum width=3cm, minimum height=1cm, text centered]
\tikzstyle{group} = [rectangle, draw=black, fill=green!10, minimum width=4cm, minimum height=1cm, text centered]
\tikzstyle{arrow} = [thick,->,>=stealth]

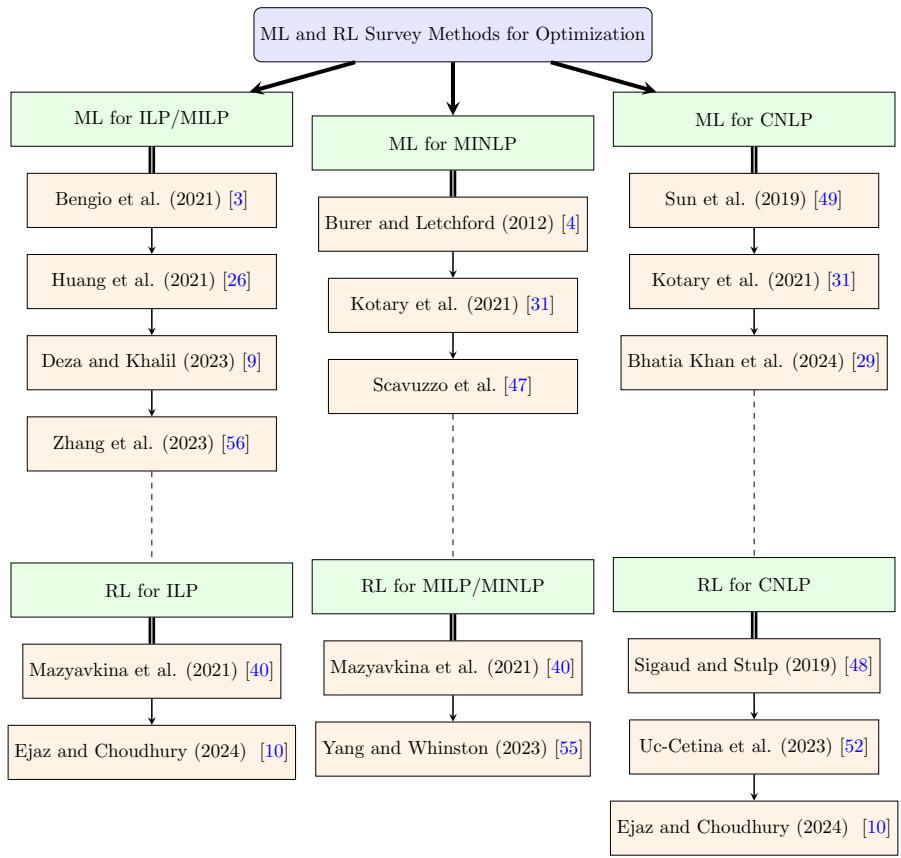
\begin{figure}[ht]
\centering
\scalebox{0.72}{\begin{tikzpicture}[node distance=1.5cm and 3cm]

\tikzstyle{startstop} = [rectangle, rounded corners, draw=black, fill=blue!10, minimum width=5.5cm, minimum height=1cm, text centered]
\tikzstyle{block} = [rectangle, draw=black, fill=orange!10, minimum width=4.6cm, minimum height=1cm, text centered]
\tikzstyle{group} = [rectangle, draw=black, fill=green!10, minimum width=5.2cm, minimum height=1cm, text centered]
\tikzstyle{arrow} = [thick,->,>=stealth]

\node (start) [startstop] {ML and RL Survey Methods for Optimization};

\node (ilpml) [group, below left of=start, xshift=-4.5cm,yshift=-0.5cm] {ML for ILP/MILP};
\node (bengio) [block, below of=ilpml] {Bengio et al. (2021) \cite{bengio2021mlforco}};
\node (huang) [block, below of=bengio] {Huang et al. (2021) \cite{huang2021branch}};
\node (deza) [block, below of=huang] {Deza and Khalil (2023) \cite{deza2023cuttingplanes}};
\node (zhang) [block, below of=deza] {Zhang et al. (2023) \cite{zhang2023survey}};

\node (minlpml) [group, below of=start, yshift=-0.5cm] {ML for MINLP};
\node (burer) [block, below of=minlpml] {Burer and Letchford (2012) \cite{burer2012nonconvex}};
\node (kotary) [block, below of=burer] {Kotary et al. (2021) \cite{kotary2021constrained}};

\node (Scavuzzo) [block, below of=kotary] {Scavuzzo et al.~\cite{scavuzzo2024mlbnb}
};

\node (cnlpml) [group, below right of=start, xshift=4.5cm,yshift=-0.5cm] {ML for CNLP};
\node (sun) [block, below of=cnlpml] {Sun et al. (2019) \cite{sun2019optml}};
\node (kotary2) [block, below of=sun] {Kotary et al. (2021) \cite{kotary2021constrained}};

\node (Bhatia) [block, below of=kotary2] {Bhatia Khan et al. (2024)~\cite{bhatia2024nonlinearopt}};

\node (ilprl) [group, below of=zhang, yshift=-1.2cm] {RL for ILP};
\node (mazyavkina1) [block, below of=ilprl] {Mazyavkina et al. (2021) \cite{RLMazyavkina}};


\node (Ejaz2) [block, below of=mazyavkina1] {Ejaz and Choudhury (2024) ~\cite{ejaz2024mip_rl_survey} };

\node (cnlprl) [group, below of=kotary2, yshift=-4.1cm] {RL for CNLP};
\node (sigaud) [block, below of=cnlprl] {Sigaud and Stulp (2019) \cite{sigaud2019policy}};

\node (Cetina) [block, below of=sigaud] {Uc-Cetina et al. (2023)~\cite{uc2023survey}};

\node (Ejaz1) [block, below of=Cetina] {Ejaz and Choudhury (2024) ~\cite{ejaz2024mip_rl_survey} };

\node (mixrl) [group, below of=kotary, yshift=-3.7cm] {RL for MILP/MINLP};
\node (mazyavkina2) [block, below of=mixrl] {Mazyavkina et al. (2021) \cite{RLMazyavkina}};

\node (Yang) [block, below of=mazyavkina2] {Yang and Whinston (2023)~\cite{yang2023survey}};


\draw [arrow,line width=0.75mm] (start) -- (ilpml);
\draw [arrow,line width=0.75mm] (start) -- (minlpml);
\draw [arrow,line width=0.75mm] (start) -- (cnlpml);

\draw [line width=0.5mm,double, -,double] (ilpml) -- (bengio);
\draw [arrow] (bengio) -- (huang);
\draw [arrow] (huang) -- (deza);
\draw [arrow] (deza) -- (zhang);
\draw [dashed] (zhang) -- (ilprl);

\draw [line width=0.5mm,double, -,double] (minlpml) -- (burer);
\draw [arrow] (burer) -- (kotary);
\draw [dashed] (Scavuzzo) -- (mixrl);
\draw [arrow] (kotary) -- (Scavuzzo);

\draw [line width=0.5mm,double, -,double] (cnlpml) -- (sun);
\draw [arrow] (sun) -- (kotary2);
\draw [dashed] (Bhatia) -- (cnlprl);
\draw [arrow] (kotary2) -- (Bhatia);

\draw [line width=0.5mm,double, -,double] (ilprl) -- (mazyavkina1);
\draw [line width=0.5mm,double, -,double] (cnlprl) -- (sigaud);

\draw [arrow] (sigaud) -- (Cetina);

\draw [arrow] (Cetina) -- (Ejaz1);

\draw [line width=0.5mm,double, -,double] (mixrl) -- (mazyavkina2);
\draw [arrow] (mazyavkina2) -- (Yang);


\draw [arrow] (mazyavkina1) -- (Ejaz2);

\end{tikzpicture}}
\caption{Flowchart Summary of Survey Papers on ML and RL Methods for ILP, MILP, MINLP, and CNLP Problems.}
\label{f.Ml-RL}
\end{figure}

\clearpage

\paragraph{{\bf ML Methods for ILP and MILP Problems.}}  
Deza and Khalil \cite{deza2023cuttingplanes} present a comprehensive survey on ML-guided cut selection in MILPs, categorizing prediction tasks (e.g., ranking, classification) and outlining unique challenges in generalizing these methods to nonlinear relaxations in MINLPs.

Bengio et al.~\cite{bengio2021mlforco} offer a broad methodological tour d’horizon of ML for combinatorial optimization, covering SL, IL, and RL techniques for ILP/MILP solvers. They emphasize the role of graph neural networks (GNNs), policy learning strategies, and training pipelines, and identify core open issues regarding generalization, data efficiency, and the lack of theoretical assurances.

Mazyavkina et al.~\cite{RLMazyavkina} survey RL-based strategies for exact combinatorial optimization, detailing RL policies for branching, cut selection, and node order in ILP/MILP. They highlight empirical solver acceleration, transferability of learned policies, and adaptability to instance distributions. However, the survey also underscores limitations related to data dependence, unstable training dynamics, and limited integration into certified solvers.

Zhang et al. \cite{zhang2023survey} survey the broader landscape of ML for MIP, covering both exact and approximate learning-based strategies across solver components such as branching, cut selection, primal heuristics, and node evaluation. Their work synthesizes recent architectures, including Transformers and GNNs, and emphasizes performance metrics, benchmarking practices, and practical challenges such as solver interpretability and computational overhead.

Huang et al. \cite{huang2021branch} provide an in-depth review focused specifically on ML-enhanced {\tt BB} techniques for MILPs. They categorize ML approaches by target solver decision points (e.g., variable selection, node selection, pruning, and cut generation), survey neural models used to learn branching scores, and highlight the algorithmic implications of learned branching policies. The survey also identifies persistent research gaps in generalization to unseen distributions, data collection for rare-event learning, and maintaining solver completeness with ML-guided decisions.

\paragraph{{\bf ML Methods for MINLP Problems.}}  
Burer and Letchford’s survey \cite{burer2012nonconvex} provides a foundational review of algorithms and software for solving nonconvex MINLPs. It categorizes techniques such as {\tt BB}, branch-and-cut, outer approximation, and hybrid decomposition strategies. Although this work predates most ML integration, it establishes the theoretical framework within which ML-enhanced MINLP solvers operate today.

Kotary et al.~\cite{kotary2021constrained} review end-to-end learning approaches for constrained optimization, including those applicable to MINLPs. Their survey discusses model-based approximations, learned constraint satisfaction, and optimization layers trained via supervised or reinforcement signals. These techniques enable approximate yet feasible solutions in cases where MINLP constraints are only partially known or learned from data. Their review also highlights open questions on generalization, convergence, and robustness of learned surrogates when embedded into deterministic solvers.

Scavuzzo et al.~\cite{scavuzzo2024mlbnb} provide a recent and comprehensive survey on ML‑augmented {\tt BB} frameworks for solving MILPs, with insights relevant to MINLP extensions. Their survey systematically categorizes learning methods applied to core solver components, including branching, cut selection, primal heuristics, node selection, and solver configuration. Particular emphasis is placed on graph‑based representations of MILPs, especially bipartite graph encodings and GNNs, as well as strategies for SL and RL to enhance solver performance. The authors highlight critical integration challenges such as balancing solver interpretability with ML‑driven efficiency, preserving exactness and convergence guarantees, and ensuring robust generalization across heterogeneous problem instances. The survey concludes with open research directions for extending ML‑augmented {\tt BB} methods to nonlinear and mixed‑integer nonlinear problems, bridging classical deterministic solvers with modern data‑driven approaches.

\paragraph{{\bf ML Methods for CNLP Problems.}}  

Sun et al.~\cite{sun2019optml} provide a broad survey of optimization algorithms from a ML perspective, covering first-order, second-order, and derivative-free methods.  Although not explicitly framed as a study of CNLP, many of the optimization techniques reviewed---such as stochastic gradient descent, quasi-Newton methods, and derivative-free search---directly apply to CNLP solvers and share similar convergence and landscape-analysis concerns.

Kotary et al.~\cite{kotary2021constrained} also discuss learning-augmented techniques for CNLP problems. Their survey includes surrogate modelling, differentiable optimization layers, and hybrid policy search strategies used to approximate exact CNLP solvers. These methods preserve feasibility guarantees through learned projections or constraint regularizers, contributing to data-driven CNLP solvers that retain global convergence properties under certain conditions.

Bhatia Khan et al.~\cite{bhatia2024nonlinearopt} provide a comprehensive survey of global and local nonlinear optimization methods, including deterministic algorithms such as {\tt BB}, and stochastic strategies like evolutionary techniques and metaheuristics. While primarily focused on classical CNLP, the review also highlights emerging trends that incorporate ML, such as surrogate modelling, neuro-symbolic optimization, and hybrid policy search. The paper contextualizes ML’s role in CNLP as one of improving convergence behaviour, landscape navigation, and global search effectiveness, particularly in high-dimensional or black-box settings. It serves as a valuable bridge between traditional optimization theory and modern data-driven CNLP strategies.

\paragraph{{\bf RL Methods for ILP and MILP Problems.}}  
Mazyavkina et al.~\cite{RLMazyavkina} present one of the earliest comprehensive surveys on the application of RL to exact combinatorial optimization, focusing on ILP and MILP. They categorize RL methods by the solver decision components these methods target—such as branching, cut selection, and node selection—and discuss how learned policies have outperformed traditional, human-engineered heuristics. The advantages they identify include solver speedups, adaptability, and the ability to learn from experience. However, they also highlight key challenges: the need for extensive solver-generated training data, instability in RL training, and limited theoretical guarantees when RL affects core decisions in exact algorithms.

\paragraph{{\bf RL Methods for CNLP Problems.}}  
Sigaud and Stulp \cite{sigaud2019policy} provide a structured survey of policy search algorithms tailored to continuous-action domains. They contrast several approaches—including policy gradient methods, evolutionary strategies, and Bayesian optimization—evaluating each on criteria such as sample efficiency, convergence behaviour, and learning stability. Although these methods were primarily developed for control tasks, they determine that many could function effectively as exact or near-exact solvers for continuous nonlinear programming problems, offering principled alternatives to classical optimization techniques.

Uc-Cetina et al.~\cite{uc2023survey} present a comprehensive survey of reinforcement learning (RL) methods applied to \emph{natural language processing} (hereafter, \textbf{NLProc}) tasks such as dialogue management, machine translation, and text generation. 
While the survey is not concerned with mathematical programming or continuous optimization per se, the RL algorithms it reviews---including policy-gradient, actor--critic, and reward-shaping techniques---are fundamentally instances of CNLP applied to expected return functions. This methodological overlap provides a useful conceptual bridge between RL advances in NLProc and CNLP: the same gradient-based policy optimization strategies that tune linguistic models are also central to data-driven approaches for nonlinear and mixed-integer nonlinear optimization. 
Hence, although domain-specific, the work of Uc-Cetina et al.\ highlights optimization paradigms that are increasingly shared across machine learning and mathematical programming research.

\paragraph{{\bf RL Methods for MILP and MINLP Problems.}}  
Mazyavkina et al.~\cite{RLMazyavkina} present a comprehensive survey on RL applications to exact combinatorial optimization, focusing on ILP and MILP problems. They categorize RL methods by solver decision components (e.g., branching, cut selection, node selection, primal heuristics) and discuss how learned policies have outperformed human-engineered heuristics. The survey highlights benefits like solver speedups, adaptability, and experience-driven learning, but also emphasizes challenges such as the requirement for extensive solver-generated training data, instability in RL training, and limited theoretical guarantees when core solver components are learned.

Yang and Whinston~\cite{yang2023survey} provide a detailed survey of RL methods for classical combinatorial optimization problems such as the traveling salesperson and quadratic assignment problems.  While their focus is on discrete optimization rather than solver integration, the RL frameworks they analyze---including Q-learning, actor--critic, and deep RL architectures---are directly transferable to decision processes within MILP and MINLP formulations.

\paragraph{{\bf RL Methods for LP, ILP, MILP Problems.}}

Ejaz and Choudhury~\cite{ejaz2024mip_rl_survey} review the use of RL and ML for resource allocation and scheduling problems in 5G and beyond networks, many of which are formulated as LP, ILP, or MILP models.  Their survey emphasizes domain-specific adaptations of RL to guide heuristic and approximate optimization, offering insight into how learning-based strategies can complement exact solver logic in large-scale, dynamic environments.

A structured overview of key survey papers in this area is provided in Table~\ref{tab:survey_summary_condensed}, which organizes the literature by focus area and problem type. Together, these surveys offer a comprehensive foundation for understanding how ML and RL techniques are being integrated into exact optimization pipelines. They span a diverse set of solver components and problem classes, and collectively highlight both the promise of learning-augmented methods and the challenges of preserving correctness, generalization, and scalability in real-world settings.

\begin{table}[!http]
\centering
\caption{Summary of Surveyed ML and RL Methods for Exact Optimization}
\label{tab:survey_summary_condensed}
\scalebox{0.9}{\begin{tabular}{|p{4.5cm}|p{5.1cm}|p{2.4cm}|}
\hline
\textbf{Survey} & \textbf{Focus Area} & \textbf{Problem Type} \\ \hline
Deza \& Khalil (2023)~\cite{deza2023cuttingplanes} & ML-guided cut selection & MILP \\ \hline
Bengio et al. (2021)~\cite{bengio2021mlforco} & ML for combinatorial optimization & ILP, MILP \\ \hline
Zhang et al. (2023)~\cite{zhang2023survey} & ML for MIP solver components & MIP \\ \hline
Huang et al. (2021)~\cite{huang2021branch} & ML-enhanced BB for MILP & MILP \\ \hline
Burer \& Letchford (2012)~\cite{burer2012nonconvex} & Classical nonconvex MINLP solvers & MINLP \\ \hline
Kotary et al. (2021)~\cite{kotary2021constrained} & End-to-end learning for constrained optimization & MINLP, CNLP \\ \hline
Scavuzzo et al. (2024)~\cite{scavuzzo2024mlbnb} & ML-augmented BB frameworks & MILP, MINLP \\ \hline
Sun et al. (2019)~\cite{sun2019optml} & Optimization methods in ML & CNLP \\ \hline
Bhatia Khan et al. (2024)~\cite{bhatia2024nonlinearopt} & Classical and ML-enhanced CNLP methods & CNLP \\ \hline
Sigaud \& Stulp (2019)~\cite{sigaud2019policy} & Policy search algorithms for continuous control & CNLP \\ \hline
Mazyavkina et al. (2021)~\cite{RLMazyavkina} & RL strategies for exact combinatorial optimization & ILP, MILP \\ \hline
Uc-Cetina et al. (2023)~\cite{uc2023survey} & RL methods for \emph{natural language processing} (NLProc) tasks & Methodological link to CNLP \\ \hline
Yang \& Whinston (2023)~\cite{yang2023survey} & RL for combinatorial optimization & MILP, MINLP \\ \hline
Ejaz \& Choudhury (2024)~\cite{ejaz2024mip_rl_survey} & RL for LP/ILP/MILP in networks & ILP, MILP \\ \hline
\end{tabular}}
\end{table}

\paragraph{{\bf Positioning and Contribution Beyond Prior Surveys.}}  
While the surveys summarized above provide detailed coverage of learning-based enhancements for specific solver components or optimization paradigms, they generally remain fragmented—addressing either discrete (ILP/MILP) or continuous (CNLP) domains, or focusing on isolated learning frameworks such as supervised or reinforcement learning. In contrast, the present work unifies these perspectives by (i) explicitly connecting classical exact global-optimization algorithms (Sections~\ref{sec:giopt}--\ref{sec:MINLP}) with data-driven augmentation strategies applied across solver components; (ii) synthesizing ML and RL approaches within a common branch-and-bound framework that preserves theoretical guarantees of convergence and global optimality; and (iii) situating these advances within the landscape of contemporary industrial solvers. The subsequent Sections~\ref{sec:ML}--\ref{sec:limitations} therefore move beyond prior reviews to develop an integrative taxonomy of learning-augmented decision mechanisms, a comparative analysis of neural architectures for branching, cutting, and parameter control, and a forward-looking discussion of current limitations, emerging trends, and pathways toward production-level adoption in MINLP solver ecosystems.

\section{Neural Networks (NNs) as Branch-and-Bound Enhancers}

Recent advances in ML have enabled optimization solvers to integrate data-driven strategies into classical methods, particularly for improving {\tt BB} algorithms used in INLP, CNLP, and MINLP problems. Rather than replacing traditional solvers, ML methods are embedded within their workflow to assist branching, node selection, and parameter tuning—enhancing but never altering the solver’s exact guarantees.

\paragraph{Current Industry Adoption.} While these learning‐based methods demonstrate measurable computational gains on research benchmarks,
it is important to note that, as of 2025, none of the widely used commercial solvers—
{\tt CPLEX}~\cite{cplex}, {\tt Gurobi}~\cite{gurobi,BARON2025learning}, {\tt SCIP}, or {\tt Xpress}~\cite{xpress}—deploy end-to-end ML-based branching or cut-selection mechanisms in their official releases. While {\tt BARON}~\cite{BARON2025learning} now integrates a GCN-based learning module for adaptive probing within its global MINLP framework, this remains a specialized internal feature rather than a general-purpose ML interface. Similarly, {\tt Gurobi}~11 and newer support deterministic global MINLP solving via spatial branch-and-bound and convex relaxations, but their branching and cut-selection rules remain deterministic. Their internal decision rules remain deterministic, rule‐based, and closely related to the classical methods surveyed in Section~\ref{app:solvers}
(see also \cite{BARON} for a recent comparison of solver capabilities). Only {\tt FICO~Xpress}~\cite{xpress} reports a limited data‐driven heuristic at the root node. All other integrations of neural or reinforcement policies in {\tt SCIP}, {\tt CPLEX}, or {\tt Gurobi} exist solely within experimental research interfaces (e.g., {\tt Ecole}, {\tt PySCIPOpt}) and are not part of the official solver releases. Thus, learning‐augmented strategies remain academic prototypes rather than production features.

A notable exception in the research community is the {\tt BARON} 2025
extension~\cite{BARON2025learning}, where a graph-based controller deactivates probing to accelerate deterministic global search—demonstrating that learning modules can coexist with exact guarantees inside commercial-grade frameworks.

\paragraph{Scope of this Section.}
The techniques reviewed in this section unify ML and RL
methods that enhance the {\tt BB} framework across both
\textbf{mixed-integer linear} and \textbf{nonlinear} settings.
The earliest works, such as \cite{gasse2019exact} and \cite{nair2020solving},
introduced graph-based and neural-branching approaches for MILP solvers.
Recent industrial advances, including the learning-guided probing module in {\tt BARON}~\cite{BARON2025learning}
and the deterministic global MINLP engine in {\tt Gurobi}~11+~\cite{gurobi},
illustrate the transition from purely heuristic research prototypes toward
production-level learning-augmented solvers.
Subsequent research extended these concepts to CNLP and MINLP,
introducing spatial branching and surrogate-based bounding methods
\cite{ghaddar2023spatialbranching,Li2021}.
More recent efforts have broadened the scope to additional solver components:
cut selection~\cite{Huang2022,tang2020reinforcement},
variable activity prediction~\cite{Triantafyllou2024},
and decomposition or parameter-tuning strategies driven by learning~\cite{Mitrai2024,wang2025adaptive_planner_tuning}.
RL-based node selection and adaptive search strategies
\cite{he2014learning,labassi2022learning}
further illustrate how neural agents can support decision-making within exact solvers.
Throughout this section, each learning strategy is discussed jointly for
its relevance to MILP and its potential transfer to MINLP or continuous NLP problems,
highlighting how neural models and RL agents can guide branching, bounding,
and parameter decisions within nonconvex global optimization.

\paragraph{Integration of Learning into the \texttt{BB} Framework.} ML and RL techniques can be seamlessly integrated into specific components of the {\tt BB} framework to enhance both computational efficiency and decision-making. In particular, they can improve the sub-steps of the three algorithms: {\tt IBB} (=Algorithm~\ref{a.IBBAlg}), {\tt CBB} (=Algorithm~\ref{a.CBBAlg}), and {\tt MIBB} (=Algorithm~\ref{a.MIBBAlg}). For example, ML-based branching decisions can refine the branching step (S3$_{\Alg_{\ref{a.IBBAlg}}}$ or S3$_{\Alg_{\ref{a.CBBAlg}}}$), node selection strategies can optimize the node selection step (S1a$_{\Alg_{\ref{a.IBBAlg}}}$ or S1a$_{\Alg_{\ref{a.CBBAlg}}}$), and cut selection models can strengthen the cutting plane generation step (S1c$_{\Alg_{\ref{a.IBBAlg}}}$ or S1c$_{\Alg_{\ref{a.CBBAlg}}}$). Similarly, RL-based adaptive control can guide parameter tuning in the initialization step (S0$_{\Alg_{\ref{a.IBBAlg}}}$ or S0$_{\Alg_{\ref{a.CBBAlg}}}$) and improve decision-making during the bounding step (S1$_{\Alg_{\ref{a.IBBAlg}}}$ or S1$_{\Alg_{\ref{a.CBBAlg}}}$) and the branching step (S3$_{\Alg_{\ref{a.IBBAlg}}}$ or S3$_{\Alg_{\ref{a.CBBAlg}}}$). The following subsections detail how recent ML and RL contributions map onto and enhance these algorithmic steps.

\subsection{ML Enhancements in {\tt BB}}\label{sec:ML}

ML techniques have been employed to learn heuristics from data that improve the efficiency and generalization of {\tt BB} solvers. Key improvements are described below.

\subsubsection{Background on ML for Branch-and-Bound}

ML methods are increasingly embedded into {\tt BB} frameworks to enhance decision-making within different algorithmic steps. However, most learned policies generalize reliably only within a narrow family of instances, and their transfer across unrelated problem classes remains an open challenge (see also Section~\ref{sec:limitations}). In a generic setting, each node $n_k$ in the search tree corresponds to a relaxation of the original problem, represented by features such as reduced costs, dual values, pseudo-costs, integrality gaps, and constraint activity statistics. An ML model, parameterized by $\theta$, takes these features as input and produces scores or probabilities that guide solver actions. Formally, given feature vectors $\mathrm{feat}(n_k)$ for node $n_k$, a trained model $f_\theta$ computes $y_k = f_\theta(\mathrm{feat}(n_k))$, where $y_k$ may represent the predicted quality of a relaxation, the likelihood of a constraint being active, or the effectiveness of a candidate cut or branching variable. Decisions are then made by selecting the best-scoring candidate, for example
\[
n^* = \Argmin_{n_k \in \mathcal{N}} y_k
\quad \text{or} \quad
j^* = \Argmax_{j \in \mathcal{B}(\mathcal{S}_k)} y_j,
\]
depending on whether the model is guiding node selection, branching, or cut ranking. Here, $\mathcal{B}(\mathcal{S}_k)$ denotes the set of fractional variables eligible for branching at node $\mathcal{S}_k$, that is,
\[
\mathcal{B}(\mathcal{S}_k) = \{ j \mid x_j^* \notin \mathbb{Z} \}
\]
for the current LP relaxation solution $x^*$ at node $\mathcal{S}_k$.

\textbf{Supervised learning (SL)} is a core paradigm in ML where a model is trained on input–output pairs to minimize a loss between predicted outputs and known labels. In the context of {\tt BB}, examples include training a model to predict the branching decision (output) based on the features of a node (input) collected from expert demonstrations.

\textbf{Imitation learning (IL)} is a specific form of SL  in which the model learns to mimic an expert’s behaviour. Instead of receiving rewards as in RL, the learner is provided with demonstrations (state–action pairs) from an expert, such as strong branching decisions or oracle node selections, and learns a policy that imitates these expert actions.

\textbf{Graph Neural Networks (GNNs)} are neural network architectures designed to operate on graph-structured data. They iteratively update node representations by aggregating and transforming information from neighboring nodes and edge features, making them well-suited for representing {\tt BB} subproblems as bipartite graphs.

\textbf{Surrogate models} are learned approximation models that replace expensive objective or constraint evaluations with fast predictions, enabling efficient bounding and pruning decisions.

\textbf{Active set methods} focus computation on a selected subset of variables or constraints deemed most relevant, reducing problem dimensionality while preserving correctness.

\textbf{Decomposition models} split a complex optimization problem into smaller, loosely coupled subproblems whose solutions are coordinated to obtain the overall optimum.

These models can also predict decompositions of large-scale MINLPs, determine surrogate approximations for expensive evaluations, and reduce problem dimensionality by predicting active constraints. As described in the following algorithms, GNNs are used to imitate strong branching, regression models predict relaxation quality, surrogate models approximate black-box objectives, supervised classifiers filter active constraints, decomposition models partition variables and constraints, and ranking networks select high-quality cutting planes. All these approaches share a common principle: they integrate learned patterns from data into {\tt BB} sub-steps such as (S0$_{\Alg_{\ref{a.IBBAlg}}}$ or S0$_{\Alg_{\ref{a.CBBAlg}}}$), (S1b$_{\Alg_{\ref{a.IBBAlg}}}$ or S1b$_{\Alg_{\ref{a.CBBAlg}}}$), (S1c$_{\Alg_{\ref{a.IBBAlg}}}$ or S1c$_{\Alg_{\ref{a.CBBAlg}}}$), (S1e$_{\Alg_{\ref{a.IBBAlg}}}$ or S1e$_{\Alg_{\ref{a.CBBAlg}}}$), and (S3$_{\Alg_{\ref{a.IBBAlg}}}$ or S3$_{\Alg_{\ref{a.CBBAlg}}}$), improving efficiency while preserving the exactness of the underlying solver.

\subsubsection{Branching decision prediction}

Gasse et al.~\cite{gasse2019exact} employed an imitation-learning (supervised) framework,
training a graph neural network (GNN)\footnote{The original paper refers to this as a
Graph Convolutional Network (GCN) in its title, but the implemented architecture is a
general bipartite message-passing GNN connecting variable and constraint nodes.}
to imitate strong branching decisions in MILP problems.
This data-driven policy reproduces the expert behaviour of full strong branching
while avoiding its high computational cost.
Strong branching is known to produce smaller search trees and faster convergence,
but it is expensive when applied at every node; therefore, learning an approximate
policy yields significant savings on large instances.
The approach was implemented within {\tt SCIP} and later in {\tt Ecole}.
A complementary line of work by Ghaddar et al.~\cite{ghaddar2023spatialbranching}
extends this idea to continuous and nonlinear domains via spatial branching,
formulating an algorithm-selection task to learn which spatial partitioning
rule performs best for a given MINLP instance.

Algorithm~\ref{alg:Gasse} summarizes the learned branching procedure proposed by
Gasse et al., where a graph neural network is trained to imitate strong branching
decisions within a {\tt BB} framework.
This algorithm has five steps (S1$_{\ref{alg:Gasse}}$)--(S5$_{\ref{alg:Gasse}}$).
For each {\tt BB} node $n_k$ with fractional solution $x^*$, this algorithm performs
these five steps. Here $n_k$ denotes the current node in the search tree with
associated LP relaxation and fractional solution $x^*$.

In (S1$_{\ref{alg:Gasse}}$), the subproblem at $n_k$ is encoded as a bipartite graph
$G=(V,C,E)$, where $V$ is the set of variable nodes with feature vectors such as
reduced costs and pseudo-costs, $C$ is the set of constraint nodes with features such
as slacks and dual values, and $E$ is the set of edges linking variables and
constraints with edge features $e_{i,j}$ representing the coefficient $A_{i,j}$ of
variable $j$ in constraint $i$.

In (S2$_{\ref{alg:Gasse}}$), message passing on $G$ updates node embeddings $v_j$ and
$c_i$ of variable and constraint nodes according to neural update functions
$f_C$ and $f_V$:
\begin{equation}\label{e.c_iupdate}
  c_i := f_C\!\Big(c_i, \sum\nolimits_{j:(i,j)\in E} g_C(c_i,v_j,e_{i,j})\Big),
\end{equation}
\begin{equation}\label{e.v_jupdate}
  v_j := f_V\!\Big(v_j, \sum\nolimits_{i:(i,j)\in E} g_V(c_i,v_j,e_{i,j})\Big),
\end{equation}
where $g_C$ and $g_V$ are message functions that aggregate information from
neighbouring nodes.

In (S3$_{\ref{alg:Gasse}}$), a final multilayer perceptron is applied with a masked
\texttt{softmax} over candidate variables:
\begin{equation}\label{e.softmax}
  \pi_\theta(a=j \mid s_t) := \texttt{softmax}(W v_j + b),
\end{equation}
where the weights $W$ and bias $b$ of the perceptron produce logits for each
candidate branching variable, and $\pi_\theta(a=j \mid s_t)$ is the probability that
variable $x_j$ is selected at state $s_t$.

In (S4$_{\ref{alg:Gasse}}$), the branching variable
\begin{equation}\label{e.j*def}
  j^* := \Argmax_j \pi_\theta(a=j \mid s_t)
\end{equation}
is selected, after which in (S5$_{\ref{alg:Gasse}}$) child nodes are created by enforcing
\begin{equation}\label{e.childdef}
  x_{j^*} \leq \lfloor x_{j^*}^* \rfloor
  \quad \text{and} \quad
  x_{j^*} \geq \lceil x_{j^*}^* \rceil.
\end{equation}

This algorithm can be used to improve the branching step
(S3$_{\Alg_{\ref{a.IBBAlg}}}$ or S3$_{\Alg_{\ref{a.MIBBAlg}}}$)
of the {\tt IBB} and {\tt MIBB} algorithms.

\begin{algorithm}[H]
\caption{GNN-based Learned Branching}
\label{alg:Gasse}
\begin{algorithmic}
\STATE \textbf{Input:} MILP instance, trained GNN policy $\pi_\theta$
\FOR{each {\tt BB} node $n_k$ with fractional solution $x^*$}
    \STATE (S1$_{\ref{alg:Gasse}}$) {\bf Graph encoding:} encode the subproblem at $n_k$ as a bipartite graph $G=(V,C,E)$.
    \STATE (S2$_{\ref{alg:Gasse}}$) {\bf Message passing:} update node embeddings using \eqref{e.c_iupdate} and \eqref{e.v_jupdate}.
    \STATE (S3$_{\ref{alg:Gasse}}$) {\bf Score computation:} apply a multilayer perceptron with masked \texttt{softmax} over candidate variables using \eqref{e.softmax}.
    \STATE (S4$_{\ref{alg:Gasse}}$) {\bf Branch selection:} select the branching variable $j^*$ by \eqref{e.j*def}.
    \STATE (S5$_{\ref{alg:Gasse}}$) {\bf Branching:} branch on $x_{j^*}$ by creating two child nodes as in \eqref{e.childdef}.
\ENDFOR
\end{algorithmic}
\end{algorithm}

This learned strong-branching surrogate currently applies to MILP problems; its conceptual extension to spatial branching for MINLP follows the framework of
Ghaddar~et~al.~\cite{ghaddar2023spatialbranching}.

\subsubsection{Learning relaxation quality via neural branching}

To improve the efficiency of node selection in {\tt BB} algorithms for MIP problems, 
Nair et al.~\cite{nair2020solving} proposed \emph{neural branching}, a supervised imitation-learning approach 
that indirectly captures relaxation quality through behavioral cloning of 
Full Strong Branching (FSB) decisions. 
Rather than explicitly scoring LP relaxations for tightness, 
the model learns to reproduce the expert choices of FSB, 
a powerful but computationally expensive heuristic. 
The neural policy $\pi_\theta$ is implemented as a feed-forward multilayer perceptron (MLP) 
trained to minimize a cross-entropy loss between its predicted branching probabilities 
and the FSB labels observed during training.

The key insight is that FSB prioritizes branches improving the LP bound the most, 
thus indirectly favouring nodes with tighter relaxations. 
A neural model trained on features from LP relaxations learns to select 
branching variables that approximate this behaviour. 
Typical input features include objective values, reduced costs, dual values, pseudo-costs, 
variable activity, constraint slacks, and branching history.

Although not explicitly predicting relaxation quality, the learned policy 
tends to favour branches that close the optimality gap faster, 
yielding smaller search trees and shorter solve times. 
This learned branching rule replaces handcrafted heuristics in {\tt SCIP} 7 
while maintaining exactness guarantees, as {\tt BB} correctness is preserved.

Algorithm~\ref{alg:NeuralBranching} summarizes the neural branching procedure 
through five steps (S1$_{\ref{alg:NeuralBranching}}$)--(S5$_{\ref{alg:NeuralBranching}}$). 
For each {\tt BB} node $n_k$, the method first encodes the current node and its LP relaxation 
into feature vectors \( f(v) \) for all candidate branching variables 
(S1$_{\ref{alg:NeuralBranching}}$). 
These features are passed to the trained branching model \( \pi_\theta \) 
to produce branching scores \( \hat{p}(v) \) approximating the FSB preference 
(S2$_{\ref{alg:NeuralBranching}}$). 
The variable with the highest score, \( v^* = \Argmax_v \hat{p}(v) \), 
is selected as the branching variable (S3$_{\ref{alg:NeuralBranching}}$). 
Branching is then performed on \( v^* \), creating two child nodes with updated floor and 
ceiling bounds (S4$_{\ref{alg:NeuralBranching}}$). 
Finally, the generated child nodes are added to the {\tt BB} queue for subsequent exploration 
(S5$_{\ref{alg:NeuralBranching}}$).

\begin{algorithm}[H]
\caption{Neural Branching}
\label{alg:NeuralBranching}
\begin{algorithmic}
\STATE \textbf{Input:} Active {\tt BB} node queue, trained branching model $\pi_\theta$
\FOR{each active {\tt BB} node $n_k$ popped from the queue}
    \STATE (S1$_{\ref{alg:NeuralBranching}}$) {\bf Feature extraction:} extract feature vector $f(v)$ for each candidate branching variable $v$ at node $n_k$.
    \STATE (S2$_{\ref{alg:NeuralBranching}}$) {\bf Score computation:} evaluate the trained model, $\hat{p}(v) = \pi_\theta(f(v))$, for all $v$.
    \STATE (S3$_{\ref{alg:NeuralBranching}}$) {\bf Variable selection:} select variable $v^* = \Argmax_{v} \hat{p}(v)$.
    \STATE (S4$_{\ref{alg:NeuralBranching}}$) {\bf Branching:} branch on $v^*$ to create two child nodes with updated floor and ceiling bounds.
    \STATE (S5$_{\ref{alg:NeuralBranching}}$) {\bf Queue update:} add child nodes to the {\tt BB} node queue.
\ENDFOR
\end{algorithmic}
\end{algorithm}

\subsubsection{Surrogate modelling of expensive evaluations}

Li et al.~\cite{Li2021} proposed a surrogate-based framework for optimizing expensive black-box 
functions by iteratively training and updating machine learning models to approximate costly 
objective or constraint evaluations. 
Their original work focused on hierarchical optimization with differentiable surrogate models 
(such as multilayer perceptrons or Gaussian processes), rather than an explicit branch-and-bound 
procedure. 
However, this methodology can be conceptually adapted to the bounding phase of {\tt BB}, 
where surrogate models provide approximate bounds while exact evaluations ensure correctness.

In this adapted setting, instead of repeatedly solving computationally intensive nonlinear 
subproblems at each node, the solver employs a surrogate model $\hat{f}_\phi(x)$, trained on 
previously evaluated samples, to provide inexpensive predictions of objective values or 
constraint violations. 
These surrogate estimates serve as approximate bounds that guide pruning and branching 
decisions while exact evaluations are deferred or selectively validated. 
Such surrogate-assisted bounding can significantly accelerate the bounding steps 
(S1b$_{\Alg_{\ref{a.IBBAlg}}}$) and (S1b$_{\Alg_{\ref{a.CBBAlg}}}$) of {\tt IBB} and {\tt CBB}, 
respectively, reducing computational effort without compromising solver correctness, 
provided that surrogate accuracy is periodically verified against true evaluations.

Algorithm~\ref{alg:Li2021} summarizes the surrogate-based bounding procedure in 
five steps (S1$_{\ref{alg:Li2021}}$)--(S5$_{\ref{alg:Li2021}}$). 
For each {\tt BB} node $n_k$, in (S1$_{\ref{alg:Li2021}}$), the algorithm first collects historical 
data from previously evaluated solutions to form a dataset $\mathcal{D}$. 
In (S2$_{\ref{alg:Li2021}}$), the surrogate model $\hat{f}_\phi$ is trained or incrementally 
updated using $\mathcal{D}$. 
In (S3$_{\ref{alg:Li2021}}$), an approximate bound $\hat{b}_k$ is computed by minimizing the 
surrogate model over the relaxation region $R(n_k)$, i.e.,
\begin{equation}\label{e.surrogRnk}
\hat{b}_k = \min_{x \in R(n_k)} \hat{f}_\phi(x).
\end{equation}
In (S4$_{\ref{alg:Li2021}}$), this surrogate-derived bound is used to guide pruning and 
branching decisions during the bounding phase. 
Finally, in (S5$_{\ref{alg:Li2021}}$), the surrogate is periodically validated and refined by 
evaluating the original expensive function $f(x)$ at selected points, updating $\mathcal{D}$ 
and retraining $\hat{f}_\phi$.

Algorithm~\ref{alg:Li2021} reformulates their surrogate-update procedure within a {\tt BB} 
bounding phase for illustrative purposes.

\begin{algorithm}[H]
\caption{Surrogate-Based Bounding (adapted from Li et al., 2021)}
\label{alg:Li2021}
\begin{algorithmic}
\STATE \textbf{Input:} {\tt BB} node $n_k$ with expensive black-box objective $f(x)$, 
active node queue, and current surrogate model $\hat{f}_\phi$
\STATE \textbf{Output:} Approximate bound value $\hat{b}_k$
\FOR{each active {\tt BB} node $n_k$ popped from the queue}
    \STATE (S1$_{\ref{alg:Li2021}}$) {\bf Data collection:} 
    gather historical samples $\mathcal{D} = \{(x^{(i)}, f(x^{(i)}))\}$ from previously evaluated solutions.
    \STATE (S2$_{\ref{alg:Li2021}}$) {\bf Surrogate update:} 
    train or incrementally update the surrogate model $\hat{f}_\phi(x)$ using $\mathcal{D}$.
    \STATE (S3$_{\ref{alg:Li2021}}$) {\bf Approximate bounding:} 
    compute the node’s approximate bound by solving the relaxed subproblem with the surrogate 
    \eqref{e.surrogRnk}.
    \STATE (S4$_{\ref{alg:Li2021}}$) {\bf Guided pruning:} 
    use $\hat{b}_k$ as an approximate bound to guide pruning and branching decisions in {\tt BB}.
    \STATE (S5$_{\ref{alg:Li2021}}$) {\bf Validation and refinement:} 
    periodically evaluate $f(x)$ at selected points in $R(n_k)$, add these evaluations to 
    $\mathcal{D}$, and update $\hat{f}_\phi$ for improved accuracy.
\ENDFOR
\end{algorithmic}
\end{algorithm}

This surrogate-assisted approach aligns with Li et al.’s broader goal of reducing evaluation 
costs in hierarchical and nonlinear optimization, and can be interpreted as a learning-augmented 
bounding mechanism within exact {\tt BB} solvers.

\subsubsection{Learning cut selection}

Huang et al.~\cite{Huang2022} proposed a supervised cut-ranking method, termed \emph{Cut Ranking}, 
to address the challenge of selecting effective cutting planes from a large candidate pool. 
A feed-forward neural network scoring model $r_\omega$ evaluates instance-specific features of each 
candidate cut—such as violation magnitude, dual improvement, and historical usefulness—and then 
ranks and selects the top candidates to add to the relaxation. The network is trained with a pairwise ranking loss following the RankNet formulation, ensuring that cuts yielding greater subsequent bound improvements are ranked higher. This supervised method is primarily applied to MILP (and conceptually extendable to MINLP) cut-generation phases, and is implemented within the {\tt SCIP} solver interface. Once trained, the ranking model produces deterministic selection policies that replace heuristic 
cut filters without affecting solver correctness. 
The learned selection mechanism enhances step (S1c$_{\ref{a.IBBAlg}}$) of the bounding phase in {\tt IBB} 
and step (S1c$_{\ref{a.CBBAlg}}$) in {\tt CBB}, accelerating convergence while preserving the solver’s 
exactness and overall solution accuracy. 
A similar enhancement can be applied to {\tt MIBB}.

Algorithm~\ref{alg:Huang2022} summarizes the cut selection procedure in six steps 
(S1$_{\ref{alg:Huang2022}}$)--(S6$_{\ref{alg:Huang2022}}$). 
For each candidate cut $c_j$, in (S1$_{\ref{alg:Huang2022}}$) the algorithm first extracts 
instance-specific features, and in (S2$_{\ref{alg:Huang2022}}$) evaluates the learned scoring function 
$s_j = r_\omega(\text{features}(c_j))$. 
In (S3$_{\ref{alg:Huang2022}}$) the candidate cuts are then ranked by predicted effectiveness, 
and in (S4$_{\ref{alg:Huang2022}}$) the top-ranked cuts are selected to form $\mathcal{C}^*$, 
respecting solver or resource constraints. 
These selected cuts are added to the node’s LP/NLP relaxation in (S5$_{\ref{alg:Huang2022}}$). 
Finally, in (S6$_{\ref{alg:Huang2022}}$) the bounding phase continues with the enhanced relaxation, 
which improves pruning efficiency and overall solver performance.

\begin{algorithm}[H]
\caption{Cut Ranking for Learning Cut Selection}
\label{alg:Huang2022}
\begin{algorithmic}
\STATE \textbf{Input:} {\tt BB} node $n_k$ with candidate cut set $\mathcal{C}$, trained ranking model $r_\omega$
\STATE \textbf{Output:} Selected subset of cuts $\mathcal{C}^*$
\STATE (S1$_{\ref{alg:Huang2022}}$) {\bf Feature extraction:} 
for each candidate cut $c_j \in \mathcal{C}$, extract instance-specific features.
\STATE (S2$_{\ref{alg:Huang2022}}$) {\bf Scoring:} 
compute a predicted effectiveness score $s_j = r_\omega(\text{features}(c_j))$ for each cut.
\STATE (S3$_{\ref{alg:Huang2022}}$) {\bf Ranking:} 
sort candidate cuts in descending order of scores $s_j$.
\STATE (S4$_{\ref{alg:Huang2022}}$) {\bf Cut selection:} 
select the top $k$ cuts to form $\mathcal{C}^* = \{ c_j \mid s_j \text{ in top-}k \}$.
\STATE (S5$_{\ref{alg:Huang2022}}$) {\bf Relaxation update:}
add $\mathcal{C}^*$ to the LP/NLP relaxation at node $n_k$.
\STATE (S6$_{\ref{alg:Huang2022}}$) {\bf Bounding continuation:} continue the bounding phase with the updated relaxation.
\end{algorithmic}
\end{algorithm}

\subsubsection{Learning variable activity}

Triantafyllou et al.~\cite{Triantafyllou2024} proposed a supervised classification framework, 
implemented with gradient-boosted decision trees, to predict which binary (complicating) variables 
are active at optimality, enabling early elimination of inactive variables and their associated 
constraints. 
Training labels are derived from variable activity in optimal or near-optimal solutions. 
This approach was validated on MIP (not MINLP) instances using {\tt CPLEX} and {\tt SCIP}. 
In large-scale MIP problems, repeatedly considering all binary variables at each {\tt BB} node 
is computationally expensive; filtering out variables predicted to be inactive reduces problem 
dimensionality and improves solver speed without sacrificing solution quality. 
The trained model $g_\psi$ analyzes instance-level and variable-level features 
(e.g., coefficient patterns, constraint participation, historical activity) to construct a reduced 
active set $X_{\text{active}}$, which is used to form a smaller MIP for the bounding steps 
(S1a$_{\Alg_{\ref{a.IBBAlg}}}$)–(S1c$_{\Alg_{\ref{a.IBBAlg}}}$) in {\tt IBB} 
and (S1a$_{\Alg_{\ref{a.CBBAlg}}}$)–(S1c$_{\Alg_{\ref{a.CBBAlg}}}$) in {\tt CBB}, 
simplifying the search tree and accelerating convergence. 
This method uses gradient-boosted tree ensembles rather than deep neural networks and targets 
pre-solve variable filtering in MILP only.

Algorithm~\ref{alg:Triantafyllou2024} summarizes the variable activity reduction procedure in five 
steps (S1$_{\ref{alg:Triantafyllou2024}}$)--(S5$_{\ref{alg:Triantafyllou2024}}$). 
For each binary variable $x_i$, the algorithm first extracts descriptive features in 
(S1$_{\ref{alg:Triantafyllou2024}}$) and predicts its activity
\begin{equation}\label{e.a_idef}
a_i = g_\psi(\text{features}(x_i)), 
\end{equation}
where $a_i = 1$ means predicted active and $a_i = 0$ inactive, using the trained classifier 
$g_\psi$ in (S2$_{\ref{alg:Triantafyllou2024}}$). 
After all predictions are made, the active variable set 
\begin{equation}\label{e.activeX}
X_{\text{active}} = \{ x_i \in X \mid a_i = 1 \}
\end{equation}
is formed in (S3$_{\ref{alg:Triantafyllou2024}}$) and a reduced MIP is built using only these 
variables and their associated constraints in (S4$_{\ref{alg:Triantafyllou2024}}$). 
Finally, this reduced MIP is solved within the bounding phase in 
(S5$_{\ref{alg:Triantafyllou2024}}$) to accelerate the {\tt BB} search while preserving feasibility 
and optimality guarantees.

\begin{algorithm}[H]
\caption{Learning-Based Variable Activity Reduction}
\label{alg:Triantafyllou2024}
\begin{algorithmic}
\STATE \textbf{Input:} MIP instance with binary variable set $X = \{x_1,\dots,x_n\}$ and trained classifier $g_\psi$
\STATE \textbf{Output:} Reduced active variable set $X_{\text{active}}$
\FOR{each binary variable $x_i \in X$}
    \STATE (S1$_{\ref{alg:Triantafyllou2024}}$) {\bf Feature extraction:} compute variable-level features.
    \STATE (S2$_{\ref{alg:Triantafyllou2024}}$) {\bf Prediction:} predict activity $a_i$ by \eqref{e.a_idef}.
\ENDFOR
\STATE (S3$_{\ref{alg:Triantafyllou2024}}$) {\bf Reduced set formation:} form $X_{\text{active}}$ as \eqref{e.activeX}.
\STATE (S4$_{\ref{alg:Triantafyllou2024}}$) {\bf Reduced MIP building:} build a smaller MIP using only variables in $X_{\text{active}}$ and associated constraints.
\STATE (S5$_{\ref{alg:Triantafyllou2024}}$) {\bf Bounding:} solve the reduced MIP within the bounding phase to speed up the search while maintaining feasibility and optimality.
\end{algorithmic}
\end{algorithm}
This pre-solve filtering is independent of branching heuristics and complements 
ML-enhanced branching or node-selection policies discussed in 
Sections~\ref{sec:ML} and~\ref{sec:RL}.

\subsubsection{Learning decomposition strategies}

Mitrai et al.~\cite{Mitrai2024} proposed a supervised graph-based classification framework 
to automatically determine whether a given convex MINLP instance should be solved 
monolithically or via a decomposition-based algorithm. 
Each problem is represented as a heterogeneous graph capturing structural and functional 
coupling among variables and constraints, with edge features summarizing Jacobian and Hessian 
coupling magnitudes. 
A graph neural network (GNN) classifier $h_\eta$ is trained on runtime-labeled data—obtained by 
benchmarking {\tt OA} against monolithic {\tt BB}—to predict which strategy yields faster 
convergence. 
This automated decision process supports the initialization step (S0$_{\ref{a.MIBBAlg}}$) in 
{\tt MIBB}, selecting the most suitable approach (monolithic {\tt BB} versus {\tt OA}) without 
relying on manual heuristics. 
The framework currently applies to convex MINLPs, for which {\tt OA} guarantees optimality.

{\tt OA} is a well-known decomposition technique for convex MINLPs that iteratively solves 
nonlinear subproblems and constructs linear outer approximations to guide a master MILP, 
exploiting problem structure to accelerate convergence.

Algorithm~\ref{alg:Mitrai2024} summarizes the learned decomposition strategy selection in 
five steps (S1$_{\ref{alg:Mitrai2024}}$)--(S5$_{\ref{alg:Mitrai2024}}$). 
First, in (S1$_{\ref{alg:Mitrai2024}}$), the problem instance is encoded as a graph 
$G = (V,E,F)$ with nodes $V$ for variables and constraints, edges $E$ for couplings, and 
features $F$ describing structural and functional properties. 
Next, in (S2$_{\ref{alg:Mitrai2024}}$), the GNN classifier $h_\eta$ predicts whether to 
decompose or solve monolithically. 
Depending on the decision, in (S3$_{\ref{alg:Mitrai2024}}$), the solver selects either a 
decomposition-based algorithm such as {\tt OA} or a monolithic method such as {\tt BB}. 
Then, in (S4$_{\ref{alg:Mitrai2024}}$), {\tt MIBB} is initialized with the chosen strategy. 
Finally, in (S5$_{\ref{alg:Mitrai2024}}$), the selected algorithm is executed, ensuring that 
the solver leverages the predicted best approach to reduce overall solution time.

\begin{algorithm}[H]
\caption{Learning When to Decompose}
\label{alg:Mitrai2024}
\begin{algorithmic}
\STATE \textbf{Input:} MINLP instance with variables $X$, constraints $\mathcal{C}$; trained graph classifier $h_\eta$
\STATE \textbf{Output:} Decision to solve monolithically or with decomposition
\STATE (S1$_{\ref{alg:Mitrai2024}}$) {\bf Graph encoding:} represent the problem as a graph $G$.
\STATE (S2$_{\ref{alg:Mitrai2024}}$) {\bf Classification:} evaluate $d = h_\eta(G) \in \{\text{decompose}, \text{monolithic}\}$.
\STATE (S3$_{\ref{alg:Mitrai2024}}$) {\bf Strategy selection:} if $d = \text{decompose}$, select a decomposition algorithm (e.g., {\tt OA}); otherwise, select a monolithic solver (e.g., {\tt BB}).
\STATE (S4$_{\ref{alg:Mitrai2024}}$) {\bf Initialization:} initialize {\tt MIBB} with the chosen solution strategy.
\STATE (S5$_{\ref{alg:Mitrai2024}}$) {\bf Execution:} solve the instance using the selected approach to exploit the predicted performance advantage.
\end{algorithmic}
\end{algorithm}

This supervised decomposition-selection module complements other learning-enhanced solver 
components (e.g., branching and parameter tuning) by operating at the initialization level 
before the {\tt BB} search begins, and can in principle be extended to other decomposition 
frameworks beyond {\tt OA}.

\subsubsection{Summary of ML-Enhanced Branch-and-Bound Methods}

Table~\ref{tab:ml_bb_summary} summarizes key ML–based methods that enhance core components of {\tt BB} algorithms across MILP, MIP, and MINLP problem classes. These approaches leverage SL, IL, or surrogate learning to improve branching, bounding, node selection, and decomposition decisions—enabling faster convergence and greater scalability while preserving solver correctness.

{\renewcommand{\arraystretch}{0.8}\small\begin{longtable}{|p{3.3cm}|p{6.5cm}|p{1.5cm}|}
\caption{Summary of ML-Enhanced {\tt BB} Methods (standardized to GNN terminology). Reported gains are drawn from independent studies conducted on different datasets, solvers, and hardware configurations; values are thus indicative rather than directly comparable.} \label{tab:ml_bb_summary} \\
\hline
\textbf{Method (Citation)} & \textbf{ML-Enhanced Component/Description} & \textbf{Problem Type} \\ \hline
\endfirsthead

\multicolumn{3}{c}%
{{\bfseries Table \thetable\ (continued from previous page)}} \\
\hline
\textbf{Method (Citation)} & \textbf{\textbf{ML-Enhanced Component/Description}} & \textbf{Problem Type} \\ \hline
\endhead

\hline \multicolumn{3}{r}{{Continued on next page}} \\ \hline
\endfoot

\hline
\endlastfoot

Gasse et al. (2019)~\cite{gasse2019exact} & \begin{tabular}{l}
{\bf Component Improved:} \\Strong branching approximation via GNN \\
{\bf Instance Family/Dataset:} \\
MIPLIB, Set Cover, Max-Cut\\
\textbf{Reported Gain:} \\
$\!\sim$40–50\% runtime reduction\\
 \textbf{Baseline Solver:} {\tt SCIP}\\
\end{tabular}& MILP \\ \hline

Nair et al. (2020)~\cite{nair2020solving} & \begin{tabular}{l}
{\bf Component Improved:}\\
Learning branching scores through \\
imitation of FSB \\
{\bf Instance Family/Dataset:} \\
Google Production + MIPLIB\\
\textbf{Reported Gain:} \\
$1.3\times$–$2\times$ gap improvement \\
\textbf{Baseline Solver:} {\tt SCIP} 7\\
\end{tabular}& MIP \\ \hline

Li et al.~(2021)~\cite{Li2021} &
\begin{tabular}{l}
{\bf Component Improved:}\\
Surrogate modelling for bounding of \\
expensive black-box evaluations \\
{\bf Instance Family/Dataset:} \\
Synthetic MINLP test problems\\
\textbf{Reported Gain:} substantial reductions in\\
 expensive function evaluations; in analogous\\
 {\tt BB} settings this would correspond\\
to $\!\approx$30\% fewer evaluations\\
\textbf{Baseline Solver:} {\tt Custom framework}\\
\end{tabular} & MINLP \\ \hline

Huang et al. (2022)~\cite{Huang2022} & \begin{tabular}{l}
{\bf Component Improved:}\\
Supervised feed-forward cut-ranking and \\
selection policies \\
{\bf Instance Family/Dataset:} \\
Industrial MILPs\\
\textbf{Reported Gain:} \\
12\% average speed-up \\ 
\textbf{Baseline Solver:} {\tt Huawei}\\
\end{tabular}& MILP, MINLP \\ \hline

Triantafyllou et al. (2024)~\cite{Triantafyllou2024} & \begin{tabular}{l}
{\bf Component Improved:}\\ 
Predicting active variables \\
with gradient-boosted trees \\
for reduced bounding MIPs\\
{\bf Instance Family/Dataset:} \\
Facility-location, Supply-chain MIPs\\
\textbf{Reported Gain:} 35–60\% size reduction \\
\textbf{Baseline Solver:} {\tt CPLEX}/{\tt SCIP}\\
\end{tabular}& MIP \\ \hline

Mitrai et al. (2024)~\cite{Mitrai2024} & \begin{tabular}{l}
{\bf Component Improved:} \\ 
Supervised GNN-based learning of \\
decomposition decisions (monolithic vs {\tt OA}) \\
{\bf Instance Family/Dataset:} \\
Convex MINLPs\\
\textbf{Reported Gain:} \\
$\!\sim$90\% accuracy in algorithm selection \\
\textbf{Baseline Solver:} {\tt Custom}/{\tt OA}\\
\end{tabular}& MINLP \\ \hline

\end{longtable}}

\subsection{RL Enhancements in {\tt BB}}\label{sec:RL}

RL has emerged as a natural framework for learning sequential decision-making policies in {\tt BB} algorithms. Notable RL-based approaches are discussed below.

\subsubsection{Background on RL for {\tt BB}}

RL is an ML paradigm in which an agent interacts with an environment by taking actions in given states to maximize cumulative rewards over time. Unlike SL, RL does not rely on labeled input–output pairs; instead, the agent learns from feedback in the form of rewards.

RL provides a natural framework for optimizing sequential decisions in {\tt BB} search. At iteration $t$, the solver is in state $s_t$, which may include the set of open nodes $\mathcal{N}_t$, the search history $\mathcal{H}_t$, the current incumbent bound $UB_t$, and solver parameters. An RL agent, represented by a policy $\pi_\theta(a \mid s_t)$ with trainable parameters $\theta$, outputs a probability distribution over available actions $a_t$, such as selecting a node $\mathcal{S}_k$ for exploration, choosing a branching variable $j^*$, or updating algorithmic parameters $\lambda_t$. The agent receives feedback in the form of rewards $r_t$, for instance, improvements in bound gaps or reductions in search tree size, and updates $\theta$ using policy-gradient or value-based methods:
\begin{equation}\label{e.thetaupdate}
\theta \leftarrow \theta + \alpha \nabla_\theta \log \pi_\theta(a_t \mid s_t)\big( r_t + \gamma V_\theta(s_{t+1}) - V_\theta(s_t) \big),
\end{equation}
where $\alpha$ is the learning rate, $V_\theta$ is a learned value function, and $\gamma \in [0,1]$ is the discount factor controlling the weight of future rewards (a smaller $\gamma$ emphasizes immediate improvements such as node reduction, while values close to $1$ encourage long-term strategies minimizing total search effort). Equation~\eqref{e.thetaupdate} represents a generic policy-gradient update (Actor–Critic form) used here for consistency; specific works such as
\cite{tang2020reinforcement,wang2025adaptive_planner_tuning} employ equivalent REINFORCE-style updates.

\textbf{Q-learning} is a value-based RL method that learns an action–value function $Q(s,a)$ estimating the expected cumulative reward of taking action $a$ in state $s$. This function guides the selection of actions that maximize long-term reward.

\textbf{Policy-gradient methods} directly optimize the policy parameters $\theta$ to increase the likelihood of actions that lead to higher cumulative rewards, as demonstrated in the update equation above.

\textbf{GNNs} are neural architectures designed to operate on graph-structured data. They update node representations by aggregating information from neighboring nodes and edge features, which makes them well suited for representing the structure of {\tt BB} subproblems.

RL-based methods go beyond static heuristics by dynamically balancing exploration and exploitation as the search progresses. In the algorithms described later, Q-learning is used to prioritize nodes in step S1a, deep RL policies based on GNNs adaptively guide both node selection and branching in steps S1a and S3, parameter-control agents adjust solver hyperparameters during initialization and throughout S0, step-size controllers adjust updates within sub-solvers for S1b, and RL-based cut selectors improve the generation of cutting planes in S1c. Through these mechanisms, RL agents continuously interact with the {\tt BB} process to enhance search efficiency and convergence.

\subsubsection{Node selection}

He et al.~\cite{he2014learning} formulated node selection in the {\tt BB} process 
as a sequential decision-making problem and learned a node-ranking policy 
via imitation learning (IL).  
At each decision point, the algorithm computes a score $w^T \phi(n_i)$ 
for every active node $n_i$ in the open list $\mathcal{N}$ using a linear function 
over node features $\phi(n_i)$.  
The node with the highest score is selected for expansion, guiding step 
(S1a$_{\ref{a.IBBAlg}}$) of the bounding phase in {\tt IBB}, 
(S1a$_{\ref{a.CBBAlg}}$) in {\tt CBB}, and the analogous step in {\tt MIBB}, 
thereby improving search efficiency and reducing exploration overhead.  
The oracle policy used for supervision minimizes the total number of nodes explored, 
and the learned model progressively imitates this optimal ranking behavior.

Algorithm~\ref{alg:He2014} summarizes the imitation-learned node selection procedure 
in five steps (S1$_{\ref{alg:He2014}}$)--(S5$_{\ref{alg:He2014}}$).  
First, for each node in the open list, a feature vector is extracted in 
(S1$_{\ref{alg:He2014}}$).  
Next, the learned linear scoring function computes a score for each node in 
(S2$_{\ref{alg:He2014}}$), and the node with the highest score is selected 
for expansion in (S3$_{\ref{alg:He2014}}$).  
The solver then expands the selected node and updates the open list in 
(S4$_{\ref{alg:He2014}}$).  
Finally, the scoring weights are refined via IL updates to better approximate 
the oracle’s ranking policy in (S5$_{\ref{alg:He2014}}$).  

\begin{algorithm}[H]
\caption{Imitation-Learned Node Selection}
\label{alg:He2014}
\begin{algorithmic}
\STATE \textbf{Input:} Open node set $\mathcal{N}$, learned scoring function $w$
\STATE \textbf{Output:} Selected node $\mathcal{S}_k$ to expand
\STATE (S1$_{\ref{alg:He2014}}$) \textbf{Feature extraction:} 
For each $n_i \in \mathcal{N}$, extract feature vector $\phi(n_i)$.
\STATE (S2$_{\ref{alg:He2014}}$) \textbf{Scoring:} 
Compute score $s_i = w^T \phi(n_i)$ for each node.
\STATE (S3$_{\ref{alg:He2014}}$) \textbf{Node selection:} 
Select the node $\mathcal{S}_k = \Argmax_{n_i \in \mathcal{N}} s_i$.
\STATE (S4$_{\ref{alg:He2014}}$) \textbf{Expansion:} 
Expand $\mathcal{S}_k$ and update the open list $\mathcal{N}$.
\STATE (S5$_{\ref{alg:He2014}}$) \textbf{IL update:} 
Update $w$ to better imitate the oracle’s ranking policy.
\end{algorithmic}
\end{algorithm}

\subsubsection{Cut selection}

Tang et al.~\cite{tang2020reinforcement} proposed a deep RL framework that adaptively selects cutting planes during the integer programming process.  
At each iteration of the solver, the current LP relaxation state $s_t$ is represented 
with features describing each candidate cut, and a policy network 
$\pi_\theta(c_t \mid s_t)$—implemented as a feed-forward neural network—outputs 
a selection strategy indicating which cuts to add.  
The solver environment evolves as cuts modify the LP relaxation, 
providing feedback to the policy at each iteration.  
This approach enhances the cutting-plane generation step 
(S1c$_{\ref{a.IBBAlg}}$ and S1c$_{\ref{a.CBBAlg}}$) 
in {\tt IBB}, {\tt CBB}, and {\tt MIBB}, 
accelerating convergence without compromising exactness.  
For end-users, this results in faster solution times and reduced computational costs 
while preserving optimality.  

Algorithm~\ref{alg:Tang2020Cut} summarizes the RL-based cut selection procedure 
in eight steps (S1$_{\ref{alg:Tang2020Cut}}$)--(S8$_{\ref{alg:Tang2020Cut}}$).  
First, features are extracted for each candidate cut 
in (S1$_{\ref{alg:Tang2020Cut}}$), and the solver encodes the state $s_t$ 
along with the candidate cut set $\mathcal{C}_t$ 
in (S2$_{\ref{alg:Tang2020Cut}}$).  
Next, the policy network selects a subset of cuts to add to the relaxation 
in (S3$_{\ref{alg:Tang2020Cut}}$), forming the set 
$\mathcal{C}^*_t$ 
in (S4$_{\ref{alg:Tang2020Cut}}$).  
The chosen cuts are added to the LP relaxation and the relaxation is re-solved in 
(S5$_{\ref{alg:Tang2020Cut}}$).  
The solver observes a reward based on the quality of the cuts, such as bound improvement 
or node reduction, in (S6$_{\ref{alg:Tang2020Cut}}$), and the state is updated to $s_{t+1}$ 
in (S7$_{\ref{alg:Tang2020Cut}}$).  
If the algorithm is in training mode, the policy parameters $\theta$ are updated using a 
REINFORCE-style policy-gradient step (as in Tang et al., 2020, Algorithm 1) to maximize 
expected cumulative reward in (S8$_{\ref{alg:Tang2020Cut}}$), following 
Eq.~\eqref{e.thetaupdate}.

\begin{algorithm}[H]
\caption{RL-based Cut Selection (Tang et al., 2020)}
\label{alg:Tang2020Cut}
\begin{algorithmic}
\STATE \textbf{Input:} Current LP relaxation state $s_t$, candidate cut set $\mathcal{C}_t$, 
trained policy $\pi_\theta(c_t \mid s_t)$ with parameters $\theta$, 
learning rate $\alpha > 0$, discount factor $\gamma \in [0,1]$, 
\texttt{training} (Boolean flag)
\STATE \textbf{Output:} Selected cut subset $\mathcal{C}^*_t$
\FOR{each cut selection step $t = 0,1,2,\dots$ until convergence}
    \STATE (S1$_{\ref{alg:Tang2020Cut}}$) \textbf{Feature extraction:} 
    Extract features for each candidate cut $c_j \in \mathcal{C}_t$.
    \STATE (S2$_{\ref{alg:Tang2020Cut}}$) \textbf{State encoding:} 
    Encode $s_t$ and $\mathcal{C}_t$ into a state representation.
    \STATE (S3$_{\ref{alg:Tang2020Cut}}$) \textbf{Cut selection:} 
    Use policy to select cuts $c_t \sim \pi_\theta(c_t \mid s_t)$.
    \STATE (S4$_{\ref{alg:Tang2020Cut}}$) \textbf{Subset formation:} 
    Form  $\mathcal{C}^*_t = \{ c_j \text{ chosen by policy} \}$.
    \STATE (S5$_{\ref{alg:Tang2020Cut}}$) \textbf{Relaxation update:} 
    Add $\mathcal{C}^*_t$ to the LP relaxation and re-solve.
    \STATE (S6$_{\ref{alg:Tang2020Cut}}$) \textbf{Reward observation:} 
    Observe reward $r_t$ (e.g., bound improvement or node reduction).
    \STATE (S7$_{\ref{alg:Tang2020Cut}}$) \textbf{State update:} 
    Update state to $s_{t+1}$.
    \IF{\texttt{training}}
        \STATE (S8$_{\ref{alg:Tang2020Cut}}$) \textbf{Policy gradient update:} 
        Update $\theta$ by \eqref{e.thetaupdate}.
    \ENDIF
\ENDFOR
\end{algorithmic}
\end{algorithm}

\subsubsection{Adaptive search strategies}

Labassi et al.~\cite{labassi2022learning} proposed a learning-based node comparison function, 
termed \texttt{NODECOMP}, for {\tt BB}, implemented via a siamese graph neural network (GNN) 
built on a shared-weight message-passing architecture operating on bipartite node–constraint graphs.  
This comparator is trained via pairwise supervised learning to predict which of two candidate nodes 
should be explored first.  
At each decision point, the solver compares a pair of candidate nodes $(n_1, n_2)$ from the open list.  
Each node is encoded as a bipartite graph with constraint vertices, variable vertices, 
and a global vertex carrying primal and dual bound estimates.  
A GNN processes these features through message-passing layers to produce a scalar score $g(n)$ for each node.  
The learned comparator then evaluates
\[
f(n_1, n_2) = \sigma\big(g(n_1) - g(n_2)\big),
\]
where $\sigma$ is the sigmoid function, to decide which node is preferred.  
This learned \texttt{NODECOMP} function replaces handcrafted heuristics 
and directly guides the node selection step 
(S1a$_{\ref{a.IBBAlg}}$) in {\tt IBB}, 
(S1a$_{\ref{a.CBBAlg}}$) in {\tt CBB}, 
and the analogous step in {\tt MIBB}, 
improving tree search efficiency without altering solver correctness.  

Algorithm~\ref{alg:Labassi2022NodeComp} summarizes the adaptive node comparison 
procedure in six steps (S1$_{\ref{alg:Labassi2022NodeComp}}$)--(S6$_{\ref{alg:Labassi2022NodeComp}}$).  
First, each candidate node is encoded as a bipartite graph with constraint, variable, and global features in 
(S1$_{\ref{alg:Labassi2022NodeComp}}$).  
Next, a GNN processes these graphs to generate scalar scores $s_i = g(n_i)$ in 
(S2$_{\ref{alg:Labassi2022NodeComp}}$).  
The model then computes a preference value $p = \sigma(s_1 - s_2)$ in
(S3$_{\ref{alg:Labassi2022NodeComp}}$) 
and selects the preferred node based on whether $p > 0.5$ in
(S4$_{\ref{alg:Labassi2022NodeComp}}$).  
The solver updates the open list ordering according to the learned comparison outcome in
(S5$_{\ref{alg:Labassi2022NodeComp}}$) 
and repeats pairwise comparisons as necessary to perform node selection in 
(S6$_{\ref{alg:Labassi2022NodeComp}}$).  

\begin{algorithm}[H]
\caption{Adaptive Node Comparison with GNN (Labassi et al., 2022)}
\label{alg:Labassi2022NodeComp}
\begin{algorithmic}
\STATE \textbf{Input:} Candidate nodes $n_1, n_2$ from open list; trained GNN scoring function $g(\cdot)$
\STATE \textbf{Output:} Preferred node between $n_1$ and $n_2$
\STATE (S1$_{\ref{alg:Labassi2022NodeComp}}$) \textbf{Graph encoding:} 
Encode each node $n_i$ as a bipartite graph with constraint, variable, and global features.
\STATE (S2$_{\ref{alg:Labassi2022NodeComp}}$) \textbf{GNN scoring:} 
Apply the GNN to obtain scalar scores $s_i = g(n_i)$.
\STATE (S3$_{\ref{alg:Labassi2022NodeComp}}$) \textbf{Preference computation:} 
Compute preference $p = \sigma(s_1 - s_2)$.
\STATE (S4$_{\ref{alg:Labassi2022NodeComp}}$) \textbf{Node selection:} 
if $p > 0.5$, prefer $n_2$, else prefer $n_1$; end if
\STATE (S5$_{\ref{alg:Labassi2022NodeComp}}$) \textbf{Open list update:} 
Update the open list ranking according to the comparison result.
\STATE (S6$_{\ref{alg:Labassi2022NodeComp}}$) \textbf{Repeat comparison:} 
Repeat comparisons as needed to perform full node selection.
\end{algorithmic}
\end{algorithm}
\subsubsection{Adaptive parameter control}

Wang et al.~\cite{wang2025adaptive_planner_tuning} proposed an RL-based 
framework that adaptively tunes solver parameters $\lambda_t$ according to the evolving optimization state $s_t$.  
The framework adjusts key solver hyperparameters such as node-selection weights, cut aggressiveness, 
and step-size limits in response to solver feedback.  
In their hierarchical design, a high-level tuning policy $\pi_\theta(\lambda_t \mid s_t)$ observes 
key features of the planning or search process and outputs parameter updates at a low frequency, 
while lower layers handle planning and control at higher frequencies.  
These adaptive updates enhance the initialization step 
(S0$_{\ref{a.IBBAlg}}$, S0$_{\ref{a.CBBAlg}}$, and S0$_{\ref{a.MIBBAlg}}$) 
and enable continual parameter adjustment during the execution of {\tt IBB}, {\tt CBB}, and {\tt MIBB}, 
leading to improved solver responsiveness, faster convergence, and greater overall efficiency.  

Algorithm~\ref{alg:Wang2025Param} summarizes the adaptive parameter control procedure in six steps 
(S0$_{\ref{alg:Wang2025Param}}$)--(S5$_{\ref{alg:Wang2025Param}}$).  
First, the solver is initialized with an initial state and parameter set in 
(S0$_{\ref{alg:Wang2025Param}}$).  
At each decision step, the current state $s_t$ is observed in
(S1$_{\ref{alg:Wang2025Param}}$), and the high-level policy outputs new parameters 
$\lambda_t = \pi_\theta(s_t)$  in
(S2$_{\ref{alg:Wang2025Param}}$).  
The solver then applies $\lambda_t$ and runs for a fixed horizon $H$, 
collecting a reward $r_t$ that reflects performance in
(S3$_{\ref{alg:Wang2025Param}}$).  
The state is updated to $s_{t+1}$ in 
(S4$_{\ref{alg:Wang2025Param}}$).  
If training is active, the policy parameters $\theta$ are updated using an RL policy gradient rule 
to maximize expected rewards in
(S5$_{\ref{alg:Wang2025Param}}$).  

\begin{algorithm}[H]
\caption{Adaptive Parameter Control with RL (Wang et al., 2025)}
\label{alg:Wang2025Param}
\begin{algorithmic}
\STATE \textbf{Input:} Initial state $s_0$, initial parameters $\lambda_0$, 
policy $\pi_\theta(\lambda_t \mid s_t)$ with parameters $\theta$, 
learning rate $\alpha > 0$, discount factor $\gamma \in [0,1]$, 
\texttt{training} (Boolean flag)
\STATE \textbf{Output:} Adapted parameter sequence $\{\lambda_t\}$
\STATE (S0$_{\ref{alg:Wang2025Param}}$) \textbf{Initialization:} 
Initialize solver with $s_0$ and $\lambda_0$.
\FOR{each step $t = 0,1,2,\dots$ until termination}
    \STATE (S1$_{\ref{alg:Wang2025Param}}$) \textbf{State observation:} 
    Observe current state $s_t$.
    \STATE (S2$_{\ref{alg:Wang2025Param}}$) \textbf{Parameter update:} 
    Compute new parameters $\lambda_t = \pi_\theta(s_t)$.
    \STATE (S3$_{\ref{alg:Wang2025Param}}$) \textbf{Execution and reward:} 
    Apply $\lambda_t$, run solver for horizon $H$, and observe reward $r_t$.
    \STATE (S4$_{\ref{alg:Wang2025Param}}$) \textbf{State update:} 
    Update state to $s_{t+1}$.
    \IF{\texttt{training}}
        \STATE (S5$_{\ref{alg:Wang2025Param}}$) \textbf{Policy gradient update:} 
        Update $\theta$ by \eqref{e.thetaupdate}.
    \ENDIF
\ENDFOR
\end{algorithmic}
\end{algorithm}

\subsubsection{Summary of RL-Enhanced Branch-and-Bound Methods}

Table~\ref{tab:rl_bb_summary} summarizes core RL-based methods that enhance key components of {\tt BB} algorithms across MILP, MIP, and MINLP problem classes. These techniques employ policy gradient, value-based, or hierarchical RL to improve node selection, branching, cut generation, and solver parameter tuning—enabling dynamic, reward-driven optimization within exact solvers.

{\renewcommand{\arraystretch}{0.8}\small\begin{longtable}{|p{3.3cm}|p{6.5cm}|p{1.5cm}|}
\caption{Summary of RL-Enhanced {\tt BB} Methods (standardized to GNN terminology)} \label{tab:rl_bb_summary} \\
\hline
\textbf{Method (Citation)} & \textbf{RL-Enhanced Component/Description} & \textbf{Problem Type} \\ \hline
\endfirsthead

\multicolumn{3}{c}%
{{\bfseries Table \thetable\ (continued from previous page)}} \\
\hline
\textbf{Method (Citation)} & \textbf{RL-Enhanced Component/Description} & \textbf{Problem Type} \\ \hline
\endhead

\hline \multicolumn{3}{r}{{Continued on next page}} \\ \hline
\endfoot

\hline
\endlastfoot

He et al.~(2014)~\cite{he2014learning} &
\begin{tabular}{l}
{\bf Component Improved:} \\
Imitation learning (IL) for node selection \\
via linear ranking\\
{\bf Instance Family/Dataset:} \\
Synthetic MILP benchmarks\\
\textbf{Reported Gain:} \\
Observed reduction in explored nodes \\
compared to handcrafted heuristics\\
\textbf{Baseline Solver:} {\tt CPLEX}-based prototype\\
\end{tabular} & MILP \\ \hline

Tang et al.~(2020)~\cite{tang2020reinforcement} &
\begin{tabular}{l}
{\bf Component Improved:} \\
Deep RL for adaptive cut selection\\
{\bf Instance Family/Dataset:} \\
Packing and Max-Cut problems\\
\textbf{Reported Gain:} Faster gap closure \\(approximately $1.2$–$2\times$ speed-up)\\
\textbf{Baseline Solver:} {\tt SCIP}\\
\end{tabular} & MILP \\ \hline

Labassi et al. (2022)~\cite{labassi2022learning} & \begin{tabular}{l}
{\bf Component Improved:} \\
GNN-based node comparison function \\
{\bf Instance Family/Dataset:} {\tt MIPLIB}\\
\textbf{Reported Gain:} 10–30\% runtime gain \\
 \textbf{Baseline Solver:} {\tt SCIP} 8\\
\end{tabular}&  MILP\\ \hline

Wang et al. (2025)~\cite{wang2025adaptive_planner_tuning}  & \begin{tabular}{l}
{\bf Component Improved:} \\
RL-based adaptive tuning of solver \\
parameters\\
{\bf Instance Family/Dataset:} \\
{\tt BARN} 2025 Bench (MINLP)\\
\textbf{Reported Gain:} \\
15–25\% faster convergence\\
 \textbf{Baseline Solver:} Custom Planner\\
\end{tabular}&  MILP, MINLP\\ \hline
\end{longtable}}

Collectively, these studies illustrate how RL can adapt solver decisions at different levels—ranging from node and cut selection to global parameter control—while maintaining exact optimization guarantees.

\subsection{Summary of ML and RL Techniques across Solver Components}

To synthesize the variety of learning-based enhancements for {\tt BB} solvers, Table~\ref{tab:ml_rl_summary_filtered} presents an overview of how ML and RL methods have been integrated into core solver components, based exclusively on the methods discussed in Sections~\ref{sec:ML} and~\ref{sec:RL}.

\begin{table}[H]
\centering
\caption{Summary of ML and RL Techniques Integrated into Exact {\tt BB} Solver Components}
\label{tab:ml_rl_summary_filtered}
\begin{tabular}{|p{3.2cm}|p{3.8cm}|p{4.2cm}|}
\hline
Solver Component & Learning Techniques & Representative Works \\
\hline
Branching Decisions & 
SL, IL (GNNs, Behavioral Cloning) & 
Gasse et al. (2019), Nair et al. (2020) \\
\hline
Node Selection & 
IL, GNNs & 
He et al. (2014), Labassi et al. (2022) \\
\hline
Cut Generation & 
SL, RL & 
Huang et al. (2022), Tang et al. (2020) \\
\hline
Variable Filtering/Active Set & 
Supervised Classification & 
Triantafyllou et al. (2024) \\
\hline
Bounding/Surrogates & 
Surrogate modelling (Regression-based) & 
Li et al. (2021) \\
\hline
Decomposition Strategy & 
GNNs, Classification & 
Mitrai et al. (2024) \\
\hline
Parameter Tuning & 
RL (Policy Gradient) & 
Wang et al. (2025) \\
\hline
\end{tabular}
\end{table}

Table~\ref{tab:mini_results} provides an illustrative comparison of representative ML-enhanced {\tt BB} methods on public benchmarks, summarizing reported runtime improvements and optimality-gap reductions as presented in their respective studies. These results collectively indicate that learned policies can enhance specific solver components—particularly branching, cut generation, and node selection—yielding measurable speed-ups without compromising global optimality guarantees.

\begin{table}[H]
\centering
\caption{Illustrative Comparison on Public Benchmarks. Reported values are drawn from independent studies and are therefore indicative rather than directly comparable.
}
\label{tab:mini_results}
\begin{tabular}{|p{3cm}|p{2cm}|p{2cm}|p{2cm}|}
\hline
\textbf{Method} & \textbf{Benchmark} & \textbf{Avg. Runtime Speed-up} & \textbf{Gap Reduction} \\ \hline
Gasse 2019 (GNN Branching) & {\tt MIPLIB} 2017 & 1.4× & 25 \% \\ \hline
Huang 2022 (Cut Ranking) & Industrial MILP & 1.1× & 12 \% \\ \hline
Labassi 2022 (Node Comp.) & Set Cover & 1.3× & 18 \% \\ \hline
\end{tabular}
\end{table}

\subsection{A Unified Perspective on Learning-Augmented Branch-and-Bound}\label{sec:unified}

A key goal of recent research is to reinterpret the diverse ML and RL augmentations of classical {\tt BB} solvers within a single analytical framework.
Let $\mathcal{S}$ denote the solver state (active node set, bounds, incumbent solution),
$\mathcal{A}$ the solver action space (branching, node selection, cut generation, parameter update),
and $\mathcal{T}$ the (typically deterministic) transition operator defining the solver tree’s evolution.
At each iteration, the solver executes a decision rule
\[
a_t = \pi_\theta(s_t), \qquad s_{t+1} = \mathcal{T}(s_t,a_t),
\]
where $\pi_\theta$ may be:
\begin{itemize}
    \item a \textbf{fixed heuristic} (classical {\tt BB}),
    \item a \textbf{supervised policy} trained from demonstrations (ML/IL),
    \item or a \textbf{reward-optimized policy} (RL).
\end{itemize}

Recent developments such as the graph-based probing controller in {\tt BARON}~\cite{BARON2025learning}
and the deterministic global MINLP framework in {\tt Gurobi}~11+~\cite{gurobi} illustrate that production-level solvers are beginning to incorporate learned components within this same formal hierarchy.

This unified formulation exposes three complementary learning levels:
\begin{enumerate}
    \item \textbf{Representation level:} Graph or tensor encodings of $\mathcal{S}$ (e.g., constraint–variable bipartite graphs processed by GNNs).
    \item \textbf{Decision level:} Learned policies $\pi_\theta$ mapping solver states to discrete or continuous actions.
\item \textbf{Integration level:} How $\pi_\theta$ interacts with the exact {\tt BB} recursion—either replacing a local heuristic (e.g., branching) or advising meta-parameters (e.g., node ordering, cut ranking) without compromising correctness.
\end{enumerate}

This integration level now extends beyond experimental prototypes: {\tt BARON}  \cite{BARON2025learning} demonstrates that ML-guided modules can coexist with certified global optimality, setting a precedent for future hybrid deterministic-learning architectures; further details are provided in Appendix \ref{app:solvers}.

Seen through this lens, the algorithms summarized in Sections~\ref{sec:ML}–\ref{sec:RL} correspond to different choices of $(\mathcal{S},\mathcal{A},\pi_\theta)$:
Gasse~et~al.~(2019) and Nair~et~al.~(2020) learn $\pi_\theta$ for branching;
Tang~et~al.~(2020) for cut addition;
He~et~al.~(2014) and Labassi~et~al.~(2022) for node ranking;
and Wang~et~al.~(2025) for adaptive parameter control.
Li~et~al.~(2021) approximate the value function $V(s)$ via surrogate modelling.
All these can be interpreted as minimizing a common performance objective,
\[
J(\theta) = \mathbb{E}_{s \sim p(\mathcal{S})}\!\big[R(\pi_\theta(s))\big],
\]
where $R$ quantifies solver efficiency (e.g., bound gap closure, node reduction, or runtime).

This unified view highlights potential cross-fertilization:
\begin{itemize}
    \item \textbf{Joint policies:} simultaneous learning of node and variable selection as a multi-agent or hierarchical RL problem;
    \item \textbf{Meta-learning:} transferring $\pi_\theta$ across heterogeneous problem classes;
    \item \textbf{Theoretical links:} interpreting $\pi_\theta$ as an adaptive bias in the classical {\tt BB} tree search, opening new avenues for convergence analysis.
\end{itemize}

By casting classical heuristics and learned policies into the common $(\mathcal{S},\mathcal{A},\pi_\theta)$ framework, this subsection establishes an analytical bridge that unifies exact and learning-augmented decision rules within branch-and-bound solvers, preserving the correctness of the underlying algorithm.

In practice, this unified view subsumes emerging hybrid solvers such as {\tt BARON}~\cite{BARON2025learning},
which combine deterministic global optimization with GNN-based inference, and {\tt Gurobi}~11+, which integrates global MINLP capabilities through deterministic spatial branching.

\paragraph{Joint Learning of Solver Decisions.}
A promising direction is to jointly learn multiple solver policies within a coordinated framework. 
For example, one policy may guide node selection while another governs branching, both sharing a common graph representation of the evolving search tree.
Multi-agent or hierarchical RL architectures can coordinate these policies, enabling information exchange between local (node-level) and global (tree-level) decision layers.
Such joint training promotes coherent search behaviour and aligns with emerging work on cooperative agents for solver orchestration.

\section{Current Challenges and Future Directions}\label{sec:limitations}

Despite remarkable methodological advances, the integration of ML and RL
into exact optimization solvers remains an active research frontier.
While learned components have demonstrated impressive performance on benchmark instances, their adoption in production-grade solvers is still limited.
The following aspects outline key challenges and emerging directions.

\paragraph{(a) Generalization and Distribution Shift.}
Many studies acknowledge that learned policies generalize reliably only within a narrow family of instances, lacking robustness under domain shift.
For example, \cite{gasse2019exact} and \cite{nair2020solving} train policies on small problem instances (e.g., \emph{Set Cover}) and evaluate them on larger instances of the same class.
\cite{labassi2022learning} and \cite{Huang2022} also report reduced cross-benchmark transfer.
Similarly, \cite{Triantafyllou2024} note sensitivity of their active-variable classifier to distributional changes.
Moreover, \cite{gasse2019exact} observe that, in some large instances, GNN inference cost can partially offset tree-size reductions.

\paragraph{(b) Runtime and Inference Overhead.}
Integrating deep neural architectures into the solver loop introduces nontrivial computational cost.
\cite{nair2020solving} report inference overhead from GNN-based branching policies,
and \cite{Huang2022} note that scoring hundreds of candidate cuts can outweigh the benefits
of faster convergence.
Lightweight surrogates, model pruning, or asynchronous inference mechanisms
offer promising remedies to maintain solver efficiency on large-scale problems.
Smaller models such as gradient-boosted trees~\cite{Triantafyllou2024} provide competitive trade-offs and may serve as gatekeepers for heavier neural modules.

\paragraph{(c) Safeguards and Fallbacks.}
Existing works rarely implement explicit safeguard mechanisms.
In all reviewed studies
(\cite{gasse2019exact,nair2020solving,tang2020reinforcement,labassi2022learning}),
the learned policies fully replace handcrafted heuristics during evaluation.
Future solvers will likely require hybrid fallback logic, where learned policies operate under confidence thresholds and revert to deterministic rules when uncertainty or suboptimality is detected.

\paragraph{(d) Integration with Commercial Solvers.}
To the best of our knowledge, none of the major commercial solvers—{\tt CPLEX}, {\tt Gurobi}, or {\tt SCIP}—
currently deploy ML-based branching, node, or cut-selection policies by default.
The methods proposed in \cite{gasse2019exact,nair2020solving,labassi2022learning}
have been implemented only within research extensions of {\tt SCIP}
or through external environments such as
\texttt{Ecole}~\cite{Ecole2020} and \texttt{PySCIPOpt}~\cite{PySCIPOpt}, rather than in official solver releases.
A partial exception is {\tt FICO~Xpress}~\cite{xpress},
which integrates a decision-tree heuristic at the root node to decide whether cutting planes should be generated beyond the root.
Barriers to broader adoption include the need for determinism,
numerical reproducibility, explainability, and certification standards required for industrial deployment.
Broader integration will also depend on standardized APIs for learned modules and verification protocols for machine-in-the-loop decisions.

\paragraph{(e) Data, Reproducibility, and Benchmarks.}
Although several works release open code and data
(\cite{gasse2019exact,labassi2022learning,tang2020reinforcement,Huang2022}),
there is still no standardized evaluation protocol for ML-augmented solvers.
Datasets such as \texttt{MIPLIB}, \texttt{MINLPLib}, and
\texttt{Ecole} provide a foundation but differ in scale, structure, and difficulty.
Recent community efforts such as {\tt Ecole} aim to standardize interfaces,
yet evaluation remains fragmented across solver versions and computing environments.
Future benchmarks should include unified metrics—runtime, node counts,
and generalization tests—together with reproducible pipelines and
publicly available trained models.

\paragraph{(f) Research Outlook.}
Emerging themes point toward deeper integration between learning and optimization:
(i) joint learning of branching and node selection
as suggested by \cite{he2014learning,labassi2022learning},
(ii) meta-learning across problem classes for better transferability
(\cite{nair2020solving,Triantafyllou2024}),
and (iii) differentiable surrogates for nonlinear relaxations
(\cite{Li2021,ghaddar2023spatialbranching}).
These research directions align with the unified $(\mathcal{S},\mathcal{A},\pi_\theta)$ framework discussed in Section~\ref{sec:unified},
promising to bridge the gap between academic prototypes
and robust hybrid solvers for large-scale MINLP and CNLP problems.

\section{Conclusion}

This work discussed a unified view of integrating ML and RL into exact algorithms for discrete optimization, focusing on integer, continuous, and mixed‑integer nonlinear formulations. Building on classical methods such as branch-and-bound, cutting planes, and relaxations, we demonstrated how learning-based methods can improve branching, cut selection, node exploration, and parameter tuning without compromising global optimality.

We highlighted core problem classes (INLP, CNLP, MINLP), illustrated their practical relevance in logistics, energy, and resource allocation, and introduced enhanced solver architectures ({\tt IBB}, {\tt CBB}, {\tt MIBB}) that combine theoretical rigor with data‑driven adaptability. Our review of state‑of‑the‑art methods revealed both the strengths of classical solvers and the gains offered by learning‑augmented strategies.

While ML-enhanced methods accelerate convergence in large or unstructured instances, classical solvers remain more robust when training data is scarce or when instance structures are highly regular. ML models also lack the certification guarantees that deterministic strategies ensure, making hybrid integration crucial.

By bridging optimization theory and modern AI, this study lays the groundwork for the next generation of intelligent, globally convergent solvers. Future research should further explore generalization, transfer across problem classes, and hybrid architectures for large‑scale applications, while ensuring rigorous empirical evaluation across benchmark datasets like MIPLIB~\cite{koch2011miplib} and MINLPLib~\cite{bussieck2003minlplib,minlplib}, which are widely adopted for reproducibility in optimization research.


\appendix

\section{Examples of Mixed Integer Optimization Problems}\label{app:exampleMIP}

Mixed Integer Optimization Problems (MIPs) are central to a wide array of decision-making tasks where variables may be continuous, integer-valued, or binary, and must satisfy a set of linear or nonlinear constraints. These problems are ubiquitous in industrial, logistical, and engineering applications due to their ability to model discrete decisions under complex operational requirements. Despite their broad applicability, MIPs are often computationally demanding, which has spurred substantial research into both exact and approximate solution methods. To ground our discussion, we now present several illustrative examples of MIPs that capture the modelling challenges and constraint structures commonly encountered in practice. Each example highlights a different application domain, emphasizing the richness and versatility of mixed-integer formulations.

\subsection{Crew Scheduling Problem}

The {\bf crew scheduling problem} aims to minimize the total cost associated with assigning crew members to shifts while satisfying all workload, demand, skill, and legal and union constraints. This problem can be formulated as
\[
\begin{array}{ll}
\min & f(\Sc) = \D\sum_{(i, j) \in \Sc} c_{ij} \\
\text{s.t.} & \Sc \in \mathcal{F},
\end{array}
\]
where \(\Sc\) is the subset of crew assignment, \(c_{ij}\) is the cost of assigning crew \(i\) to shift \(j\), and 
\[
\mathcal{F} := \{ \Sc \subseteq N \mid \text{workload, demand, skill, and legal and union constraints} \}.
\]
Here $N$ contains all potential crew-shift assignments represented as pairs $(i,j)$. 

Let us now describe the constraints $\mathcal{F}$. The first constraint is the set of {\bf demand constraints}
\[
\sum_{i \in \text{Crew}} x_{ij} \geq d_j, \quad \mbox{for all $j \in \text{Shifts}$,}
\]
where the variable \(x_{ij}\) takes the value 1 if a crew member \(i\) is assigned to shift \(j\), and 0 otherwise,  and \(d_j\) is the minimum number of crew members required for a shift \(j\). The second constraint is the set of {\bf workload constraints}
\[
\sum_{j \in \text{Shifts}} x_{ij} \leq L_i, \quad \mbox{for all $i \in \text{Crew}$,}
\]
where \(L_i\) is the maximum number of shifts that a crew member \(i\) can work. The crew member $i$ lacks the skills for the shift $j$ in the {\bf skill} matching constraints $x_{ij} = 0$ for all $(i, j)$, which is the third constraint. The fourth constraint is the {\bf legal and union constraint} 
\[
x_{ij} + x_{i(j+1)} \leq 1, \quad \mbox{for all $i \in \text{Crew}$ and $j \in \text{Shifts}$},
\]
where each crew member must have a minimum rest period between consecutive shifts. The working hours for each crew member must not exceed their allowable limits \(H_i\), i.e.,
\[
\sum_{j \in \text{Shifts}} w_j x_{ij} \leq H_i, \quad \mbox{for all $i \in \text{Crew}$,}
\]
which is the fifth constraint. Here, \(w_j\) denotes the duration of shift \(j\).

\subsection{Knapsack Problem}

The {\bf knapsack problem} aims to maximize the total value of items selected, while ensuring that a capacity constraint on the total weight or volume of the selected items is satisfied. It can be formulated as 
\begin{equation}\label{e.knaopt}
\begin{array}{ll}
\max & f(\Sc)=\D\sum_{i\in \Sc}v_i\\
\st  & \Sc \in \mathcal F:=\left\{\Sc \subseteq N \mid \D\sum_{i\in \Sc}a_i \le \ol a\right\}, \\
\end{array}
\end{equation}
where $\ol a$  is the weight capacity of the knapsack and $v_i$ is  the value of item $i$, and $a_i$ is the weight of item $i$.

\subsection{Vehicle Routing Problem}

The {\bf vehicle routing problem} aims to minimize the total cost of the routes taken by a fleet of vehicles while ensuring that several constraints ({\bf route} and {\bf capacity}) are satisfied. This problem can be formulated as
\[
\begin{array}{ll}
\min & f(\Sc) = \D\sum_{i,j \in \Sc} c_{ij} \\
\st & \Sc \in \mathcal{F} := \{ \Sc \subseteq E \mid \text{route and capacity constraints} \},
\end{array}
\]
where \(\Sc\) is the subset of edges \(E\) used in the solution and \(c_{ij}\) is the cost of traversing the edge \((i, j)\). Let us describe which constraints $\mathcal{F}$ contains. 

To begin with, each {\bf decision variable} 
  \[
  x_{ij} \in \{0, 1\}, \quad \mbox{for all $i, j \in \mathcal{N}$}
  \]
is binary, indicating whether a vehicle travels directly from customer \(i\) to customer \(j\).

The second constraint is the {\bf route coverage} ({\bf demand constraints})
\[
\sum_{i \in \mathcal{N}} x_{ij} = 1, \quad \mbox{for all $j \in \mathcal{C}$,} \quad \sum_{j \in \mathcal{N}} x_{ji} = 1, \quad \mbox{for all $i \in \mathcal{C}$,}
\]
where each customer must be visited exactly once and left exactly once by some vehicle. Here, \( \mathcal{N} \) is the set of all nodes (including the depot and customers) and \( \mathcal{C} \) is the set of customers. 

The third constraint is the {\bf subtour elimination constraint}
\[
u_i - u_j + (|\mathcal{C}| - 1) \cdot x_{ij} \leq |\mathcal{C}| - 2, \quad \mbox{for all $i \neq j$, for all $i, j \in \mathcal{C}$,}
\]
where auxiliary variables \( u_i \) are used to ensure the nonexistence of subtours. The fourth constraint is the {\bf vehicle capacity constraint}
\[
\sum_{j \in \mathcal{C}} d_j \cdot x_{ij} \leq Q, \quad \mbox{for all $i \in \mathcal{V}$,}
\]
where each vehicle's total load (the sum of customer demands) must not exceed its capacity \( Q \). Here, \( \mathcal{V} \) is the set of vehicles, \( Q \) is the capacity of each vehicle, and \( d_j \) is the demand of customer \( j \).

\subsection{Facility Location Problem}

The {\bf facility location problem} aims to minimize the total cost consisting of the fixed costs of opening facilities and the transportation costs for assigning customers to facilities while ensuring that facility constraints are satisfied.  This problem can be formulated as
\[
\begin{array}{ll}
\min & f(\Sc) = \D\sum_{i \in \Sc} c_i + \sum_{j \in C} \min_{i \in \Sc} d_{ij} \\
\text{s.t.} & \Sc \in \mathcal{F} = \{ \Sc \subseteq F \mid \text{facility constraints} \},
\end{array}
\]
where \(\Sc\) is the subset of facilities to open, $F$ is the total set of possible facilities, \(c_i\) is the fixed cost of opening the facility \(i\), and \(d_{ij}\) is the cost of serving a customer \(j\) from the facility \(i\). 

Again we use the binary decision variables, where each binary variable \( y_i \in \{0, 1\} \) indicates whether the facility \( i \) is open or not, and each binary variable \( x_{ij} \in \{0, 1\} \) indicates whether a customer \( j \) is assigned to the facility \( i \).

Let us describe the constraints $\mathcal{F}$ contains. The first constraint is the {\bf customer assignment constraint}
\[
\sum_{i \in F} x_{ij} = 1, \quad \mbox{for all $j \in \mathcal{C}$,}
\]
where each customer \( j \in \mathcal{C} \) must be assigned to exactly one facility. The second constraint is the {\bf facility opening constraint}
     \[
     y_i \geq x_{ij}, \quad \mbox{for all $i \in F$, $j \in \mathcal{C}$,}
     \]
where a facility \( i \) can only serve customers if it is open, which means that if a customer \( j \) is assigned to a facility \( i \) (i.e., \( x_{ij} = 1 \)), then the facility \( i \) must be open (i.e., \( y_i = 1 \)).

\subsection{Energy Grid Optimization}

The {\bf energy grid optimization} problem aims to minimize the total cost, containing both generation and transmission costs, while ensuring that the grid constraints, including power balance, generation limits, transmission limits, power flow relationships, binary facility assignments, and non-negativity, are satisfied. This problem can be formulated as
\[
\begin{array}{ll}
\min & f(\Sc)=\D\sum_{g \in G} F_g(P_g) + \sum_{l \in L} F_l(P_l)  \\
\text{s.t.} & \Sc \in \mathcal{F} = \{ \Sc \subseteq N \mid \text{power flow and grid constraints}\},
\end{array}
\]
where \( N \) is the set of all nodes (generation plants, consumer nodes, and transmission lines), \( G \subseteq N \) is the set of generation nodes (e.g., power plants),  \( L \subseteq N \times N \) is the set of transmission lines connecting generation plants and consumer nodes, \( P_g \) is the power generated at generation node \( g \in G \), \( P_l \) is the power flowing through transmission line \( l \in L \),  \( F_g(P_g) \) is the generation cost function at plant \( g \in G \),  and \( F_l(P_l) \) is the transmission cost function for line \( l \in L \). 

Let $x_{gl} \in \{0, 1\}$ for all $g \in G$, $l \in L$ be the set of binary decision variables, where \(x_{gl}\) is true if the transmission line \(l\) is used for transmitting power from generation plant \(g\), and false otherwise.

Let us describe the constraints that $\mathcal{F}$ contains. The first constraint is the {\bf power balance}
    \[
    \sum_{g \in G} P_g = \sum_{c \in C} D_c + \sum_{l \in L} P_l
    \]
whose goal is to ensure that the total power generated equals the total power demand plus the power transmitted,  where   \( C \subseteq N \) is the set of consumer nodes (e.g., load centers) and \( D_c \) is the power demand at a consumer node \( c \in C \). The second constraint is the {\bf generation limits}
    \[
    P_g^{\min} \leq P_g \leq P_g^{\max}, \quad \mbox{for all $g \in G$,}
    \]
whose goal is to ensure that the power generated at each plant is within its specified limits ($ P_g^{\min}$ and $P_g^{\max}$). The third constraint is  the {\bf transmission limits}
    \[
    P_l^{\min} \leq P_l \leq P_l^{\max}, \quad \mbox{for all $l \in L$,}
    \]
whose goal is to ensure that the power flow through each transmission line is within its specified limits ($ P_l^{\min}$ and $P_l^{\max}$). The fourth constraint is the {\bf power flow relationships}
    \[
    P_l = \sum_{g \in G} \alpha_{gl} P_g - \sum_{c \in C} \beta_{lc} D_c, \quad \mbox{for all $l \in L$}
    \]
 that  express the power flow through each transmission line \( l \) in terms of the generation at plant \( g \) and the demand at consumer node \( c \). Here \( \alpha_{gl} \) denotes the fraction of power generated at node \( g \in G \) transmitted through line \( l \in L \) and \( \beta_{lc} \) denotes the fraction of demand at consumer node \( c \in C \) supplied by transmission line \( l \in L \). 
 
 The fifth constraint is the {\bf facility allocation constraint}
    \[
    \sum_{g \in G} x_{gl} \leq 1, \quad \mbox{for all $l \in L$}
    \]
whose goal is to ensure that each transmission line is used by at most one generation plant. The sixth constraint is the non-{\bf negativity constraint}
\[
P_g \geq 0, \quad P_l \geq 0, \quad \mbox{for all $g \in G$, $l \in L$}
\]
whose goal is to ensure that power generation \(P_g\) and transmission \(P_l\) are non-negative.

\subsection{Hydropower Scheduling}

The {\bf hydropower scheduling} problem aims to optimize the operation of a hydropower system over a given planning horizon to minimize total operational costs (or maximize energy generation revenue), while satisfying physical, operational, and environmental constraints. This problem can be formulated as
\[
\begin{array}{ll}
\min & f(\Sc)=\D\sum_{t \in T} C_t(P_t) \\
\text{s.t.} & \Sc \in \mathcal{F} = \{\Sc \subseteq \Rz^{3|T|} \mid \text{hydrological and operational constraints} \},
\end{array}
\]
where $\Sc=(P_t, R_t, S_t)_{t \in T}$, \( T \) is the set of periods (e.g., hourly or daily intervals), \( P_t \) is the power generated, \( R_t \) is the water release, \( S_t \) is the reservoir storage at time \( t \in T \), and \( C_t(P_t) \) is the cost function (or negative revenue function) associated with generation at time \( t \). Here, the decision variables are
\[
P_t, R_t, S_t \in \Rz_{\geq 0}, \quad \mbox{for all $t \in T$.}
\]

Let us describe the constraints that $\mathcal{F}$ contains. The first constraint is the {\bf reservoir dynamics}
\[
S_{t+1} = S_t + I_t - R_t, \quad \mbox{for all $t \in T$,}
\]
whose goal is to ensure conservation of water volume in the reservoir system, where \( I_t \) denotes the natural inflow at time \( t \in T \). The second constraint is the {\bf generation relationship}
\[
P_t = \eta \cdot R_t \cdot H_t, \quad \mbox{for all $t \in T$},
\]
which relates the generated power to the water release and hydraulic head \( H_t \), where \( \eta \) is the conversion efficiency. The third constraint is the {\bf storage limits}
\[
S_t^{\min} \leq S_t \leq S_t^{\max}, \quad \mbox{for all $t \in T$,}
\]
whose goal is to ensure that the water level remains within physically feasible limits. The fourth constraint is the {\bf release limits}
\[
R_t^{\min} \leq R_t \leq R_t^{\max}, \quad \mbox{for all $t \in T$,}
\]
whose goal is to enforce operational and technical bounds on the water release. The fifth constraint is the {\bf environmental flow constraint}
\[
R_t \geq R_t^{\text{eco}}, \quad \mbox{for all $t \in T$,}
\]
whose goal is to ensure that a minimum ecological flow is maintained downstream of the dam. The sixth constraint is the {\bf non-negativity constraint}
\[
P_t \geq 0, \quad \mbox{for all $t \in T$,}
\]
whose goal is to ensure that power generation values are non-negative.

\section{Detailed Materials Supporting Section~\ref{sec:giopt}}
\label{app:giopt}

This section compiles all detailed algorithms, figures, proofs, and solver descriptions that were summarized in Section~\ref{sec:giopt}. The main text presents the concise conceptual and mathematical overview, while the full derivations and illustrations are moved here for completeness.

The appendix is organized as follows: 
Subsection~\ref{app:flowcharts} presents all flowcharts illustrating the core components of classical branch-and-bound ({\tt BB}) algorithms, including node selection, branching, and bounding logic. 
Subsection~\ref{app:algsteps} expands the algorithmic steps and procedural details that underpin the high-level formulations introduced earlier. 
Subsection~\ref{app:algsteps} details the cutting-plane, column-generation, and feasibility-pump mechanisms used to enhance bounding performance. 
Subsection~\ref{app:solvers} describes key MINLP solvers and their architectures, while 
Subsection~\ref{app:proofs} provides the theoretical convergence guarantees and analytical remarks.
Finally, Subsection~\ref{app:tables} summarizes solver performance and comparative results across benchmark problems. 
Together, these materials provide the full mathematical and algorithmic context supporting the concise discussion of Section~\ref{sec:giopt}.

\subsection{Flowcharts of Classical {\tt BB} Components}
\label{app:flowcharts}

\tikzstyle{startstop} = [rectangle, rounded corners, draw=black, fill=blue!10, minimum width=3cm, minimum height=1cm, text centered]
\tikzstyle{process} = [rectangle, draw=black, fill=orange!10, minimum width=4cm, minimum height=1cm, text centered]
\tikzstyle{decision} = [diamond, draw=black, fill=green!10, aspect=2, text centered, minimum width=3.5cm, minimum height=1.2cm]
\tikzstyle{arrow} = [thick,->,>=stealth]

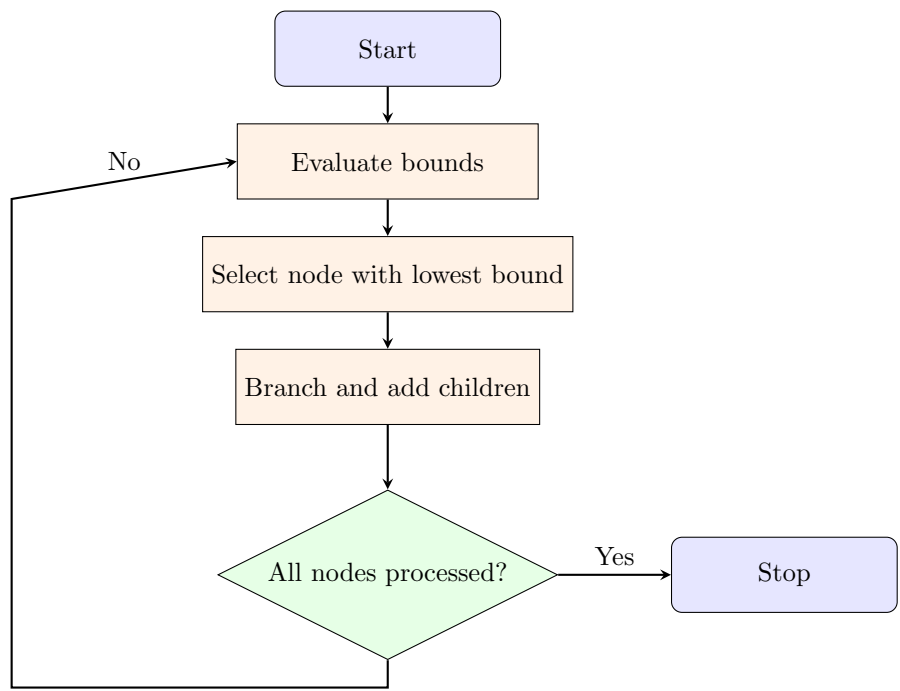
\begin{figure}[!htpp]
\begin{tikzpicture}[node distance=1.5cm and 3cm]

\node (start) [startstop] {Start};
\node (eval) [process, below of=start] {Evaluate bounds};
\node (select) [process, below of=eval] {Select node with lowest bound};
\node (branch) [process, below of=select] {Branch and add children};
\node (check) [decision, below of=branch,yshift=-1cm] {All nodes processed?};
\node (stop) [startstop, right=of check,xshift=-1.5cm] {Stop};

\draw [arrow] (start) -- (eval);
\draw [arrow] (eval) -- (select);
\draw [arrow] (select) -- (branch);
\draw [arrow] (branch) -- (check);
\draw [arrow] (check) -- node[above] {Yes} (stop);
\draw [arrow] (check) -- ++(0,-1.5) -- ++(-5,0) node[left] {} -- ++(0,6.5) -- (eval.west) node[midway, above] {No};

\end{tikzpicture}
\caption{Best-Bound Search (BBS): Selects the node with the lowest lower bound from the active pool. This is a best-first strategy.}\label{f.BBS}
\end{figure}

\begin{figure}[!http]
\centering
\begin{tikzpicture}[node distance=1.5cm and 3cm]
\node (start) [startstop] {Start};
\node (select) [process, below of=start] {Select last added node (LIFO)};
\node (branch) [process, below of=select] {Branch and add children};
\node (check) [decision, below of=branch,yshift=-1cm] {All nodes processed?};
\node (stop) [startstop, right=of check,xshift=-1.5cm] {Stop};

\draw [arrow] (start) -- (select);
\draw [arrow] (select) -- (branch);
\draw [arrow] (branch) -- (check);
\draw [arrow] (check) -- node[above] {Yes} (stop);
\draw [arrow] (check) -- ++(0,-1.5) -- ++(-5,0) -- ++(0,5.5) -- (select.west) node[midway, above] {No};
\end{tikzpicture}
\caption{Depth-First Search (DFS): Selects the most recently added node using a Last-In, First-Out (LIFO) stack. This explores one path deeply before backtracking.}\label{f.DFS}
\end{figure}
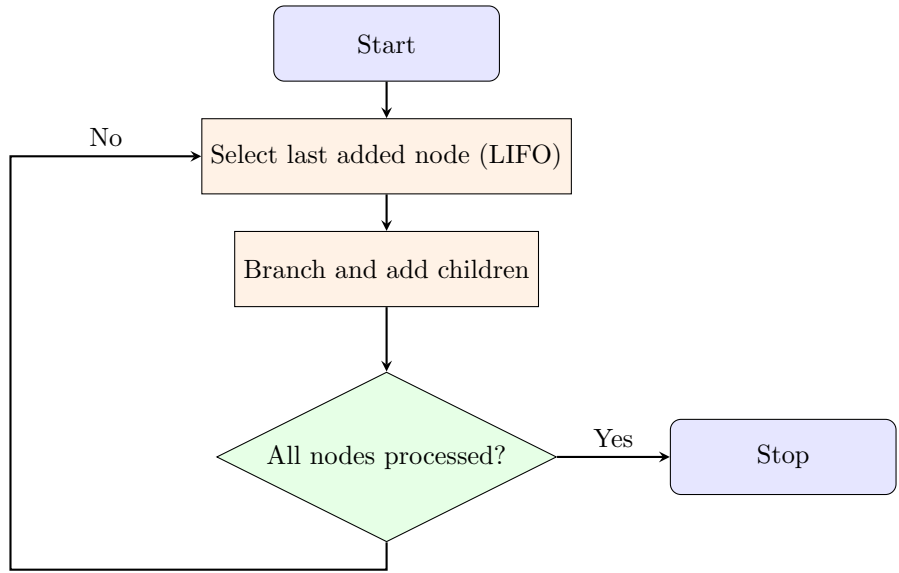

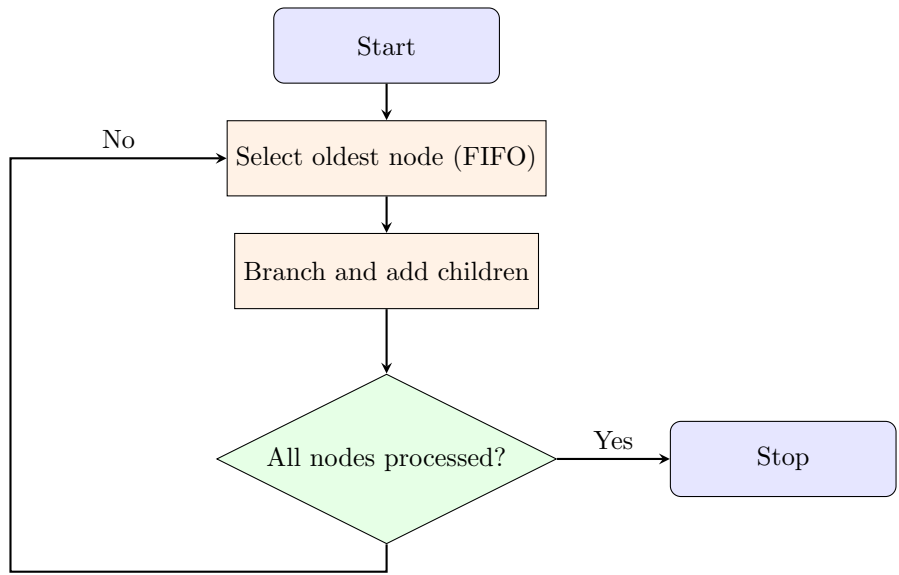
\begin{figure}[ht]
\centering
\begin{tikzpicture}[node distance=1.5cm and 3cm]
\node (start) [startstop] {Start};
\node (select) [process, below of=start] {Select oldest node (FIFO)};
\node (branch) [process, below of=select] {Branch and add children};
\node (check) [decision, below of=branch,yshift=-1cm] {All nodes processed?};
\node (stop) [startstop, right=of check,xshift=-1.5cm] {Stop};

\draw [arrow] (start) -- (select);
\draw [arrow] (select) -- (branch);
\draw [arrow] (branch) -- (check);
\draw [arrow] (check) -- node[above] {Yes} (stop);
\draw [arrow] (check) -- ++(0,-1.5) -- ++(-5,0) -- ++(0,5.5) -- (select.west) node[midway, above] {No};
\end{tikzpicture}
\caption{Breadth-First Search (BFS): Selects the oldest node using a First-In, First-Out (FIFO) queue. This explores all nodes at the current depth before going deeper.}\label{f.BFS}
\end{figure}

\tikzstyle{startstop} = [rectangle, rounded corners, draw=black, fill=blue!10, minimum width=3.5cm, minimum height=1cm, text centered]
\tikzstyle{process} = [rectangle, draw=black, fill=orange!10, minimum width=4cm, minimum height=1cm, text centered]
\tikzstyle{arrow} = [thick,->,>=stealth]

\begin{figure}[ht]
\centering
\begin{tikzpicture}[node distance=1.5cm and 3cm]

\node (start) [startstop] {Start: $\mathcal{S}_0 := C_{\cont}$};
\node (select) [process, below of=start] {Select region $\mathcal{S}_k$ (Bounding)};
\node (update) [process, below of=select] {Update $\mathcal{S}_k$ (Pruning)};
\node (branch) [process, below of=update] {Split $\mathcal{S}_k$ (Branching)};
\node (next) [startstop, below of=branch] {Proceed to next step or region};

\draw [arrow] (start) -- (select);
\draw [arrow] (select) -- (update);
\draw [arrow] (update) -- (branch);
\draw [arrow] (branch) -- (next);

\end{tikzpicture}
\caption{Evolution of the Feasible Region $\mathcal{S}_k$: Starting from continuous relaxation, each region is selected, updated, and possibly split.}\label{f.FeasReg}
\end{figure}
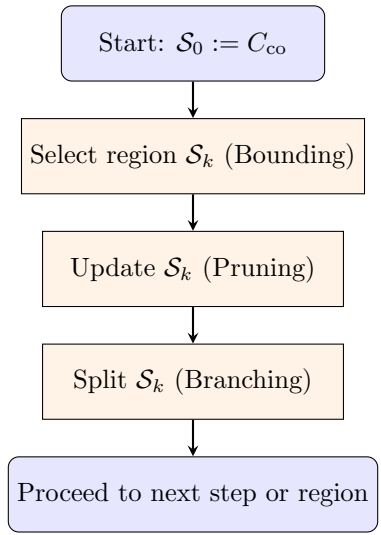

\begin{figure}[ht]
\centering
\begin{tikzpicture}[node distance=1.5cm and 3cm]

\node (init) [startstop] {Initialize: $\mathcal{B} := \{\mathcal{S}_0\}$};
\node (select) [process, below of=init] {Select $\mathcal{S}_k \in \mathcal{B}$};
\node (prune) [process, below of=select] {Pruning: Remove subregions from $\mathcal{B}$};
\node (branch) [process, below of=prune] {Branching: Split $\mathcal{S}_k$ and add to $\mathcal{B}$};
\node (loop) [startstop, below of=branch] {Repeat until termination};

\draw [arrow] (init) -- (select);
\draw [arrow] (select) -- (prune);
\draw [arrow] (prune) -- (branch);
\draw [arrow] (branch) -- (loop);

\end{tikzpicture}
\caption{Subregions and Branching Set: $\mathcal{B}$ evolves by pruning unpromising regions and adding new subregions from branching.}\label{f.branchSet}
\end{figure}
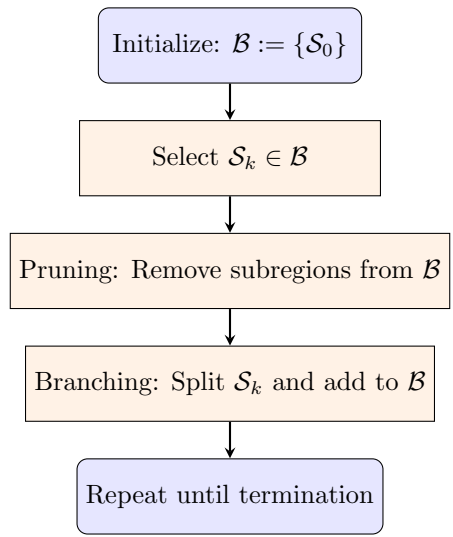

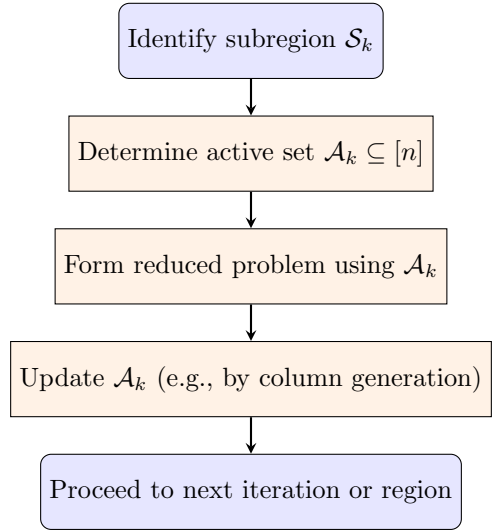
\begin{figure}[ht]
\centering
\begin{tikzpicture}[node distance=1.5cm and 3cm]

\node (identify) [startstop] {Identify subregion $\mathcal{S}_k$};
\node (active) [process, below of=identify] {Determine active set $\mathcal{A}_k \subseteq [n]$};
\node (restrict) [process, below of=active] {Form reduced problem using $\mathcal{A}_k$};
\node (update) [process, below of=restrict] {Update $\mathcal{A}_k$ (e.g., by column generation)};
\node (loop) [startstop, below of=update] {Proceed to next iteration or region};

\draw [arrow] (identify) -- (active);
\draw [arrow] (active) -- (restrict);
\draw [arrow] (restrict) -- (update);
\draw [arrow] (update) -- (loop);

\end{tikzpicture}
\caption{Active set $\mathcal{A}_k$ defines a reduced problem for subregion $\mathcal{S}_k$ and may evolve via column generation, which is discussed in Subsection \ref{app:algsteps}, below.}\label{f.activeSet}
\end{figure}

\tikzstyle{startstop} = [rectangle, rounded corners, draw=black, fill=blue!10, minimum width=5cm, minimum height=1cm, text centered]
\tikzstyle{process} = [rectangle, draw=black, fill=orange!10, minimum width=5.5cm, minimum height=1cm, text centered]
\tikzstyle{decision} = [diamond, draw=black, fill=green!10, aspect=2, text centered, minimum width=3.5cm, minimum height=1.2cm]
\tikzstyle{arrow} = [thick,->,>=stealth]

\begin{figure}[ht]
\centering
\begin{tikzpicture}[node distance=1.5cm and 3cm]

\node (start) [startstop] {Start with subregion $\mathcal{S}_k$};
\node (solve) [process, below of=start] {Solve relaxed problem: $x_k^* = \D\Argmin_{x \in \mathcal{S}_k \cap C_{\cont}} f(x)$};
\node (check) [decision, below of=solve, yshift=-0.5cm] {Is $x_k^*$ fractional?};
\node (addcut) [process, right of=check, xshift=4.6cm] {Add cut: $\alpha^T x > \beta$};
\node (restrict) [process, below of=addcut] {Restrict support: $\mathrm{supp}(x) \subseteq \mathcal{A}_k$};
\node (end) [startstop, below of=check, yshift=-3cm] {Continue with refined problem};

\draw [arrow] (start) -- (solve);
\draw [arrow] (solve) -- (check);
\draw [arrow] (check) -- node[above] {Yes} (addcut);
\draw [arrow] (addcut) -- (restrict);
\draw [arrow] (restrict) -- ++(0,-1.5) -- ++(-6,0) -- (end);
\draw [arrow] (check) -- node[left] {No} (end);

\end{tikzpicture}
\caption{Relaxed Subproblem: Solving the continuous relaxation, refining with a cut if fractional, and restricting with the active set $\mathcal{A}_k$.}\label{f.relaxsub}
\end{figure}
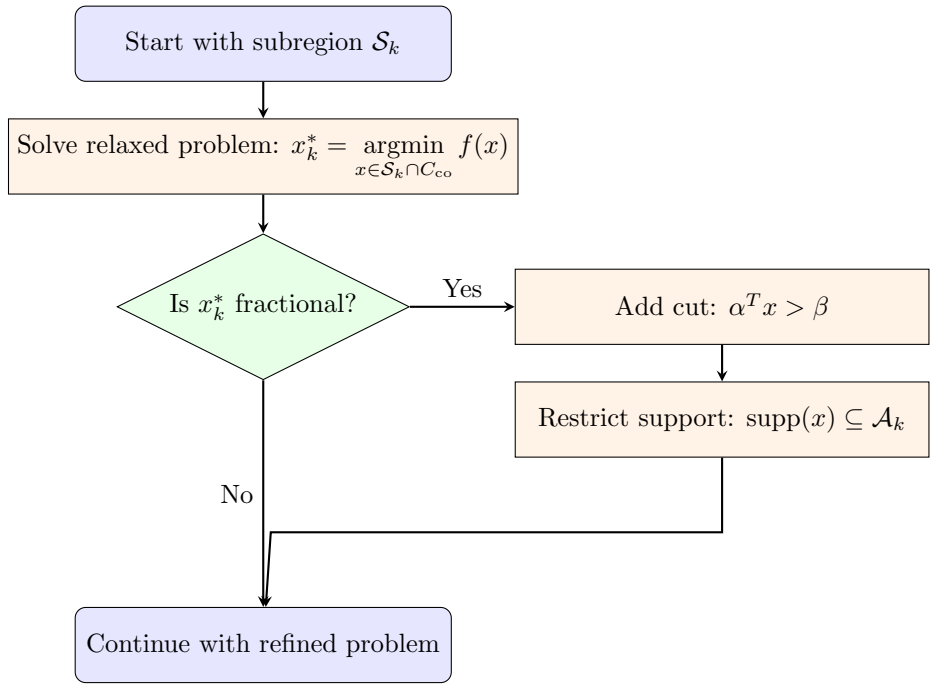

\tikzstyle{startstop} = [rectangle, rounded corners, draw=black, fill=blue!10, minimum width=5cm, minimum height=1cm, text centered]
\tikzstyle{process} = [rectangle, draw=black, fill=orange!10, minimum width=5.8cm, minimum height=1cm, text centered]
\tikzstyle{decision} = [diamond, draw=black, fill=green!10, aspect=2, text centered, minimum width=3.5cm, minimum height=1.2cm]
\tikzstyle{arrow} = [thick,->,>=stealth]

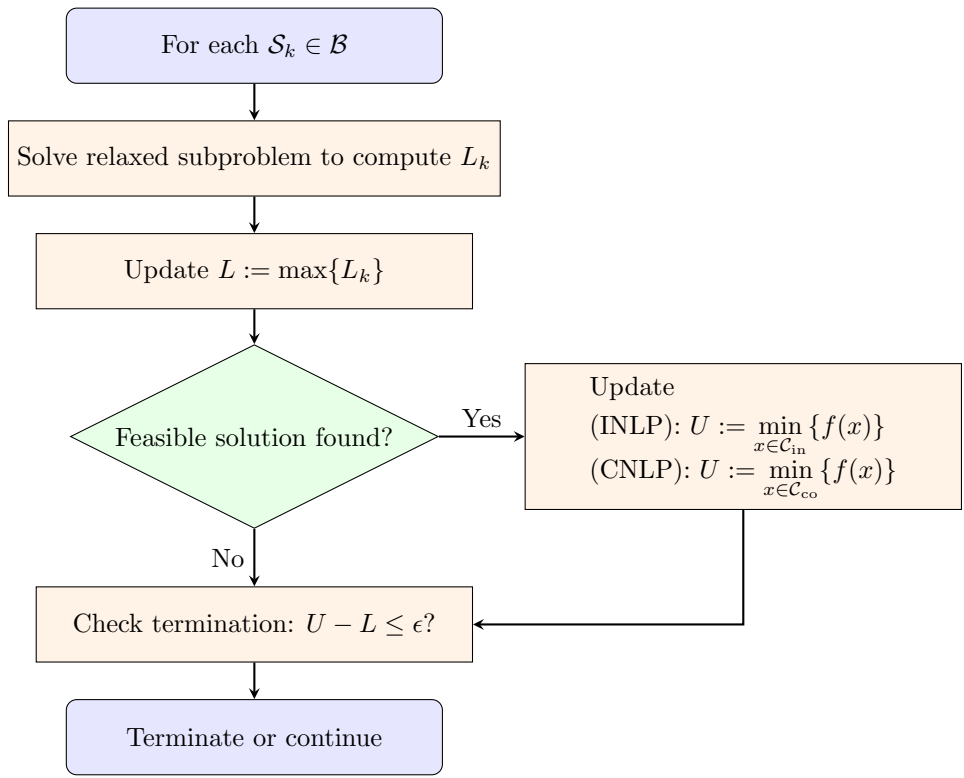
\begin{figure}[ht]
\centering
\begin{tikzpicture}[node distance=1.5cm and 3.2cm]

\node (start) [startstop] {For each $\mathcal{S}_k \in \mathcal{B}$};
\node (solveLk) [process, below of=start] {Solve relaxed subproblem to compute $L_k$};
\node (updateL) [process, below of=solveLk] {Update $L := \max\{L_k\}$};
\node (feas) [decision, below of=updateL, yshift=-0.7cm] {Feasible solution found?};

\node (inlpU) [process, right of=feas, xshift=5cm] {\begin{tabular}{l}
Update\vspace{0.1cm}\\
(INLP): $U := \D\min_{x \in \mathcal{C}_{\inte}}\{f(x)\}$\\ 
(CNLP): $U := \D\min_{x \in \mathcal{C}_{\cont}}\{f(x)\}$
\end{tabular}};

\node (checkgap) [process, below of=feas, yshift=-1cm] {Check termination: $U - L \leq \epsilon$?};
\node (stop) [startstop, below of=checkgap] {Terminate or continue};

\draw [arrow] (start) -- (solveLk);
\draw [arrow] (solveLk) -- (updateL);
\draw [arrow] (updateL) -- (feas);
\draw [arrow] (feas) -- node[above] {Yes} (inlpU);
\draw [arrow] (inlpU) |- (checkgap);
\draw [arrow] (feas) -- node[left] {No} (checkgap);
\draw [arrow] (checkgap) -- (stop);

\end{tikzpicture}
\caption{Unified Bound Update Logic for INLP and CNLP: Compute subproblem bounds $L_k$ and update global bounds $L$ and $U$ based on feasible solutions.}\label{f.LU}
\end{figure}

\tikzstyle{startstop} = [rectangle, rounded corners, draw=black, fill=blue!10, minimum width=4.5cm, minimum height=1cm, text centered]
\tikzstyle{process} = [rectangle, draw=black, fill=orange!10, minimum width=5.5cm, minimum height=1cm, text centered]
\tikzstyle{decision} = [diamond, draw=black, fill=green!10, aspect=2, text centered, minimum width=3.5cm, minimum height=1.2cm]
\tikzstyle{arrow} = [thick,->,>=stealth]

\begin{figure}[ht]
\centering
\begin{tikzpicture}[node distance=1.5cm and 3cm]

\node (start) [startstop] {\begin{tabular}{l}Initialization step:\\ $y_{\best} := y_0$,  $f_{\best}=f(y_0)$; \end{tabular}};
\node (feas) [process, below of=start] {\begin{tabular}{l}
Feasibility Step:\\
Solve $\phi(g(y), h(y))$
\end{tabular}};
\node (update1) [process, right of=feas,xshift=4.55cm] {Update $y_{\best}$ if feasible and better};
\node (improve) [process, below of=update1] {\begin{tabular}{l}
Improvement Step:\\
Solve $f(y)$ with $\phi \leq \tau_\ell$
\end{tabular}};
\node (update2) [process, left of=improve,xshift=-5cm] {Update $y_{\best}$ if improved};
\node (updateparams) [process, below of=update2] {\begin{tabular}{l}
Update parameters: \\
$\tau_{\ell+1} := \beta \tau_\ell$, $\ell := \ell+1$\end{tabular}};
\node (check) [decision, below of=updateparams, yshift=-0.7cm] {Termination condition met?};
\node (finalupdate) [process, right of=check, xshift=4.55cm] {Set $x_\ell^* := y_{\best}$, $L_\ell:= f(y_{\best})$};
\node (stop) [startstop, below of=check, yshift=-0.8cm] {End {\tt iFP}};
\draw [arrow] (start) -- (feas);
\draw [arrow] (feas) -- (update1);
\draw [arrow] (update1) -- (improve);
\draw [arrow] (improve) -- (update2);
\draw [arrow] (update2) -- (updateparams);
\draw [arrow] (updateparams) -- (check);
\draw [arrow] (check) -- node[above] {Yes} (finalupdate);
\draw [arrow] (finalupdate) |- (stop);
\draw [arrow] (check) -- node[left] {No} (stop);

\end{tikzpicture}
\caption{{\tt iFP} Algorithm: Iterative procedure alternating feasibility and improvement steps to find a high-quality integer feasible point. Here $\phi=\phi(g(y), h(y))$ measures the violation of equality and inequality constraints $g(y) = 0$ and $h(y) \leq 0$.}\label{f.iFP}
\end{figure}
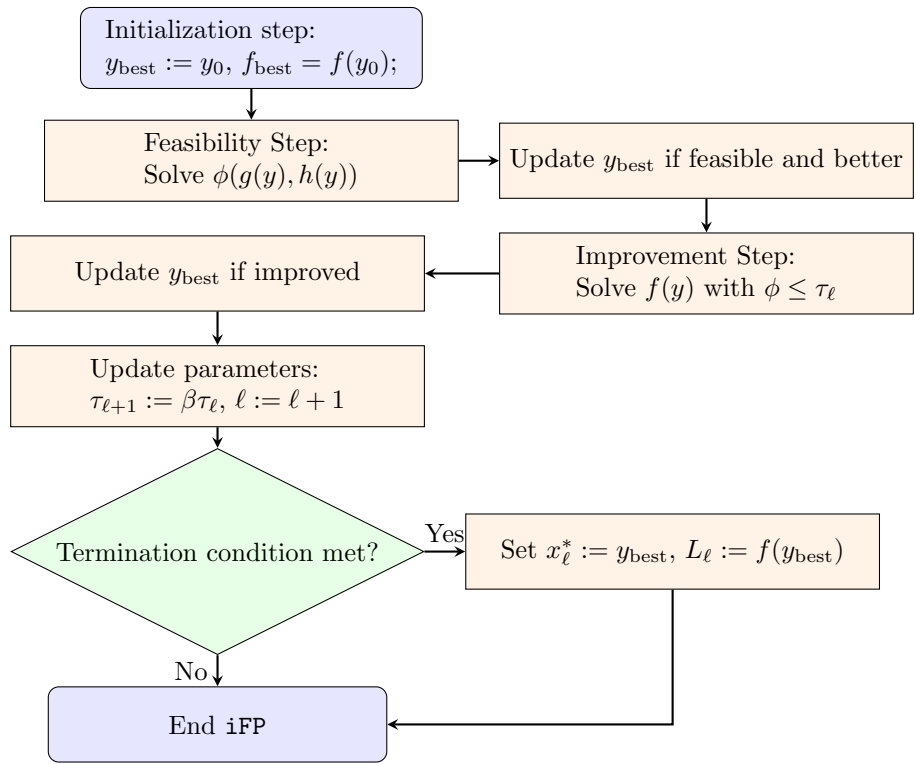

\clearpage
\subsection{Expanded Algorithmic Steps and Procedural Details}
\label{app:algsteps}

The three techniques {\bf cutting plane} ({\tt CP}), {\bf column generation} ({\tt CG}), and {\bf feasibility pump} ({\tt FP}) can improve the bounding step of a {\tt BB} algorithm to solve CNLP and INLP problems: 
\begin{itemize}
    \item {\tt CP} refines the relaxation and provides stronger bounds so that the number of branches to be explored is reduced, leading to low computational cost.
\item {\tt CG} handles large-scale problems by reducing the space of variables. 
\item {\tt FP} finds a high-quality solution to a stronger optimization problem.
\end{itemize}

Several state-of-the-art solvers use these techniques:
\begin{itemize}
    \item  {\tt CPLEX} of IBM \cite{cplex}, {\tt Gurobi} of Gurobi Optimization Group \cite{gurobi}, and {\tt SCIP} of SCIP Optimization Suite  \cite{SCIP} use various integer and continuous {\tt BB} algorithms together with {\tt CP} and {\tt FP}.
    \item Both {\tt COIN-OR} \cite{Fourer1990} and {\tt AMPL} \cite{LougeeHeimer2003} provide strong foundations for implementing {\tt CG}.
   
\end{itemize}

We denote an integer {\tt CP} by {\tt iCP} and its continuous version by {\tt cCP}, an integer {\tt CG} by {\tt iCG} and its continuous version by {\tt cCG}, and an integer {\tt FP} by {\tt iFP} and its continuous version by {\tt cFP}.

Here we describe how {\tt iFP} works. Figure \ref{f.iFP} shows a flowchart for {\tt iFP}. If the solution $x^*_{k}$ of the relaxed problem \eqref{e.Lupdate_NLP} is not an integer, {\tt iFP} is performed in a finite number of iterations to find a high-quality solution of the stronger optimization problem \eqref{e.solrelcutAc}.  The main steps of {\tt iFP} are {\bf feasibility step}, {\bf improvement step}, and {\bf updating parameters}. {\tt iFP} alternately performs these steps until a high-quality solution of the stronger optimization problem \eqref{e.solrelcutAc} is found:\\
\pt {\bf Initialization step} of {\tt iFP}. An initial integer point $y_0 \in C_{\inte}$ (e.g., obtained by relaxing integer constraints or randomly rounding a continuous solution) and an initial feasibility threshold $\tau_0 > 0$ are chosen. The iteration counter $\ell := 0$,  $y_{\best} := y_0$, and $f_{\best}=f(y_0)$  are initialized. \\
\pt {\bf Feasibility step} of {\tt iFP}. The {\bf feasibility subproblem} 
\[
y_{\ell+1} := \Argmin\limits_y \; \phi(g(y), h(y)), \quad \st \quad  y_i \in s_i \mathbb{Z}\quad \mbox{for all}\quad i\in[n]
\]
is formed and solved to minimize constraint violations. Here $y_i \in s_i \mathbb{Z}$ imposes integer constraints for each variable and $\phi(g(y), h(y))$ measures the violation of equality and inequality constraints $g(y) = 0$ and $h(y) \leq 0$. If $y_{\ell+1} \in C_{\inte}$ and $f(x_{\ell+1}) < f(y_{\best})$, $y_{\best} := y_{\ell+1}$ is updated. \\
\pt {\bf improvement step} of {\tt iFP}. The improvement subproblem 
\[
\ol y_{\ell+1} := \Argmin\limits_y \; f(y), \quad \st \quad \phi(g(y), h(y)) \leq \tau_\ell, \ \ y_i \in s_i \mathbb{Z}  \  \ \mbox{for all}  \  \ i\in[n]
\]
is solved. If $\ol y_{\ell+1} \in C_{\inte}$ and $f(\ol y_{\ell+1}) < f(y_{\best})$, $y_{\best} := \ol y_{\ell+1}$ is updated. In the update parameters step,  the feasibility tolerance  $\tau_{\ell+1} := \beta \tau_\ell$ is updated, for some $0 < \beta < 1$ to progressively tighten the tolerance. Then,  $\ell := \ell+1$ is updated. The algorithm is terminated if $y_{\best} \in C_{\inte}$ with $\phi(g(y_{\best}), h(y_{\best})) \leq \epsilon$, or no significant improvement in $f(y)$ is observed. If {\tt iFP} yields an improved integer-feasible point, the algorithm updates $x_\ell^* := y_{\best}$ and $L_\ell := f(y_{\best})$.

The same applies to {\tt cFP}, with the distinction that when a relaxed problem is infeasible, {\tt cFP} can still be employed to discover an improved feasible solution.

Recent work further explores differentiable and learning-assisted feasibility pumps, 
e.g., Cacciola~et al.~\cite{cacciola2025differentiable}, which integrate gradient-based learning into classical {\tt FP} heuristics.

\subsection{Software for MINLP}
\label{app:solvers}

This section discusses five state-of-the-art solvers for the MINLP problem \eqref{e.MINLP}: {\tt BARON} \cite{BARON}, {\tt ANTIGONE} \cite{Misener2014}, {\tt LINDO} \cite{cunningham2004lingo}, {\tt Couenne} \cite{Belotti2013}, and {\tt KNITRO} \cite{Byrd2006}. Among them, {\tt BARON}, {\tt ANTIGONE}, {\tt LINDO}, and {\tt Couenne} are global solvers that combine local and global methods, while {\tt KNITRO} is a local solver. Except for {\tt KNITRO}, which operates as a stand-alone CNLP/MINLP solver embedding its own local optimization routines, the other solvers rely on external solvers such as {\tt IPOPT} of W{\"a}chter \& Biegler \cite{wachter2006implementation} or {\tt SNOPT} of Gill et al. \cite{gill2005snopt} to solve NLP subproblems and {\tt CPLEX} of IBM \cite{cplex}, {\tt Gurobi} of Gurobi Optimization Group \cite{gurobi}, {\tt Xpress} of FICO \cite{xpress}, or {\tt GLPK} of Makhorin \cite{glpk} to solve LP and MILP subproblems. Recently, Zhang \& Sahinidis \cite{BARON} provided an extensive numerical comparison among many state-of-the-art optimization solvers (including the above-mentioned solvers) for solving CNLP, INLP, and MINLP problems, and proposed a learning-based framework using graph convolutional networks to dynamically deactivate probing in {\tt BARON}, yielding significant reductions in solution time across benchmark MINLP libraries.

\paragraph{CNLP solvers.}
In the case of CNLP, where no integer variables are involved, the goal is to efficiently solve nonlinear programs subject to general nonlinear constraints. Solvers such as {\tt IPOPT}, {\tt SNOPT}, {\tt KNITRO}, and {\tt CONOPT} are widely used for this purpose, either as stand-alone solvers or as subroutines within larger MINLP frameworks. 

{\tt IPOPT} implements an interior-point method that combines a line search strategy with a filter technique to ensure globalization and robust convergence. {\tt SNOPT} is based on sequential quadratic programming (SQP), which solves a sequence of quadratic programming subproblems. It uses a reduced-Hessian active-set method to exploit problem structure and sparsity, a line search for convergence, and a limited-memory quasi-Newton approach to approximate the Hessian of the Lagrangian.

For solving LP and MIP, solvers such as {\tt CPLEX}, {\tt Gurobi}, {\tt Xpress}, and {\tt GLPK} are often used as components within both CNLP and MINLP solvers. {\tt CPLEX}, {\tt Gurobi}, and {\tt Xpress} support various methods for LPs, including primal simplex, dual simplex, and barrier (interior-point) methods, as well as advanced {\tt BB} algorithms for MIPs. Although they differ in implementation details and performance tuning, they are capable of solving large-scale LP and MIP instances efficiently. In contrast, {\tt GLPK} relies on a primal simplex method for LPs and a basic {\tt BB} approach for MIPs, making it more suitable for small- to medium-scale problems.

These solvers are critical not only for the direct solution of CNLPs and LP/MIP problems but also as building blocks for hybrid solvers that address more complex MINLP problems.

\paragraph{An Exact Mixed-Integer Algorithm.} 
Algorithm \ref{a.miAlg} is a generic framework for solving the MINLP problem \eqref{e.MINLP}. It has three steps: reformulation of input, detection of special structure, and applying local or global optimization algorithms to alternately find the local or global minimizer of the MINLP problems.

\begin{algorithm}[!http]
\caption{{\bf A Generic Framework for the MINLP Problem \eqref{e.MINLP}}}\label{a.miAlg}
\begin{algorithmic}
\vspace{0.1cm} 
\STATE 
\begin{tabular}[l]{|l|}
\hline
\textbf{Reformulation of input:}\\
\hline
\end{tabular}
\vspace{0.1cm} 
(S0$_{\Alg_{\ref{a.miAlg}}}$) Convert the input problem into a standardized or enhanced form.\vspace{0.1cm} 
\vspace{0.1cm} 
\STATE 
\begin{tabular}[l]{|l|}
\hline
\textbf{Detection of special structure:}\\
\hline
\end{tabular}
\vspace{0.05cm} 
(S1$_{\Alg_{\ref{a.miAlg}}}$) Find aspects of the problem (e.g., convexity, sparsity, and separability) to use for specific optimization methods.
\REPEAT

\vspace{0.1cm} 
\STATE 
\begin{tabular}[l]{|l|}
\hline
\textbf{Applying optimization algorithms:}\\
\hline
\end{tabular}
\vspace{0.1cm} 
(S2$_{\Alg_{\ref{a.miAlg}}}$) Call local or global optimization algorithms to solve the MINLP problem \eqref{e.MINLP}.
\vspace{0.2cm} 
\UNTIL {the stopping criterion is met}
\end{algorithmic}
\end{algorithm}

\paragraph{{\tt BARON}.} 
{\tt BARON} performs the steps (S0$_{\Alg_{\ref{a.miAlg}}}$) and (S1$_{\Alg_{\ref{a.miAlg}}}$) of Algorithm~\ref{a.miAlg}, and then alternately executes step (S2$_{\Alg_{\ref{a.miAlg}}}$) until a global minimizer is found. 
In (S0$_{\Alg_{\ref{a.miAlg}}}$), all integer variables are relaxed to continuous bounds to obtain an initial relaxation; an automatic algebraic reformulation is carried out to eliminate redundancies, identify equivalences, and simplify the model. 
All nonconvex terms are replaced by tight convex relaxations derived from factorable reformulations, and interval arithmetic is employed to refine variable bounds and verify feasibility. 
In (S1$_{\Alg_{\ref{a.miAlg}}}$), {\tt BARON} analyzes the convexity or nonconvexity of functions, detects decomposability, classifies constraints by type, and identifies dependencies among variables. 
In (S2$_{\Alg_{\ref{a.miAlg}}}$), a branch-and-bound algorithm is applied, augmented with domain-reduction techniques, convex relaxations for lower bounds, interval analysis for bound tightening, and heuristics and cutting planes to improve upper bounds and feasibility. 
{\tt BARON} interfaces with external solvers such as {\tt CPLEX}, {\tt Gurobi}, and {\tt Xpress} to solve LP and MILP relaxations, and {\tt IPOPT}, {\tt CONOPT}, or {\tt KNITRO} to solve NLP subproblems~\cite{BARON}. 
It provides deterministic guarantees of global optimality and has demonstrated state-of-the-art performance across standard MINLP benchmark libraries.

Recent developments by Zhang \& Sahinidis~\cite{BARON2025learning} extended {\tt BARON} by integrating a machine-learning-based module that uses graph convolutional networks (GCNs) to adaptively control the use of probing---a key domain-reduction technique. 
The GCN represents each MINLP instance as a tripartite graph of variables, constraints, and nonlinear operators, learning when to deactivate probing to balance computational cost and tightening efficiency. 
This learning-guided probing policy achieves significant average speedups (approximately 40\% for MINLPs and 25\% for NLPs) while preserving {\tt BARON}’s robustness and deterministic global optimization guarantees.

\paragraph{{\tt ANTIGONE}.} 
This solver performs the steps (S0$_{\Alg_{\ref{a.miAlg}}}$) and (S1$_{\Alg_{\ref{a.miAlg}}}$) of Algorithm~\ref{a.miAlg}, and then alternately executes step (S2$_{\Alg_{\ref{a.miAlg}}}$) until a global minimizer is found. 
In (S0$_{\Alg_{\ref{a.miAlg}}}$), an automatic reformulation decomposes the problem into simpler forms, removes redundant constraints and variables, and reformulates the objective function and constraints to expose convex, linear, or separable structures. 
All nonconvex functions are replaced by convex underestimators and concave overestimators, including McCormick and outer-approximation relaxations for bilinear, quadratic, trilinear, and signomial terms. 
The feasible region is reduced by bound-tightening heuristics, and initial dual bounds based on convex relaxations are computed to guide the optimization process. 
In (S1$_{\Alg_{\ref{a.miAlg}}}$), {\tt ANTIGONE} determines whether each function is convex, concave, or nonconvex; detects decomposability for independent subproblems; replaces nonlinear expressions with suitable relaxations or piecewise-linear approximations; classifies constraints for tailored treatment; identifies variable dependencies for branching; and adds custom cutting planes to strengthen relaxations. 
In (S2$_{\Alg_{\ref{a.miAlg}}}$), a branch-and-bound algorithm enhanced with cutting planes and validated interval arithmetic is applied to ensure global optimality. 
{\tt ANTIGONE} interfaces {\tt CPLEX} for LP and MIP relaxations, {\tt CONOPT} or {\tt SNOPT} for local NLP subproblems, and {\tt Boost} for validated interval arithmetic~\cite{Misener2014}.

\paragraph{{\tt KNITRO}.} 
This solver performs the steps (S0$_{\Alg_{\ref{a.miAlg}}}$) and (S1$_{\Alg_{\ref{a.miAlg}}}$) of Algorithm~\ref{a.miAlg}, and iteratively applies local optimization algorithms in (S2$_{\Alg_{\ref{a.miAlg}}}$) to obtain a local minimizer of the MINLP problem. 
In (S0$_{\Alg_{\ref{a.miAlg}}}$), all variables and constraints are automatically scaled to improve numerical stability, the objective and constraint functions are normalized to ensure consistent gradient behavior, and variable bounds are tightened to enhance computational efficiency. 
In (S1$_{\Alg_{\ref{a.miAlg}}}$), the solver identifies convex or nonconvex components, exploits sparsity in the Jacobian and Hessian matrices, and uses efficient sparse data structures to reduce memory and time requirements for large-scale problems. 
It also detects integer and binary variables, dynamically integrating them within a branch-and-bound framework that combines continuous NLP optimization with discrete search. 
In (S2$_{\Alg_{\ref{a.miAlg}}}$), an interior-point (barrier) algorithm is used by default to solve the continuous relaxation, while active-set and trust-region methods are available as alternatives. 
The active-set method identifies active constraints and solves a sequence of quadratic programming subproblems, whereas the trust-region method constructs quadratic models whose solutions are restricted to controlled step sizes to ensure stability. 
To solve MINLPs, {\tt KNITRO} applies a branch-and-bound algorithm in conjunction with its continuous optimization engines. 
{\tt KNITRO} is a stand-alone solver for CNLP and MINLP problems and does not rely on external solvers~\cite{Byrd2006}.

\paragraph{{\tt LINDO}.} 
This solver also performs the steps (S0$_{\Alg_{\ref{a.miAlg}}}$) and (S1$_{\Alg_{\ref{a.miAlg}}}$) of Algorithm~\ref{a.miAlg} and then alternately executes step (S2$_{\Alg_{\ref{a.miAlg}}}$) until a global minimizer is found. 
In (S0$_{\Alg_{\ref{a.miAlg}}}$), as with the previously described solvers, variables and constraints are automatically scaled to improve numerical conditioning, and presolve procedures aggregate redundant constraints and simplify the model to reduce its size. 
Integer and nonlinear constraints may be relaxed, and nonlinear or nonsmooth functions are automatically replaced by mathematically equivalent linear approximations through LINDO’s built-in linearization tools. 
In (S1$_{\Alg_{\ref{a.miAlg}}}$), the problem structure is analyzed to determine convexity, nonconvexity, linearity, and sparsity, enabling the selection of the most efficient algorithm. 
In (S2$_{\Alg_{\ref{a.miAlg}}}$), interior-point or simplex methods are used to solve LP subproblems, while SQP or interior-point algorithms are applied to NLPs. 
For MINLPs, a branch-and-bound algorithm with convex and interval relaxations is employed, combining continuous optimization (SQP or barrier methods) with discrete search. 
Modern versions (e.g., {\tt LINGO}~21 and later) also include built-in global, multistart, and stochastic solvers, allowing full MINLP solution without external engines~\cite{cunningham2004lingo}.

\paragraph{{\tt Couenne}.}
This solver also performs the steps (S0$_{\Alg_{\ref{a.miAlg}}}$) and (S1$_{\Alg_{\ref{a.miAlg}}}$) of Algorithm~\ref{a.miAlg}, and then alternately executes step (S2$_{\Alg_{\ref{a.miAlg}}}$) until a global minimizer is found.
In (S0$_{\Alg_{\ref{a.miAlg}}}$), all nonconvex expressions are replaced by convex underestimators or piecewise-linear relaxations, such as McCormick convex envelopes, and the variables and constraints are scaled to improve numerical stability.
In (S1$_{\Alg_{\ref{a.miAlg}}}$), structural properties of the problem are exploited to increase efficiency: Couenne identifies convex, concave, and nonconvex components, exploits sparsity in constraint and Jacobian matrices, classifies variables and constraints, and decomposes complex formulations into simpler subproblems.
In (S2$_{\Alg_{\ref{a.miAlg}}}$), a spatial branch-and-bound algorithm is applied, enhanced with convex relaxations, cutting planes, range tightening, and domain-reduction heuristics to ensure global optimality.
Throughout these steps, {\tt Couenne} employs two key techniques: \emph{range reduction} and \emph{constraint propagation}.
Range reduction tightens variable bounds and reduces the search space in (S0$_{\Alg_{\ref{a.miAlg}}}$), facilitates convexity and separability detection in (S1$_{\Alg_{\ref{a.miAlg}}}$), and refines bounds dynamically at each node in (S2$_{\Alg_{\ref{a.miAlg}}}$).
Constraint propagation simplifies variable domains using the model constraints in (S0$_{\Alg_{\ref{a.miAlg}}}$), exposes structural relationships in (S1$_{\Alg_{\ref{a.miAlg}}}$), and iteratively prunes infeasible branches and strengthens relaxations in (S2$_{\Alg_{\ref{a.miAlg}}}$).
{\tt Couenne} interfaces {\tt IPOPT} for NLP relaxations and {\tt CPLEX}, {\tt Gurobi}, or {\tt GLPK} for LP and MILP subproblems~\cite{Belotti2013}.

We note that, starting from version~11, {\tt Gurobi} natively supports deterministic global MINLP solving, combining spatial branching with convex relaxations and automatic bound tightening, and should therefore also be listed among modern MINLP solvers, e.g., see \cite{GurobiCloud2025, GurobiRemote2025}.

\subsection{Theoretical Guarantees of Global Convergence}
\label{app:proofs}

For the MINLP problem \eqref{e.MINLP} defined over the mixed-integer nonlinear feasible set $C_{\mi}\subseteq \x$, classical {\tt BB} methods and their modern enhancements (cutting planes, column generation, 
and feasibility pumps) are theoretically guaranteed to converge to an $\epsilon$-global optimum under standard assumptions \cite[Ch.~3]{HorstTuy1996}, \cite[Ch.~4]{Floudas2000}, \cite[Ch.~2--3]{Pardalos2013}. 
In particular, if the feasible set is compact, the branching strategy exhaustively partitions the domain, and the bounding mechanism is valid, the {\tt BB} algorithm will terminate with an $\epsilon$-global optimal solution, defined below. 

\begin{definition}[$\epsilon$-Global Optimal Solution]
Let $f:\Rz^n \to \Rz$ be the objective function of the MINLP problem \eqref{e.MINLP} 
with feasible set $C_{\mi}\subseteq \x$. 
A feasible point $x^* \in C_{\mi}$ is called an \emph{$\epsilon$-global optimal solution} 
if its objective function value is within $\epsilon \ge 0$ of the global minimum, i.e.,
\[
    f(x^*) \le \inf_{x \in C_{\mi}} f(x) + \epsilon.
\]
If $\epsilon = 0$, the solution $x^*$ is a \emph{global optimal solution}.
\end{definition}

\begin{remark}
The above definition naturally applies to all special cases of MINLP:
\begin{itemize}
    \item \textbf{INLP case:} 
    If the index set of continuous variables is empty ($J=\emptyset$), 
    then $C_{\mi}$ reduces to the integer nonlinear feasible set $C_{\inte}$ in \eqref{e.intset}. 
    An $\epsilon$-global optimal solution $x^*\in C_{\inte}$ satisfies
    \[
        f(x^*) \le \inf_{x\in C_{\inte}} f(x) + \epsilon.
    \]

    \item \textbf{CNLP case:} 
    If the index set of integer variables is empty ($I=\emptyset$), 
    then $C_{\mi}$ reduces to the continuous nonlinear feasible set $C_{\cont}$ in \eqref{e.contSet}. 
    An $\epsilon$-global optimal solution $x^*\in C_{\cont}$ satisfies
    \[
        f(x^*) \le \inf_{x\in C_{\cont}} f(x) + \epsilon.
    \]
\end{itemize}
Thus, the $\epsilon$-global optimality concept unifies all continuous, integer, 
and mixed-integer nonlinear optimization settings under a single definition.
\end{remark}

\begin{lemma}[Global Optimality Gap]\label{lem:gap}
Let $\{L_k\}$ and $\{U_k\}$ be the lower and upper bounds computed by a {\tt BB} 
algorithm for the MINLP problem \eqref{e.MINLP}, and define
\[
U^* := \min_k U_k, \quad L^* := \max_k L_k.
\]
Then the quantity
\[
G := U^* - L^*
\]
is called the {\bf global optimality gap}. It satisfies
\[
0 \le f(x^*) - f^* \le G,
\]
where $x^*$ is the best feasible solution found (attaining $U^*$) 
and $f^* = \inf_{x \in C_{\mi}} f(x)$ is the global minimum.
\end{lemma}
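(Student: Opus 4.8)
The plan is to prove Lemma~\ref{lem:gap} directly from the definitions of the bounds $L_k$ and $U_k$ as they are produced by the {\tt BB} framework, together with the validity properties of the bounding step established in Algorithms~\ref{a.IBBAlg}--\ref{a.MIBBAlg} and the convergence background cited from \cite{HorstTuy1996,Floudas2000,Pardalos2013}. The two inequalities $0 \le f(x^*) - f^*$ and $f(x^*) - f^* \le G$ will be handled separately.

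First I would establish the left inequality. Since $x^*$ is the best feasible solution found, we have $x^* \in C_{\mi}$, so by the definition $f^* = \inf_{x \in C_{\mi}} f(x)$ it follows immediately that $f(x^*) \ge f^*$, i.e. $f(x^*) - f^* \ge 0$. This step is routine and requires only that the incumbent recorded by the algorithm is genuinely feasible, which is guaranteed by the pruning step (e.g. (S2$_{\Alg_{\ref{a.IBBAlg}}}$)), where the incumbent is updated only at integer-feasible (resp.\ feasible) points.

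Next I would prove the right inequality $f(x^*) - f^* \le G = U^* - L^*$. The key observations are: (i) $f(x^*) = U^*$, because $x^*$ is by hypothesis the feasible point attaining the best upper bound $U^* = \min_k U_k$ (each $U_k$ being an objective value of some feasible point, so $U^* \le f(x^*)$, and $U^*$ is attained by $x^*$); and (ii) $L^* \le f^*$, which is the crucial validity property of the bounding mechanism. For (ii), each $L_k$ is a valid lower bound on $\inf_{x \in \Sc_k \cap C_{\mi}} f(x)$ over the subregion $\Sc_k$, obtained by relaxing the subproblem as in \eqref{e.Lupdate_NLP}--\eqref{e.solrelcutAc}; since the active subregions $\{\Sc_k\}$ cover the feasible set $C_{\mi}$ (exhaustive partitioning), any global minimizer lies in some $\Sc_k$, whence $L_k \le f^*$ for that $k$, and taking the maximum gives $L^* = \max_k L_k \le f^*$. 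Combining, $f(x^*) - f^* = U^* - f^* \le U^* - L^* = G$.

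The main obstacle is making precise claim (ii): that $L^* \le f^*$. This rests on the interplay between the \emph{validity} of each local relaxation bound and the \emph{exhaustiveness} of the branching, and it is exactly where the cited standing assumptions (compact feasible set, exhaustive branching, valid bounding) enter. One must be careful that the maximum $\max_k L_k$ is taken over the correct index set---either all subregions ever generated, or at least all currently active ones, noting that a subregion is pruned only when $L_k > U \ge f^*$ so that pruned regions cannot contain a point better than the incumbent and hence do not lower the bound below $f^*$. Once this bookkeeping is handled, the remainder of the argument is a short chain of inequalities, and the lemma follows.
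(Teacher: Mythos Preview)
Your overall structure matches the paper's proof exactly: establish $f(x^*)\ge f^*$ from feasibility of the incumbent, then use $f(x^*)=U^*$ together with $L^*\le f^*$ to obtain $f(x^*)-f^*\le U^*-L^*=G$. The paper's proof is essentially the same two-line argument, stated more tersely.

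There is, however, a logical slip in your justification of claim~(ii). From ``the global minimizer lies in some $\Sc_{k_0}$, whence $L_{k_0}\le f^*$'' you cannot conclude $\max_k L_k\le f^*$; knowing one term of a family is bounded by $f^*$ says nothing about the maximum. Your subsequent discussion of pruning does not repair this: pruned regions have $L_k>U\ge f^*$, which would push the maximum even higher, and for active regions you only get $L_k\le U$, not $L_k\le f^*$. The inequality $L^*\le f^*$ is immediate only under the reading that each $L_k$ is itself a \emph{global} lower bound (so $L_k\le f^*$ for every $k$, and taking the maximum just selects the tightest one); the paper's proof tacitly relies on this reading when it asserts ``$L_k \le \inf_{x \in \x_k \cap C_{\mi}} f(x)$ for all $k$, which implies $L^* \le f^*$'' without further comment. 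If instead the $L_k$ are per-subregion bounds as in your covering argument, the valid global lower bound is $\min_k L_k$, not $\max_k L_k$. You correctly flag this as the main obstacle, but the resolution you sketch does not close it.
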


\begin{proof}
By the definition of $U^*$, there exists a feasible $x^* \in C_{\mi}$ such that 
$f(x^*) = U^*$. 
By the validity of the lower bounds, $L_k \le \inf_{x \in \x_k \cap C_{\mi}} f(x)$ for all $k$, 
which implies $L^* \le f^*$. 
Hence
\[
f(x^*) - f^* = U^* - f^* \le U^* - L^* = G.
\]
Nonnegativity $f(x^*) \ge f^*$ implies $f(x^*) - f^* \ge 0$. 
\end{proof}

\begin{theorem}[Global Convergence of Enhanced {\tt BB} Algorithms for MINLP]
Let $f:\Rz^n\to\Rz$ be continuous and consider the MINLP problem
\[
\min_{x \in C_{\mi}} f(x)
\]
with feasible set
\[
C_{\mi}:=\{x\in\x\mid g(x)=0, \ \ h(x)\le 0, \ \ x_i\in s_i\Zz~\mbox{for }i\in I,\, x_i\in\Rz~\mbox{for }i\in J\},
\]
where $\x$ is the simple box defined in \eqref{e.box}, and $I\cup J=[n]$.  
Suppose an enhanced {\tt BB} algorithm (branch-and-cut/price) is applied. 
Assume:

\begin{itemize}
    \item[(i)] \textbf{Bounded Feasibility:} 
    $C_{\mi}$ is nonempty and compact; $f$ is bounded below on $C_{\mi}$.
    
    \item[(ii)] \textbf{Exhaustive Partitioning:} 
    The branching process generates disjoint subregions 
    $\{\x_k\}$ such that
    \[
        \bigcup_k \x_k = \x, \quad
        \lim_{k \to \infty} \operatorname{diam}(\x_k) = 0.
    \]

    \item[(iii)] \textbf{Valid Bounding:} 
    For each $\x_k$, the algorithm computes\\
   \pt a lower bound $L_k \le \D\inf_{x \in \x_k \cap C_{\mi}} f(x)$,\\
     \pt an upper bound $U_k = f(x_k)$ for some feasible $x_k \in \x_k \cap C_{\mi}$.

    \item[(iv)] \textbf{Cutting-Plane Validity:} 
    All cuts are valid and do not remove any globally feasible solution.

    \item[(v)] \textbf{Column-Generation Correctness:} 
    Restricted master problems are solved to optimality with valid dual bounds.

    \item[(vi)] \textbf{Termination Criterion:} 
    The algorithm terminates when the global optimality gap
    \[
        G = U^* - L^* < \epsilon
    \]
    for a prescribed $\epsilon > 0$.
\end{itemize}

Then the {\tt BB} algorithm terminates in finitely many steps with an 
$\epsilon$-global optimal solution $x^*\in C_{\mi}$ satisfying
\[
f(x^*) \le \inf_{x\in C_{\mi}} f(x) + \epsilon.
\]
\end{theorem}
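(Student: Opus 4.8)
The plan is to split the argument into (i) correctness of the returned point whenever the algorithm halts and (ii) finiteness of the run, with Lemma~\ref{lem:gap} carrying the first part. First I would record the invariant that throughout the run $\{L_k\}$ remains a family of valid lower bounds and $C_{\mi}$ is never altered: by hypothesis (iv) every cut is globally valid, so no feasible point is ever removed and each relaxation value remains a lower bound; by hypothesis (v) each restricted master is solved to optimality with a valid dual bound, so the $L_k$ obtained via column generation are valid lower bounds as well. Thus the hypotheses of Lemma~\ref{lem:gap} hold at every iteration, and as soon as the termination test (vi) triggers with $G=U^*-L^*<\epsilon$, Lemma~\ref{lem:gap} yields $0\le f(x^*)-f^*\le G<\epsilon$; hence the incumbent $x^*$ attaining $U^*$ is an $\epsilon$-global optimal solution. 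This stage is essentially bookkeeping once the invariants are made precise.

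The substantive part is finite termination, which I would prove by contradiction. Suppose the algorithm runs forever. Each branching step (ii) produces finitely many children and $\x$ is compact, so the search tree is infinite and finitely branching; by K\"onig's lemma it contains an infinite path $\x_{k_1}\supseteq\x_{k_2}\supseteq\cdots$ with $\operatorname{diam}(\x_{k_j})\to0$, and by the Cantor intersection theorem $\bigcap_j\x_{k_j}=\{\bar x\}$ for some $\bar x\in\x$. If $\bar x\notin C_{\mi}$, then (since $C_{\mi}$ is closed by continuity of $g,h$ and closedness of the lattice constraints, and compact) $\delta:=\operatorname{dist}(\bar x,C_{\mi})>0$; once $\operatorname{diam}(\x_{k_j})<\delta$ we have $\x_{k_j}\cap C_{\mi}=\emptyset$, a consistent bounding procedure reports this region infeasible (equivalently $L_{k_j}=+\infty$), and it is pruned --- contradicting its presence on an infinite path. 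If $\bar x\in C_{\mi}$, then for $j$ large the upper-bounding part of (iii) returns a feasible $x_{k_j}\in\x_{k_j}\cap C_{\mi}$ with $\|x_{k_j}-\bar x\|\le\operatorname{diam}(\x_{k_j})\to0$, so $U^*\le f(x_{k_j})\to f(\bar x)$ by continuity, while $L_{k_j}\le f(\bar x)$ for every $j$; consistency of the bounding operator (the relaxation gap on a region vanishes as its diameter does) then forces $L_{k_j}\to f(\bar x)$, whence $U^*-L^*\le f(x_{k_j})-L_{k_j}\to0$ and eventually $G<\epsilon$, so the algorithm stops --- again a contradiction.

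The step I expect to be the main obstacle is the consistency property just invoked (including the limiting detection of infeasibility), since hypothesis (iii) only supplies $L_k\le\inf_{\x_k\cap C_{\mi}}f$ with no matching lower estimate. I would close this gap in one of two ways. The cleaner is to record --- or read off from the relaxation scheme --- the complementary bound $L_k\ge\inf_{x\in\x_k}f(x)-\eta(\operatorname{diam}\x_k)$ with a modulus $\eta(t)\to0$; this holds for the standard factorable/McCormick and $\alpha$BB relaxations used by {\tt BARON}, {\tt ANTIGONE}, and {\tt Couenne}, where the relaxation error is bounded by the oscillation of $f$ over the enclosing box, which tends to $0$ with the diameter by uniform continuity of $f$ on the compact $\x$. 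Alternatively I would use the {\tt MIBB} structure of Algorithm~\ref{a.MIBBAlg} directly: the integer box holds only finitely many lattice points, so {\tt IBB}'s integer branching terminates after all integer variables are fixed, and at each resulting leaf I would invoke the convergence result already established for {\tt CBB} on the induced CNLP, whose spatial branching --- again under the diameter condition --- closes the continuous gap. Either route completes the argument, with the uniform-continuity estimate of the first route doing the real work.
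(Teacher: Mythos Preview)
Your correctness-upon-termination argument is exactly the paper's: invoke the validity invariants from (iii)--(v) and apply Lemma~\ref{lem:gap} once (vi) fires to conclude $f(x^*)\le f^*+\epsilon$. Where you diverge is on finite termination: the paper simply cites \cite[Thm.~3.3.1]{HorstTuy1996} and \cite[Ch.~4,~11]{Floudas2000} and moves on, whereas you actually unpack the K\"onig/Cantor-intersection argument those references contain and, more importantly, you correctly flag that hypothesis (iii) as stated gives only $L_k\le\inf_{\x_k\cap C_{\mi}}f$ with no matching consistency estimate $L_k\to\inf_{\x_k\cap C_{\mi}}f$ as $\operatorname{diam}\x_k\to0$. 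The paper's cited theorems do require exactly this consistency (it is hypothesis (iii) of Horst--Tuy's Theorem~IV.3), so your observation is not pedantry but a genuine gap in the theorem statement that the paper's proof silently inherits from the references. Your two proposed closures --- a modulus-of-relaxation bound from uniform continuity on the compact box, or reduction to finitely many integer leaves plus {\tt CBB} on each --- are both standard and sound; the first is precisely how the cited books handle it. In short, your plan is correct and strictly more complete than the paper's own proof.
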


\begin{proof}
By (i)--(ii), the feasible set is compact and the branch diameters tend to zero. 
By (iii)--(v), the sequences $\{L^*\}$ and $\{U^*\}$ are valid and monotone, 
and no feasible solution is ever discarded. 
By Lemma \ref{lem:gap}, the global optimality gap satisfies
\[
0 \le f(x^*) - f^* \le U^* - L^* = G.
\]
Once $G < \epsilon$, i.e., (vi) holds, we have $f(x^*) \le f^* + \epsilon$, so $x^*$ is 
an $\epsilon$-global optimal solution. 
Finite termination follows from standard {\tt BB} theory 
\cite[Thm.~3.3.1]{HorstTuy1996} and \cite[Ch.~4, 11]{Floudas2000}.
\end{proof}

\paragraph{Analytical remark.}
The convergence results summarized above restate the classical theory of branch-and-bound~\cite{HorstTuy1996,Floudas2000}.  
In this work, they are explicitly extended to encompass data-driven bounding and branching decisions,  
thereby connecting deterministic convergence theory with learning-augmented optimization.  
This unified interpretation provides the analytical synthesis linking classical heuristics  
and learned policies under the same $(\mathcal{S},\mathcal{A},\pi_\theta)$ abstraction.

\subsection{Performance of Commercial Solvers on Standard MINLPs}\label{app:tables}

Table~\ref{t.discreteopt} presents the performance of several commercial MINLP solvers on standard benchmark problems, including crew scheduling, knapsack, vehicle routing, facility location, energy grid optimization, and hydropower scheduling. 

These problems are categorized by problem size, where \textbf{small} refers to instances with fewer than 100 variables, \textbf{medium} refers to 100--1000 variables, and \textbf{large} refers to more than 1000 variables. 
The table highlights the compatibility of problems with general-purpose MINLP solvers such as {\tt BARON}, {\tt ANTIGONE}, {\tt LINDO}, {\tt Couenne}, and {\tt KNITRO}; recent advances also include machine-learning-enhanced frameworks (e.g., the GCN-based probing control in {\tt BARON}) and the deterministic global MINLP capability in {\tt Gurobi} version~11 and later.

While these solvers can handle integer variables and nonlinear constraints, they are not specifically tailored to guarantee exact solutions for purely discrete problems. Instead, they are often used when modelling flexibility or integration of nonlinear features is required.

These benchmark categories are widely used in comparative MINLP studies such as MINLPLib~2 and QPLIB, allowing consistent assessment of solver scalability and robustness.

Among these solvers, {\tt BARON} and {\tt ANTIGONE} typically achieve the most reliable global results on medium- and large-scale problems, while {\tt Couenne} provides a strong open-source alternative for research and smaller instances. 
{\tt LINDO} and {\tt KNITRO} are well suited for local or convex MINLPs and provide efficient convergence for large smooth problems.

\begin{table}[!htbp]
\begin{center}
\scalebox{0.9}{
\begin{tabular}{|lllll|}
\hline
\multicolumn{1}{|l}{{\bf problem}} & \multicolumn{3}{c}{{\bf size}} & \multicolumn{1}{l|}{{\bf software}} \\
 & \rot{{\bf small}} & \rot{{\bf medium}} & \rot{{\bf large}} &  \\
\hline
CSP (Crew Scheduling) & $+$ & $+$ & $+$ & \begin{tabular}{l}{\tt ANTIGONE}, {\tt BARON}, {\tt KNITRO} \end{tabular} \\
\hline
KP (Knapsack Problem) & $+$ & $+$ & $+$ & \begin{tabular}{l}{\tt BARON}, {\tt Couenne}, {\tt KNITRO} \end{tabular} \\
\hline
VRP (Vehicle Routing) & $+$ & $+$ & $\pm$ & \begin{tabular}{l}{\tt BARON}, {\tt ANTIGONE} \end{tabular} \\
\hline
FLP (Facility Location) & $+$ & $+$ & $\pm$ & \begin{tabular}{l}{\tt BARON}, {\tt ANTIGONE}, {\tt Couenne} \end{tabular} \\
\hline
EGO (Energy Grid Optimization) & $+$ & $+$ & $\pm$ & \begin{tabular}{l} {\tt BARON}, {\tt ANTIGONE}, {\tt LINDO}, \\
{\tt Gurobi} \end{tabular} \\
\hline
HS (Hydropower Scheduling) & $+$ & $+$ & $\pm$ & \begin{tabular}{l}{\tt ANTIGONE}, {\tt BARON}, {\tt LINDO}, \\
{\tt KNITRO}, {\tt Gurobi}\end{tabular} \\
\hline
\end{tabular}}
\end{center}
\caption{Performance of selected commercial MINLP solvers on standard benchmark problems. 
The size categories are defined as small: 1--100 variables, medium: 100--1000 variables, and large: $>1000$ variables. 
While these solvers handle integer variables and nonlinear constraints, they do not guarantee exact solutions for purely discrete problems. 
Recent advances, however, have introduced learning-guided domain-reduction strategies (e.g., {\tt BARON~11}) and deterministic global MINLP extensions in {\tt Gurobi~11+}.}
\label{t.discreteopt}
\end{table}

\subsection{Recommendation and Conclusion }

Table~\ref{t.metaheu} provides a comparative overview of several well-known solvers for MINLP problems, with a focus on their scalability, exactness, and support for high-performance computing (HPC). The entries indicate qualitative assessments over problem sizes (small, medium, large), whether the solver guarantees global optimality (“Exact?”), and whether parallel or distributed computing is supported (“HPC?”). Solvers such as \texttt{BARON} and \texttt{ANTIGONE} are designed for global optimization and can certify optimality for nonconvex MINLPs, thus marked as exact. \texttt{KNITRO}, in contrast, is a local solver and does not provide global optimality guarantees, though it remains effective for local nonlinear problems. \texttt{Couenne} provides deterministic global optimization for nonconvex MINLPs via convex relaxations but is exact only within convex subproblems. The classification reflects practical trade-offs between solver robustness, computational cost, and algorithmic guarantees, helping to guide solver selection based on application-specific needs.

\begin{table}[!htpp]
\begin{center}
\scalebox{0.8}{\begin{tabular}{|l|llllll|}
\hline
\multicolumn{1}{|l|}{{\bf solver}} & \multicolumn{3}{c}{{\bf problem size}} & \multicolumn{1}{l}{{\bf exact?}} & \multicolumn{1}{l}{{\bf HPC?}} &  \multicolumn{1}{l|}{{\bf software/libraries/references}} \\
  &  \rot{{\bf small}} & \rot{{\bf medium}} & \rot{{\bf large}} & & & \\
\hline 
\texttt{BARON} & $+$ & $+$ & $\pm$ & $+$ & $\pm$ &
\begin{tabular}{l}
Global nonconvex MINLP solver with\\
ML-guided probing \\
\texttt{https://minlp.com/baron} \\\cite{BARON,BARON2025learning}
\end{tabular} \\
\hline 
\texttt{ANTIGONE} & $+$ & $+$ & $\pm$ & $+$ & $\pm$ &
\begin{tabular}{l}
Global optimization for MINLP \\
\texttt{https://antigone.aimms.com} \\
\cite{Misener2014}
\end{tabular} \\
\hline 
\texttt{KNITRO} & $+$ & $+$ & $\pm$ & $-$ & $\pm$ &
\begin{tabular}{l}
Local NLP/MINLP solver \\
\texttt{https://www.artelys.com/knitro/} \\
\cite{Byrd2006}
\end{tabular} \\
\hline 
\texttt{LINDO} & $+$ & $+$ & $\pm$ & $+$ & $\pm$ &
\begin{tabular}{l}
Global MINLP solver with parallel support \\
\texttt{https://www.lindo.com} \\
\cite{cunningham2004lingo}
\end{tabular} \\
\hline 
\texttt{Couenne} & $+$ & $+$ & $\pm$ & $+$* & $\pm$ &
\begin{tabular}{l}
Open-source global MINLP solver \\
(spatial {\tt BB} with convex relaxations) \\
\texttt{https://coin-or.github.io/Couenne/} \\
\cite{Belotti2013}
\end{tabular} \\
\hline
\end{tabular}}
\end{center}
\caption{Classification of MINLP solvers. “Exact?” refers to the ability to certify global optimality. \texttt{Couenne} is exact only within convex subproblems (*), providing deterministic global optimization via convex relaxations. “HPC?” indicates support for parallel or distributed computing.}
\label{t.metaheu}
\end{table}

The classical exact methods presented in this section form the algorithmic backbone upon which learning-based enhancements operate. 
In Sections~\ref{sec:ML}--\ref{sec:RL}, we revisit these steps—bounding, branching, node selection, and pruning—through the lens of data-driven decision-making, where machine learning models learn to approximate or refine these operations while preserving the exactness guarantees established here.

Recent examples include the learning-guided probing policy in {\tt BARON}~\cite{BARON2025learning}, demonstrating how data-driven components can accelerate exact global frameworks.

\clearpage

\paragraph*{Acknowledgements}
This work received funding from the National Centre for Energy II (TN02000025).

\end{sloppypar}
\bibliographystyle{plain}
\bibliography{refs}
\end{document}